\newtheorem{thm}{Theorem}[section]
\newtheorem{prop}[thm]{Proposition}
\newtheorem{lem}[thm]{Lemma}
\newtheorem{corol}[thm]{Corollary}
\theoremstyle{definition}
\newtheorem{defin}[thm]{Definition}
\newtheorem{exa}[thm]{Example}
\newcommand{\forma}[1]{\langle #1 \rangle}
\newcommand{\norma}[1]{\Vert #1 \Vert}
\newcommand{\n}{\nabla}
\newcommand{\np}{\nabla^\perp}
\newcommand{\h}{\overrightarrow{H}}
\newcommand{\on}{\overline{\n}}
\title[]{Harmonic Gauss maps of submanifolds of arbitrary codimension of the Euclidean space and sphere and some applications}
\author{Daniel Bustos}
\address{Departamento de matem\'aticas y estad\'istica, Universidad del Tolima, Ibagué, Colombia}
\email{danielfbustosrios@gmail.com}
\author{Jaime B. Ripoll}
\address{Instituto de matem\'atica e estad\'istica, Universidade Federal do Rio Grande do Sul, Porto Alegre, Brazil}
\email{jaime.ripoll@ufrgs.br}
\begin{document}

\allowdisplaybreaks

\begin{abstract}
It is  proved results about existence and non existence of unit normal sections of submanifolds of the  Euclidean space and sphere which associated Gauss maps are harmonic. Some applications to constant mean curvature hypersurfaces of the sphere and to isoparametric submanifolds are obtained too.
\end{abstract}

\maketitle

\section{Introduction}
 
A well known theorem of Ruh-Vilms \cite{RV} establishes that the Grassmanian Gauss map 
of an orientable submanifold $M^n$ of $\mathbb{R}^{m},$ $1\leq n \leq m-1,$ is harmonic if and only
if the mean curvature vector of $M$ is parallel in the normal connection. In
codimension one this result is equivalent to say that the Gauss map
\begin{equation}
\begin{array}{rcl}
\gamma_{\eta}:M &\rightarrow & \mathbb{S}^{m-1}\\p &\longmapsto &\eta\left(  p\right)
\end{array}
\end{equation}
associated to a unit normal section $\eta$ of $M$ is harmonic if and only if
$M$ has constant mean curvature (CMC).  This relation in the codimension one case provides a strong tool to the study of CMC hypersurfaces of the Euclidean space, in particular their Gauss image, a classical topic of research in Differential Geometry (see, for instance,   \cite {H}). In this paper we try to extend the codimension
$1$ case of Ruh-Vilms theorem to submanifolds $M$ of arbitrary codimension
by finding geometric conditions on $M$ that guarantee the existence of a
unit normal section $\eta$ of $M$ such that the associated Gauss map
$\gamma_{\eta}:M\rightarrow\mathbb{S}^{m-1}$ is harmonic. 

Considering that the space of unit normal sections has locally higher dimension in codimension bigger than or equal to $2,$ we should expect the existence of unit normal sections with harmonic Gauss maps under usual geometric assumptions as minimality or parallelism of the mean curvature vector. However, this first intuition is not true in general. Indeed, in Theorem~\ref{Classification} below we prove that the existence of a unit normal section of a surface of $\mathbb{R}^4$  determining a harmonic Gauss map yields strong restrictions on the geometry of the surface. The  result is local.

\begin{thm}
\label{Classification} Let $M$ be a surface of $\mathbb{R}^{4}$ with
parallel mean curvature vector field such that the second fundamental form of $M$ spans the normal space of $M$ in $\mathbb{R}^{4}$ at each point. Then  any unit normal section of $M$ in $\mathbb{R}^4$ can be written as $\eta=a\nu+b\mu,$ where $\nu$ is a unit normal section of $M$ in $\mathbb{R}^4$ tangent to the unit sphere $\mathbb{S}^3,$ $\mu$ is a unit normal section of $M$ orthogonal to $\nu$ and $a,b$ are functions on $M$ with $a^{2}+b^{2}=1.$  And, up to isometries of $\mathbb{R}^4$ 
\begin{enumerate}
\item\label{abconst}  $\gamma_\eta$ is a harmonic map and  $a, b$ are constants if and only if one of the following alternatives is satisfied:
\begin{enumerate}
\item\label{class1a} $M$ is an open subset of the
product of circles $$\mathbb{S}^{1}(r)\times \mathbb{S}^{1}(\sqrt{1-r^2})\subset \mathbb{S}^{3},$$with $0<r<1.$

\item\label{class1b} $M$ is an open subset of a totally umbilical non geodesic surface of $\mathbb{S}^3.$

\item\label{class1c} $M$ is a minimal surface of $%
\mathbb{S}^{3}$ and up to a reorientation of $M$ and up to a composition of $M$ with the antipodal map, $\eta$ is the position vector of $M$ in $\mathbb{R}^4$ or $\eta$ is a unit normal section of $M$ in $\mathbb{S}^3.$
\end{enumerate}

\item\label{abno} If $\gamma_\eta$ is a harmonic map and  $a, b$ are not constants then  $M$ is a CMC surface of  $\mathbb{S}^3$ with a  non constant principal curvature  which is constant along a principal direction.
\end{enumerate}
\end{thm}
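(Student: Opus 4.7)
The plan has three stages: first reduce, via the structural hypotheses, to the situation in which $M$ lies in the standard $\mathbb{S}^3\subset\mathbb{R}^4$; then compute the tension field of $\gamma_\eta$ in an adapted frame; and finally carry out the classification by case analysis.

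\textbf{Reduction and adapted frame.} The parallel mean curvature hypothesis combined with the condition that the second fundamental form spans the rank-$2$ normal bundle should force $M$, up to a rigid motion of $\mathbb{R}^4$, to lie in a $3$-sphere, which after identification we take to be $\mathbb{S}^3$. When $\vec H\neq 0$, the unit section $\vec H/|\vec H|$ is parallel in the normal connection; the Ricci and Codazzi equations together with the span condition then make the orthogonal unit normal section $\mu$ parallel with shape operator $A_\mu$ equal to a nonzero multiple of the identity, identifying $\mu$ (up to translation) with a position vector. Setting $\nu$ to be the unit normal of $M$ in $\mathbb{S}^3$ (tangent to $\mathbb{S}^3$) gives the global decomposition $\eta = a\nu + b\mu$ with $a^2+b^2=1$.

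\textbf{Tension field.} With $M\subset\mathbb{S}^3$ and $\mu$ the position vector, one has $A_\mu = I$, $\nabla^\perp\mu = 0$, and $\nabla^\perp\nu = 0$. Using the Weingarten formula $\overline{\nabla}_X\eta = -A_\eta X + \nabla^\perp_X\eta$ together with the identity $\tau(\gamma_\eta) = \Delta_M\eta - \langle\Delta_M\eta,\eta\rangle\eta$ for the spherical target, I would expand $\Delta_M\eta$ for $\eta = a\nu+b\mu$ and project onto tangential, $\nu$-, and $\mu$-components. This should yield three scalar equations: a tangential one coupling $\nabla a,\nabla b$ with $A_\nu$ and $\mathrm{grad}(H_\nu)$ (where $H_\nu=\mathrm{tr}\,A_\nu$), and two normal ones relating $\Delta_M a$, $\Delta_M b$ to $|A_\nu|^2$ and to $a,b$.

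\textbf{Case analysis.} For part~(1) with $a,b$ constant, the tangential equation reduces to $a\,\mathrm{grad}(H_\nu)=0$ and the normal equations further force the principal curvatures of $A_\nu$ to be constant. The subcases $a=0$ (so $\eta=\pm\mu$) and $b=0$ (so $\eta=\pm\nu$) each reduce to minimality of $M$ in $\mathbb{S}^3$---the former because the inclusion $M\hookrightarrow\mathbb{S}^3$ is harmonic iff $M$ is minimal in $\mathbb{S}^3$, the latter by direct calculation---yielding case~(1c). When $a,b\neq 0$ are both constant, $M$ is isoparametric in $\mathbb{S}^3$, hence by the classification of such surfaces either totally umbilical non-geodesic (case~(1b)) or a Clifford torus $\mathbb{S}^1(r)\times\mathbb{S}^1(\sqrt{1-r^2})$ (case~(1a)). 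For part~(2), non-constancy of $a,b$ makes the tangential equation nontrivial and forces $\nabla a$ to be aligned with a principal direction $e$ of $A_\nu$; the normal equations then translate into constancy of the corresponding principal curvature along $e$, while $M$ is CMC in $\mathbb{S}^3$ by the parallel mean curvature hypothesis. The main obstacle is the careful tension-field computation and the clean isolation of the three scalar equations in Step~2; once these are in place, the subsequent classification rests on standard isoparametric and CMC results in $\mathbb{S}^3$.
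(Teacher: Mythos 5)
Your reduction step contains the central gap. You assert that parallel mean curvature together with the span condition force $M$, up to rigid motion, into a round $\mathbb{S}^3$, and you only discuss the case $\overrightarrow{H}\neq 0$. But the structural hypotheses do not exclude $\overrightarrow{H}=0$: a substantial minimal surface of $\mathbb{R}^4$ (e.g.\ a generic holomorphic curve in $\mathbb{C}^2\cong\mathbb{R}^4$) has parallel (zero) mean curvature vector and second fundamental form spanning the normal space, yet lies in no $3$-sphere. Excluding this case is the first and most delicate part of the proof, and it cannot be done without using the harmonicity of $\gamma_\eta$. The paper's route: in a frame $\{e_1,e_2\}$ diagonalizing $S_\nu$ (with $\nu$ a unit normal orthogonal to $\eta$), vanishing of the tangential part of $\Delta\gamma_\eta$ forces $\langle\nabla^\perp_{e_i}\eta,\nu\rangle\,\langle S_\nu(e_i),e_i\rangle=0$; if $M$ were minimal, the span condition makes the diagonal entries of the trace-free $S_\nu$ nonzero, so $\eta$ would be parallel, hence (codimension $2$) the normal bundle would be flat, and a minimal surface with flat normal bundle reduces codimension, contradicting the span hypothesis. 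This step is also precisely what yields Corollary 1.2; omitting it leaves both the decomposition $\eta=a\nu+b\mu$ and the entire case analysis unfounded. Only after minimality is excluded can one invoke the Chen--Yau classification of surfaces with parallel mean curvature vector, which (together with the span condition ruling out the $\mathbb{R}^3$ alternative) is what places $M$ in a $3$-sphere; your sketch of making $\mu$ parallel with $A_\mu$ a multiple of the identity is essentially a re-derivation of a piece of that theorem and would need to be carried out in full or cited.

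The later steps are in the right spirit but under-specified where the real work lies. In part (2) the tangential equation gives, for each $i$, the product condition $\langle\nabla^\perp_{e_i}\eta,\eta^\perp\rangle\,\langle S_{\eta^\perp}(e_i),e_i\rangle=0$ with $\eta^\perp=-b\nu+a\mu$, and the argument is a dichotomy: both derivative factors cannot vanish (else $a,b$ would be constant), and both diagonal entries of $S_{\eta^\perp}$ cannot vanish (by the span condition), so the two conditions split between the two indices. Then $\langle S_{\eta^\perp}(e_j),e_j\rangle=0$ gives $a=\lambda_j b$, and $\langle\nabla^\perp_{e_i}\eta,\eta^\perp\rangle=0$ gives $e_i(\lambda_j)=0$. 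Your phrase ``forces $\nabla a$ to be aligned with a principal direction'' does not capture this splitting, and without the relation $a=\lambda_j b$ you cannot identify the non-constant principal curvature as the one that is constant along a principal direction. You also leave the converse (``if'') direction of part (1) unaddressed, though that follows quickly from the harmonicity criterion for parallel unit normal sections of hypersurfaces of $\mathbb{S}^3$.
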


Concerning alternative \eqref{abno} of Theorem \ref{Classification}, CMC surfaces invariant by a one parameter subgroup of isometries of $\mathbb{S}^3,$ apart spheres and tori, have a non constant principal curvature  which is constant along a principal direction (see \cite{HL} for the minimal case). We do not know if the these surfaces admit a unit normal section which Gauss map is harmonic.

As a direct consequence of  Theorem \ref{Classification} it follows a  non existence result of harmonic Gauss maps.

\begin{corol}\label{cornonh}
Let $M$ be a minimal surface of $\mathbb{R}^4$ such that the second fundamental form of  $M$ spans the normal space of $M$ at each point. Then for any unit normal section $\eta$ the Gauss map 
\begin{eqnarray*}
\gamma_\eta: M &\to & \mathbb{S}^3\\
p &\mapsto & \eta(p)
\end{eqnarray*}
is not a harmonic map.
\end{corol}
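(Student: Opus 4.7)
The strategy I would use is proof by contradiction. Suppose $\eta$ is a unit normal section of $M$ for which $\gamma_\eta : M \to \mathbb{S}^3$ is harmonic. Since $M$ is minimal, the mean curvature vector $\h$ is identically zero and hence trivially parallel in the normal connection, and the remaining assumption on the span of the second fundamental form is given by hypothesis. Theorem \ref{Classification} therefore applies, and up to an isometry of $\mathbb{R}^4$ the surface $M$ must fall into one of the alternatives (\ref{class1a})--(\ref{class1c}) of part (\ref{abconst}) or into the alternative of part (\ref{abno}).

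The crucial observation is that every one of these alternatives places $M$ inside the unit sphere $\mathbb{S}^3\subset\mathbb{R}^4$: the Clifford-type product $\mathbb{S}^1(r)\times\mathbb{S}^1(\sqrt{1-r^2})$, the totally umbilical non-geodesic surfaces of $\mathbb{S}^3$, the minimal surfaces of $\mathbb{S}^3$, and the CMC surface of $\mathbb{S}^3$ appearing in (\ref{abno}) are all contained in $\mathbb{S}^3$. Hence the corollary reduces to the elementary assertion that \emph{no surface of $\mathbb{S}^3$ can be minimal as a surface of $\mathbb{R}^4$}.

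To verify the latter, I would decompose the second fundamental form of $M\subset\mathbb{S}^3\subset\mathbb{R}^4$ along the tower of inclusions. Splitting each second fundamental form value into its component tangent to $\mathbb{S}^3$ and its component along the position vector $p$, and tracing over an orthonormal tangent frame, yields
\begin{equation*}
\h_{M}^{\mathbb{R}^4} \;=\; \h_{M}^{\mathbb{S}^3} - p.
\end{equation*}
Because $-p$ is orthogonal to $\mathbb{S}^3$ while $\h_{M}^{\mathbb{S}^3}$ is tangent to $\mathbb{S}^3$, vanishing of $\h_{M}^{\mathbb{R}^4}$ would force $p=0$, which is absurd. Since rigid motions of $\mathbb{R}^4$ preserve both minimality and containment in a unit sphere (up to a change of center), the contradiction propagates back to the original $M$.

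I do not anticipate any genuine obstacle: the argument is essentially a bookkeeping exercise on the list produced by Theorem \ref{Classification} combined with the standard formula for the mean curvature vector of a submanifold of $\mathbb{S}^3$. The only point worth double-checking is that the phrase ``up to isometries of $\mathbb{R}^4$'' in Theorem \ref{Classification} is indeed compatible with the minimality hypothesis, which it is, as isometries preserve the mean curvature vector up to their differential.
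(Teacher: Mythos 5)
Your argument is correct, but it reaches the conclusion by a route different from the one the paper has in mind. You treat Theorem~\ref{Classification} as a black box: every alternative in its conclusion places $M$, up to a rigid motion, inside a unit sphere, and for a surface of $\mathbb{S}^3\subset\mathbb{R}^4$ one has $\overrightarrow{H}_{M}^{\mathbb{R}^4}=\overrightarrow{H}_{M}^{\mathbb{S}^3}-p\neq 0$, so minimality in $\mathbb{R}^4$ is impossible; this deduction is sound (with the understanding, which you adopt and which the proof of the theorem makes explicit, that the theorem's preamble is conditional on the existence of a unit normal section with harmonic Gauss map). The paper's own mechanism is more economical and does not pass through the Chen--Yau classification of surfaces with parallel mean curvature vector that underlies the statement of Theorem~\ref{Classification}: the first step of the proof of that theorem shows that harmonicity of $\gamma_\eta$ annihilates the tangential part of $\Delta\gamma_\eta$, which combined with ${\rm tr}\,S_\nu=0$ and the spanning hypothesis yields $\forma{\np_{e_i}\eta,\nu}\forma{S_\nu(e_i),e_i}=0$ with $\forma{S_\nu(e_i),e_i}\neq 0$, so $\eta$ is parallel in the normal connection; in codimension two this forces the normal bundle to be flat, and a minimal surface of $\mathbb{R}^4$ with flat normal bundle reduces codimension to $\mathbb{R}^3$, contradicting the spanning hypothesis. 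The corollary is thus the contrapositive of that first claim. Your version is perfectly valid but leans on the heavier classification result; the paper's implicit argument isolates exactly the part of the proof that needs no classification.
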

 We also get an analogous result of Corollary \ref{cornonh} of non existence harmonic Gauss maps for minimal surfaces of $\mathbb{S}^4.$

\begin{thm}\label{nhS4}
Let $M$ be a minimal surface of $\mathbb{S}^4\subset \mathbb{R}^5$ such that the second fundamental form of $M$ spans the normal space of $M$ in $\mathbb{S}^4$ at each point. Then, for any unit normal section $\eta$ of $M$ in $\mathbb{S}^4$ the Gauss map  
\begin{eqnarray*}
\gamma_\eta:M &\to & \mathbb{S}^4\\
p&\mapsto & \eta(p)
\end{eqnarray*} 
is not a harmonic map.
\end{thm}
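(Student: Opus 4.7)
The plan is to translate the harmonicity of $\gamma_\eta$ into a PDE for $\eta$ regarded as an $\mathbb{R}^5$-valued map, decompose that PDE along the tangent, position, and $\mathbb{S}^4$-normal directions of $M$, and then extract a contradiction from the spanning hypothesis via the Codazzi and Ricci equations. Concretely, $\gamma_\eta:M\to\mathbb{S}^4\subset\mathbb{R}^5$ is harmonic if and only if $\Delta\eta=-|d\gamma_\eta|^2\eta$, where $\Delta$ denotes the $\mathbb{R}^5$-valued Laplacian of $M$. Since $\eta$ is tangent to $\mathbb{S}^4$ one has $\eta\perp p$, and the normal connection of $M$ in $\mathbb{R}^5$, restricted to sections of the $\mathbb{S}^4$-normal bundle, coincides with its $\mathbb{S}^4$-counterpart. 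A Weingarten expansion at a point where $\nabla^M e_i=0$ would then yield
\[
\Delta\eta = -\sum_i(\nabla_{e_i}A)(e_i,\eta)\;-\;2\sum_i A_{\nabla^\perp_{e_i}\eta}\,e_i\;-\;\sum_i \alpha(e_i,A_\eta e_i)\;+\;\sum_i \nabla^\perp_{e_i}\nabla^\perp_{e_i}\eta,
\]
with $A$, $\alpha$ and $\nabla^\perp$ understood in $\mathbb{S}^4$; the $p$-component drops out because $\mathrm{tr}\,A_\eta=2\langle \overrightarrow{H},\eta\rangle=0$ by minimality.

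I would next equate components of $\Delta\eta$ and $-|d\gamma_\eta|^2\eta$. An application of the Codazzi equation in $\mathbb{S}^4$ would show that $\sum_i(\nabla_{e_i}A)(e_i,\eta)$ is twice the tangent vector dual to $X\mapsto\langle\nabla^\perp_X\overrightarrow{H},\eta\rangle$ and therefore vanishes under minimality. The tangent part of harmonicity would then reduce to $\sum_i A_{\nabla^\perp_{e_i}\eta}\,e_i=0$. Choosing a local unit normal section $\mu$ of $M$ in $\mathbb{S}^4$ orthogonal to $\eta$ and writing $\nabla^\perp_{e_i}\eta=t_i\mu$, this becomes $A_\mu T=0$ with $T=t_1e_1+t_2e_2$. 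Because the spanning hypothesis makes the map $\nu\mapsto A_\nu$ injective, $A_\mu\neq 0$; and as a nonzero traceless symmetric $2\times 2$ operator $A_\mu$ has strictly negative determinant and is invertible, forcing $T=0$. Thus $\eta$ is parallel in the $\mathbb{S}^4$-normal bundle, and differentiating $\langle\eta,\mu\rangle=0$ and $|\mu|=1$ would force $\mu$ to be parallel as well.

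With $\eta$ and $\mu$ both parallel, the Ricci equation in $\mathbb{S}^4$ would give $\langle[A_\eta,A_\mu]X,Y\rangle=\langle R^\perp(X,Y)\eta,\mu\rangle=0$, so $[A_\eta,A_\mu]=0$. The commuting traceless symmetric pair can then be simultaneously diagonalized, $A_\eta=\mathrm{diag}(a,-a)$ and $A_\mu=\mathrm{diag}(b,-b)$, in a common orthonormal basis of $T_pM$. Using $\nabla^\perp\eta\equiv 0$, the normal part of the harmonicity equation would collapse to $\sum_i\alpha(e_i,A_\eta e_i)=|A_\eta|^2\eta$, whose $\mu$-component is $\mathrm{tr}(A_\eta A_\mu)=2ab=0$. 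The spanning hypothesis rules out both $a=0$ (which would mean $A_\eta=0$) and $b=0$ (which would mean $A_\mu=0$), producing the desired contradiction.

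The step I expect to be most delicate is the tangent-component analysis leading to the parallelism of $\eta$: it rests on the careful use of Codazzi in constant curvature and on tracking exactly which shape operators and normal connections in $\mathbb{R}^5$ coincide with those in $\mathbb{S}^4$. Once the parallelism of the full normal frame is established, the endgame via the Ricci equation and the spanning hypothesis is a clean structural argument, in analogy with Corollary~\ref{cornonh}.
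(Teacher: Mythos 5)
Your proposal is correct, and its central step coincides with the paper's: harmonicity of $\gamma_\eta$ forces the tangential part of $\Delta\gamma_\eta$ to vanish, which (after observing that the $\mathbb{R}^5$- and $\mathbb{S}^4$-normal connections agree on $\eta$ and that the relevant terms drop out by minimality) reduces to $S_\mu T=0$ with $T=\sum_i\langle\nabla^\perp_{e_i}\eta,\mu\rangle e_i$; since $S_\mu$ is a nonzero traceless symmetric operator on a $2$-dimensional space it is invertible, so $\eta$ is parallel. This is exactly the paper's computation (phrased there via the term ${\rm tr}\bigl(S_{\np_{(\cdot)}\eta}(e_i)\bigr)$ in Lemma~\ref{lemma1} and a basis diagonalizing $S_\nu$). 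Where you diverge is the endgame: the paper, having obtained a parallel unit normal section of a minimal surface, invokes the codimension-reduction argument sketched in the introduction (Ricci equation $\Rightarrow$ flat normal bundle $\Rightarrow$ one-dimensional first normal space), contradicting the spanning hypothesis without using harmonicity again. You instead keep using harmonicity: with $\eta$ and $\mu$ both parallel, the Ricci equation gives $[A_\eta,A_\mu]=0$, and the $\mu$-component of the normal part of the harmonicity equation gives ${\rm tr}(A_\eta A_\mu)=2ab=0$, contradicting spanning. Both closings are valid; yours is more self-contained (no appeal to reduction-of-codimension theory), while the paper's makes transparent that harmonicity is only needed to produce the parallel section, after which the obstruction is purely the classical rigidity of minimal surfaces admitting parallel normal directions. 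One minor bookkeeping point: in your Weingarten expansion the identity $\sum_i(\nabla_{e_i}A)(e_i,\eta)=2\,{\rm grad}\forma{\h,\eta}^{\sharp}$-type term vanishes here simply because $\h=0$ in $\mathbb{S}^4$, so the Codazzi step, while correct, is not where any delicacy lies.
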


It seems that what is behind Ruh-Vilms result is a fact which is trivial in codimension 1, namely: unit normal sections are parallel in the normal connection. To bring up this relation in arbitrary codimension, we recall some preliminary facts which are introduced in a more general Riemannian setting for later use.

Let $M$ be a manifold immersed in a Rie\-mannian manifold $N.$ Since there is no possibility of confusion in our context, we identify, as usual, $M$ with its image on $N$ by the immersion. Denote by $S(M)$ the vector bundle over $M$ of symmetric linear transformations (s.l.t), that is, 
 $$S(M) = \{(p,T)\mid p\in M\text{ and } T : T_pM\to T_p M\text{ is a s.l.t}\}.$$
Let $\mathcal{N}(M)$ and  $\mathcal{S}(M)$   be the vector bundles over $M$ of the sections of the normal bundle $TM^\bot$ and the sections of $S(M),$ respectively. 

We denote by $\tilde{\mathcal{B}}:=\mathcal{B}^\ast \mathcal{B}$ the  Simons operator \cite{JS}, a section of the vector bundle ${\rm Hom}(T M^\perp, T M^\perp),$  where 
$ \mathcal{B}:\mathcal{N}(M)\to  \mathcal{S}(M)$ is the vector bundle  homomorphism
 $\mathcal B(\eta)=S_\eta, $ $S_\eta$ is the second fundamental form associated to $\eta,$ and $\mathcal{B}^\ast: \mathcal{S}(M)\to \mathcal{N}(M)$  is the fiber-wise adjoint map of $\mathcal B$ (considering in  $\mathcal{S}(M)$  the  Hilbert-Schmidt metric. See Section \ref{P} ahead for more details). 
 
Recall that a map from a manifold $M$ to $\mathbb{S}^{m-1}$
is harmonic, in the case that $M$ is compact, if it is a critical point of the functional $\mathcal{H}:C^{\infty
}\left(  M,\mathbb{S}^{m-1}\right)  \rightarrow\mathbb{R}$
\[
\mathcal{H}(f)=\int_{M}\norma{df} ^{2},\text{ }f\in C^{\infty
}\left(  M,\mathbb{S}^{m-1}\right).  
\]
If $M$ is only complete, a map is harmonic if it is a critical point of $\mathcal H$ restricted to compact subdomains of $M.$

Denote by $\mathcal{N}_{1}\left(  M\right)  $ the  subbundle  of unit normal
sections of $\mathcal{N}\left(  M\right).$ In the case that $M$ is compact, we say that a unit normal section
is harmonic if it is a critical point of the functional $\mathcal{N}%
:\mathcal{N}_{1}\left(  M\right)  \rightarrow\mathbb{R}$
\[
\mathcal{N}(\eta)=\int_{M}\norma{\nabla\eta} ^{2},\text{ }\eta
\in\mathcal{N}_{1}\left(  M\right),  
\]
where $\n$ is the Riemannian connection of $N.$ In the case that  $M$ is only complete, a unit normal section
is harmonic if it is a  critical point of $\mathcal N$ restricted to compact subdomains of $M.$  

Finally, recalling that a normal section $\eta$ of $M$ is parallel (in the normal conection) if $\left (\nabla_E\eta\right )^\bot=0,$ where $E$ is any tangent vector field of $M$ and $\bot$ is the orthogonal projection of $TN$ on $TM^\bot,$ we prove:

\begin{thm}\label{trn} Let $M$ be a smooth manifold immersed in $\mathbb{R}^{m}$ with
parallel mean curvature vector. Let $\eta$ be a parallel unit normal section
of $M.$ Then the following alternatives are equivalent:
\begin{enumerate}
\item\label{trn1} $\gamma_{\eta}\in C^{\infty}\left(  M,\mathbb{S}^{m-1}\right)  $ is harmonic.

\item\label{trn2} $\eta$ is a harmonic section.

\item\label{trn3} $\eta$ is an eigenvector of the Simons operator $\tilde{\mathcal{B}}.$ 
\end{enumerate}
\end{thm}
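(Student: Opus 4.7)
The plan is to derive all three equivalences from a single identity: under the two standing hypotheses ($\h$ parallel and $\eta$ parallel),
\begin{equation*}
\Delta\eta = -\tilde{\mathcal{B}}(\eta),
\end{equation*}
where $\Delta$ is the standard Laplacian of $\eta$ viewed as an $\mathbb{R}^m$-valued map on $M$. To establish this, use the Weingarten formula and $\np\eta = 0$ to obtain $\on_X\eta = -S_\eta X$; applying $\on_{E_i}$ once more in a frame with $\n_{E_i}E_j = 0$ at the base point and splitting tangent/normal parts via the Gauss formula yields
\begin{equation*}
\Delta\eta = -\mathrm{div}(S_\eta) - \sum_i\alpha(E_i, S_\eta E_i).
\end{equation*}
The normal term equals $\tilde{\mathcal{B}}(\eta)$ directly from the definition of the Simons operator: for any normal $\xi$, $\forma{\alpha(E_i, S_\eta E_i),\xi} = \forma{S_\xi E_i, S_\eta E_i}$, and summing over $i$ gives $\mathrm{tr}(S_\xi S_\eta) = \forma{\tilde{\mathcal{B}}(\eta),\xi}$. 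For the tangential term, differentiating $\forma{S_\eta Y, Z} = \forma{\alpha(Y,Z),\eta}$ while using $\np\eta = 0$ produces $\forma{(\n_X S_\eta)Y, Z} = \forma{(\np_X\alpha)(Y,Z),\eta}$; tracing and invoking the Codazzi equation $(\np_X\alpha)(Y,Z) = (\np_Y\alpha)(X,Z)$ gives $\forma{\mathrm{div}(S_\eta), Z} = n\forma{\np_Z\h,\eta}$, which vanishes because $\h$ is parallel.

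For $(\ref{trn1})\Leftrightarrow(\ref{trn3})$, apply the composition formula for tension fields to $\gamma_\eta: M \xrightarrow{\eta} \mathbb{S}^{m-1} \hookrightarrow \mathbb{R}^m$. The second fundamental form of $\mathbb{S}^{m-1}$ in $\mathbb{R}^m$ at $p$ is $A(X,Y) = -\forma{X,Y}p$, so
\begin{equation*}
\tau(\gamma_\eta) = \Delta\eta + \norma{d\eta}^2\eta = -\tilde{\mathcal{B}}(\eta) + \forma{\tilde{\mathcal{B}}(\eta),\eta}\eta,
\end{equation*}
using $\norma{d\eta}^2 = \forma{\tilde{\mathcal{B}}(\eta),\eta}$, which follows from $d\eta(E_i) = -S_\eta E_i$ and $\mathcal{B}^\ast\mathcal{B} = \tilde{\mathcal{B}}$. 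This vanishes precisely when $\tilde{\mathcal{B}}(\eta)$ is pointwise a scalar multiple of $\eta$, i.e., when $\eta$ is an eigenvector of $\tilde{\mathcal{B}}$. For $(\ref{trn2})\Leftrightarrow(\ref{trn3})$, take any compactly supported variation $\eta_t \in \mathcal{N}_1(M)$ with $\eta_0 = \eta$ and set $V := \partial_t\eta_t|_{t=0}$; then $V$ is a normal section with $\forma{V,\eta} = 0$, and every such $V$ arises in this way. Standard integration by parts gives
\begin{equation*}
\tfrac{1}{2}\tfrac{d}{dt}\Big|_{t=0}\mathcal{N}(\eta_t) = -\int_M \forma{\Delta\eta, V} = \int_M \forma{\tilde{\mathcal{B}}(\eta), V},
\end{equation*}
so the Euler--Lagrange condition is that $\tilde{\mathcal{B}}(\eta)$ be orthogonal to every normal $V$ with $\forma{V,\eta}=0$, which is again the eigenvector condition.

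The main obstacle is the vanishing of $\mathrm{div}(S_\eta)$: this is where the parallel mean curvature hypothesis genuinely enters, via a careful pairing of the Codazzi equation with the parallelism of $\eta$. Once the identity $\Delta\eta = -\tilde{\mathcal{B}}(\eta)$ is secured, the rest of the theorem is just the unpacking of three equivalent manifestations of the pointwise condition that $\tilde{\mathcal{B}}(\eta)$ be proportional to $\eta$.
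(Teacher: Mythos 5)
Your proposal is correct, and it reaches the same pivot as the paper --- namely that under both parallelism hypotheses $\Delta\gamma_\eta=-\tilde{\mathcal{B}}(\eta)$, so that harmonicity of $\gamma_\eta$, criticality of $\eta$ for $\mathcal{N}$, and the eigenvector condition all become the single pointwise statement that $\tilde{\mathcal{B}}(\eta)$ is proportional to $\eta$ --- but you arrive at that identity by a genuinely different and more elementary route. The paper derives it from its Killing-field machinery: Lemma~\ref{lemma1} computes $\Delta_M\forma{\eta,V}$ for an arbitrary Killing field $V$ of the ambient space, Corollary~\ref{corol2} specializes to parallel $\eta$, and Proposition~\ref{generlap} then takes $V$ to be the constant coordinate fields of $\mathbb{R}^m$, yielding $-\Delta\gamma_\eta=n\,\mathrm{grad}\forma{\h,\eta}+\tilde{\mathcal{B}}(\eta)$; the tangential term is killed because $\forma{\h,\eta}$ is constant. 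You instead compute $\on_X\eta=-S_\eta X$ directly from Weingarten, split the second derivative by Gauss, identify the normal part with $\tilde{\mathcal{B}}(\eta)$ exactly as in Lemma~\ref{n2eta}, and dispose of the tangential part $\mathrm{div}(S_\eta)$ via Codazzi, obtaining $\forma{\mathrm{div}(S_\eta),Z}=n\forma{\np_Z\h,\eta}=0$ --- which is the same quantity as the paper's $n\,\mathrm{grad}\forma{\h,\eta}$ once one notes $Z\forma{\h,\eta}=\forma{\np_Z\h,\eta}$ for parallel $\eta$. What the paper's detour through Killing fields buys is reusability: the same Corollary~\ref{corol2} drives the sphere and homogeneous-space results later in the paper, whereas your Codazzi computation is tied to the flat ambient space. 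Your treatment of \eqref{trn1}$\Leftrightarrow$\eqref{trn3} via the tension-field composition formula is a cleaner packaging of the paper's ``harmonic iff $\Delta\gamma_\eta=-f\gamma_\eta$'' criterion, and your first-variation argument for \eqref{trn2}$\Leftrightarrow$\eqref{trn3} is the same as Propositions~\ref{husP} and~\ref{eighar}, differing only in that you vary along the actual tangent space of $\mathcal{N}_1(M)$ (sections orthogonal to $\eta$) rather than normalizing an arbitrary normal section.
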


We observe that  elementary Linear Algebra
guarantees the existence of eigenvectors of $\tilde{\mathcal{B}}$ as well as, in the case that $M$ is compact, standard results from Analysis guarantee the existence of a minimizer $\eta$  of the functional $\mathcal{N}$ (if $\mathcal{N}_1(M)\neq\emptyset$ and possibly only in the weak sense). The difficulty to have the harmonicity of the Gauss map $\gamma_{\eta}$ associated to $\eta$
is to guarantee that $\eta$ is    parallel in the normal connection. But existence of parallel unit normal sections in codimension bigger than or equal to $2$ seems to be very restrictive. For example, any minimal surface immersed in any space  form admitting a parallel unit normal section cannot be substantial.  Indeed: If the shape operator of a parallel normal section is zero then it is easy to see that the surface is contained in a totally geodesic hypersurface. If not, it follows from the Ricci equation and the minimality that the surface has flat normal bundle and then parallel one dimensional first normal bundle. This implies that the surface has actually substantial codimension one (see Chapter 2 of \cite{DT}). 

Nevertheless, as explained below,  there are some interesting cases where Theorem \ref {trn} can be applied, as to constant mean curvature hypersurfaces of spheres and to isoparametric submanifolds.

 Although not being able to prove, we believe that   the 
parallelism of the unit normal  section in Theorem \ref{trn} is necessary to have the equivalences among itens \eqref{trn1}, \eqref{trn2} and \eqref{trn3}.

We observe that a normal section $\eta$  in $\mathbb{R}^{n+2}$ of a hypersurface of $M$ of $\mathbb{S}^{n+1}$ is parallel (in the normal connection of $M$) if and only if $\eta=a\nu+b\mu$ where $a, b$ are constants, $\nu$ a unit normal section of $M$ in $\mathbb{S}^{n+1}$ and $\mu$ is the position vector of $M$ in $\mathbb{R}^{n+2}.$

\begin{thm}\label{harm}
Let $M$ be an orientable hypersurface of $\mathbb{S}^{n+1},$  $\eta$ a unit normal section of $M$ in $\mathbb{R}^{n+2}$ parallel in the normal connection and let $\theta\in [0,\pi)$ be the angle between $\eta$ and $\mu.$ We have: 
\begin{enumerate}
\item\label{triv1} If $\theta = 0$ or $\theta=\frac{\pi}{2}$ then the Gauss map $\gamma_\eta$ is harmonic if and only if $M$ is a minimal hypersurface. 
\item\label{ntriv2} If $\theta\neq 0$ and $\theta\neq \frac{\pi}{2}$ then the Gauss map $\gamma_\eta$ is   harmonic  if and only if $M$ is a CMC hypersurface of $\mathbb{S}^{n+1} $ and the second fundamental form of $M$ in $\mathbb{S}^{n+1}$ has constant length and satisfies
$$\norma{S_\nu}^2=nH\left(\cot\theta-\tan\theta\right)+n,$$ 
where $\nu$ is a unit normal vector of $M$ in $\mathbb{S}^{n+1}$ 
 and $H$ is the mean curvature of $M$ with respect to $\nu.$
\end{enumerate}
\end{thm}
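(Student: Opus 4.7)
The plan is to use the standard criterion that a smooth map $f:M\to\mathbb{S}^{n+1}\subset\mathbb{R}^{n+2}$ is harmonic if and only if the rough Laplacian $\Delta_{M}f$ (computed componentwise in $\mathbb{R}^{n+2}$) is everywhere proportional to $f$. By the observation preceding the statement, parallelism of $\eta$ in the normal connection forces $\eta=a\nu+b\mu$ with $a,b\in\mathbb{R}$ constants satisfying $a^{2}+b^{2}=1$; writing $a=\sin\theta$, $b=\cos\theta$ reduces the problem to computing $\Delta_{M}\nu$ and $\Delta_{M}\mu$ and analyzing when $a\,\Delta_{M}\nu+b\,\Delta_{M}\mu$ is a pointwise scalar multiple of $a\nu+b\mu$.

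The two Laplacians are straightforward. The second fundamental form of $M$ in $\mathbb{R}^{n+2}$ splits as $\alpha(X,Y)=\langle S_{\nu}X,Y\rangle\nu-\langle X,Y\rangle\mu$, which after tracing gives $\Delta_{M}\mu=nH\nu-n\mu$. For $\nu$, the parallelism of $\nu$ in the normal bundle of $M\subset\mathbb{R}^{n+2}$ yields $\overline{\nabla}_{X}\nu=-S_{\nu}X$; differentiating once more, tracing, and invoking the Codazzi identity of $M\subset\mathbb{S}^{n+1}$ (which, since $\mathbb{S}^{n+1}$ has constant sectional curvature, produces $\sum_{i}(\nabla^{M}_{e_{i}}S_{\nu})e_{i}=n\,\nabla^{M}H$) gives
\[
\Delta_{M}\nu=-n\,\nabla^{M}H-\Vert S_{\nu}\Vert^{2}\,\nu+nH\,\mu.
\]
Combining,
\[
\Delta_{M}\eta=-an\,\nabla^{M}H+(bnH-a\Vert S_{\nu}\Vert^{2})\,\nu+(anH-bn)\,\mu,
\]
whose component tangent to $M$ is $-an\,\nabla^{M}H$ and whose component normal to $M$ in $\mathbb{R}^{n+2}$ lies in $\mathrm{span}\{\nu,\mu\}$. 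Harmonicity of $\gamma_{\eta}$ is therefore equivalent to the simultaneous vanishing of the tangential part and the proportionality of the normal part to $\eta$, i.e.
\[
a\,\nabla^{M}H=0 \qquad\text{and}\qquad b(bnH-a\Vert S_{\nu}\Vert^{2})-a(anH-bn)=0.
\]

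Finally I would read off the three angle regimes. When $\theta=0$ (so $a=0,b=1$) the tangential equation is automatic and the determinant condition collapses to $nH=0$; when $\theta=\pi/2$ (so $a=1,b=0$) the tangential equation forces $H$ to be constant while the determinant again reduces to $nH=0$. Both cases yield minimality, giving part \eqref{triv1}. When $\theta\neq 0,\pi/2$ we have $ab\neq 0$: the tangential equation forces $H$ constant, and dividing the determinant condition by $ab$ yields exactly
\[
\Vert S_{\nu}\Vert^{2}=nH(\cot\theta-\tan\theta)+n,
\]
which in particular forces $\Vert S_{\nu}\Vert^{2}$ to be constant; conversely, these two conditions make both pieces of $\Delta_{M}\eta$ behave as required. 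The main technical step is the computation of $\Delta_{M}\nu$, since one must combine Codazzi with the symmetry of $S_{\nu}$ to collapse $\sum_{i}(\nabla^{M}_{e_{i}}S_{\nu})e_{i}$ into $n\,\nabla^{M}H$; once that identity is in hand, the harmonicity criterion becomes an elementary $2\times 2$ linear algebra problem in the normal plane $\mathrm{span}\{\nu,\mu\}$.
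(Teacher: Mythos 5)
Your proof is correct, and its endgame coincides with the paper's: both arrive at
\[
\Delta\gamma_\eta \;=\; -an\,\mathrm{grad}\,H+\bigl(bnH-a\Vert S_\nu\Vert^{2}\bigr)\nu+\bigl(anH-bn\bigr)\mu
\]
(this is Proposition~\ref{lemmasphere} up to an overall sign) and then impose that the tangential part vanish and that the normal part have no component along $\eta^{\perp}=-b\nu+a\mu$, which yields exactly the three angle regimes, including the subtlety that for $\theta=\tfrac{\pi}{2}$ the $\mu$-component $nH$ must also vanish so that constancy of $H$ alone does not suffice. Where you differ is in how the displayed formula is obtained. The paper deduces it from its general framework: Corollary~\ref{corol2} (the Laplacian of $\langle\eta,V\rangle$ for a Killing field $V$, built on the rough Laplacian of a normal section and Lemma~\ref{n2eta}) specializes to Proposition~\ref{generlap}, $-\Delta\gamma_\eta=n\,\mathrm{grad}\langle\overrightarrow{H},\eta\rangle+\tilde{\mathcal{B}}(\eta)$, after which one evaluates $\tilde{\mathcal{B}}(\nu)$ and $\tilde{\mathcal{B}}(\mu)$. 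You instead compute $\Delta_M\nu$ and $\Delta_M\mu$ directly, with the traced Codazzi equation of $M\subset\mathbb{S}^{n+1}$ supplying the $n\,\mathrm{grad}\,H$ term and the splitting $\alpha(X,Y)=\langle S_\nu X,Y\rangle\nu-\langle X,Y\rangle\mu$ supplying the rest. This is more elementary and self-contained for this one theorem, at the cost of not producing the general identity that the paper reuses for Theorems~\ref{Classification}, \ref{nhS4} and \ref{isorn} and for the homogeneous-space results of Section~\ref{AGM}. Your appeal to the observation preceding the theorem for the constancy of $a$ and $b$ is legitimate (it follows since $\nu$ and $\mu$ are themselves parallel in the normal connection), and the final $2\times 2$ reduction in $\mathrm{span}\{\nu,\mu\}$ is exactly the paper's; there is no gap.
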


Item \eqref{triv1}  of Theorem~\ref{harm} is essentially known. Indeed, the case  $\theta=0$ is included in the Theorem 3 of \cite{Ta} and the case $\theta=\frac{\pi}{2}$  can be proved from the results of  \cite{ChenX} and \cite{I} with the Gauss map as defined by Obata \cite{O}.  Also, although not explicitely stated in \cite{JS}, the case $\theta=\frac{\pi}{2}$  follows from its results too  (Section 5 of \cite{JS}).

In the next result we apply  Theorem \ref{harm} to characterize the minimal Clifford torus and the $H(r)$-torus in the sphere $\mathbb{S}^{n+1}$  in terms of the harmonicity of the Gauss maps associated to unit normal sections in $\mathbb{R}^{n+2}$ parallel in the normal connection. We  make use of  theorems on  CMC hypersurfaces in the sphere  due to  H. Alencar,  S. Chern, M.  Do Carmo and S. Kobayashi \cite{AdC} and  \cite{CdCK}.
We need to recall a notation of \cite{AdC}. 

Let $M$ be a compact and oriented CMC hypersurface of $\mathbb{S}^{n+1}.$ We assume that $M$ is oriented in such way that the mean curvature $H$ of $M$ is non negative. For each $H,$ set 
$$P_H(x)=x^2+\frac{n(n-2)}{\sqrt{n(n-1)}}Hx-n(H^2+1),$$
and let $B_H$ be the square of the positive root of $P_H(x)=0.$

We observe that when $H\neq 0$ the equation on $\theta$
\begin{equation}\label{angle2}
nH\left(\cot\theta-\tan\theta\right)= B_H+nH^2-n
\end{equation}
has exactly two solutions $\theta_1, \theta_2\in (0,\pi),$ where $\theta_1\in (0,\frac{\pi}{2})$ and $\theta_2=\theta_1+\frac{\pi}{2}.$ These solutions correspond to two parallel unit normal sections of $M$ in $\mathbb{R}^{n+2}$ having angles $\theta_1, \theta_2$ with $\mu.$

\begin{corol}\label{corol3}
Let $M$ be a compact  CMC hypersurface of $\mathbb{S}^{n+1}\subset \mathbb{R}^{n+2}.$     Then, 
\begin{enumerate}
\item Case $H=0$:  The following alternatives are equivalent:
\begin{enumerate}
\item\label{corol3ia} For any parallel unit normal section $\eta$ of $M$ in $\mathbb{R}^{n+2}$ the Gauss map $\gamma_\eta$ is a harmonic map.
\item\label{corol3ib} There exists a parallel unit normal section $\eta$ of $M$ in $\mathbb{R}^{n+2}$ where the angle  between $\eta$ and $\mu$ is neither $0$ nor $\frac{\pi}{2}$ and the associated Gauss map $\gamma_\eta$ is a harmonic map. 
\item\label{corol3ic} $M$ is a minimal Clifford torus, that is,
$$M=\mathbb{S}^{k}\left(\left(\frac{k}{n}\right)^\frac{1}{2}\right)\times \mathbb{S}^{n-k}\left(\left(\frac{n-k}{n}\right)^\frac{1}{2}\right). $$
\end{enumerate}
\item\label{corol3i2}  Case $H\neq 0$ and $n> 2:$   

\begin{enumerate}

\item $M$ is an  $H(r)$-torus with $r^2<(n-1)/{n}$
if and only if the only harmonic Gauss maps of $M$ are, up to sign, the two parallel unit normal sections determined by the two solutions of equation \eqref{angle2}.

\item Let $\eta$ be is a parallel unit normal section of $M$.  If  the Gauss map $\gamma_\eta$ is a harmonic map and the angle $\theta$ between $\eta$ and $\mu$ satisfies
\begin{equation*}
nH\left(\cot\theta-\tan\theta\right)< B_H+nH^2-n.
\end{equation*} 
then  $M$ is a totally umbilical hypersurface of $\mathbb{S}^{n+1}$  with principal curvature $\lambda>0$ and
\begin{equation*}
\cot\theta-\tan\theta=\frac{\lambda^2-1}{\lambda}.
\end{equation*}
\end{enumerate}

\item Case $H\neq 0$ and $n=2:$ Let $\eta$ be a parallel unit normal section of $M$ in $\mathbb{R}^{n+2},$ and  let $\theta$ the angle between $\eta$ and $\mu.$ Then $\gamma_\eta$ is harmonic if and only if $M$ is one of the following two surfaces:

\begin{enumerate}
\item a product of circles $\mathbb{S}^1(r)\times \mathbb{S}^1(\sqrt{1-r^2})$ with $0<r<1,$ $r^2\neq 1/2$ and 
\begin{equation*}\label{corol3_4}
\cot\theta-\tan\theta=\frac{1-2r^2}{r\sqrt{1-r^2}}
\end{equation*}

\item  a totally umbilical hypersurface of the sphere with principal curvature $\lambda\neq 0$ and 
\begin{equation*}\label{corol3_5}
\cot\theta-\tan\theta=\frac{\lambda^2-1}{\lambda}.
\end{equation*}
\end{enumerate}
\end{enumerate}
\end{corol}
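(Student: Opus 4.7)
The plan is to apply Theorem \ref{harm} to each parallel unit normal section of the form $\eta=\cos\theta\,\mu+\sin\theta\,\nu$ (with $\theta\in[0,\pi)$ constant, by the remark preceding the theorem) and then combine the resulting pointwise identity with classical pinching theorems for CMC hypersurfaces of the sphere. Theorem \ref{harm}(\ref{triv1}) handles $\theta\in\{0,\pi/2\}$ at once (harmonic iff $M$ is minimal), while for the remaining angles Theorem \ref{harm}(\ref{ntriv2}) converts harmonicity of $\gamma_\eta$ into the pointwise equation
\[
\norma{S_\nu}^2 \;=\; nH(\cot\theta-\tan\theta)+n,
\]
which in particular forces $\norma{S_\nu}$ to be constant. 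The rest of the argument runs case by case.

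For the case $H=0$, the identity above reduces to $\norma{S_\nu}^2\equiv n$, and the Chern--do Carmo--Kobayashi theorem \cite{CdCK} identifies $M$ with a minimal Clifford torus; this gives (\ref{corol3ib})$\Rightarrow$(\ref{corol3ic}). Conversely, on the Clifford torus $\norma{S_\nu}^2\equiv n$ holds identically and the hypersurface is minimal, so every $\theta$ produces a harmonic Gauss map by Theorem \ref{harm}, proving (\ref{corol3ic})$\Rightarrow$(\ref{corol3ia}); the implication (\ref{corol3ia})$\Rightarrow$(\ref{corol3ib}) is trivial, since parallel unit normal sections with $\theta\notin\{0,\pi/2\}$ always exist.

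For the case $H\neq 0$ and $n>2$, equation \eqref{angle2} is equivalent to $\norma{S_\nu}^2-nH^2 = B_H$. The Alencar--do Carmo theorem \cite{AdC} asserts that a compact CMC hypersurface of $\mathbb{S}^{n+1}$ with $\norma{S_\nu}^2-nH^2\leq B_H$ is either totally umbilical or an $H(r)$-torus with $r^2<(n-1)/n$, and the torus case has $\norma{S_\nu}^2-nH^2\equiv B_H$. For item (a), the direct implication uses that on such an $H(r)$-torus $\norma{S_\nu}^2$ is the constant $B_H+nH^2$, so Theorem \ref{harm}(\ref{ntriv2}) makes $\gamma_\eta$ harmonic precisely for $\theta$ solving \eqref{angle2} (the cases $\theta\in\{0,\pi/2\}$ being excluded by $H\neq 0$). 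The converse forces $\norma{S_\nu}^2-nH^2=B_H$, so Alencar--do Carmo applies; the totally umbilical alternative is ruled out because a totally umbilical hypersurface with principal curvature $\lambda=H$ would produce harmonic Gauss maps at angles with $\cot\theta-\tan\theta=(\lambda^2-1)/\lambda$, an angle relation distinct from the solutions of \eqref{angle2}, contradicting ``only''. For item (b), the displayed inequality amounts to $\norma{S_\nu}^2-nH^2<B_H$, so $M$ is totally umbilical by Alencar--do Carmo; substituting $\norma{S_\nu}^2=n\lambda^2$ with $\lambda=H>0$ into the basic identity yields $\cot\theta-\tan\theta=(\lambda^2-1)/\lambda$.

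For the case $n=2$, constancy of $H$ and of $\norma{S_\nu}^2$ forces both principal curvatures $\lambda_1,\lambda_2$ to be constant (since $\lambda_1+\lambda_2=2H$ and $\lambda_1^2+\lambda_2^2=\norma{S_\nu}^2$), so $M$ is isoparametric in $\mathbb{S}^3$. The classical classification then yields either a totally umbilical surface with principal curvature $\lambda\neq 0$ or a flat torus $\mathbb{S}^1(r)\times\mathbb{S}^1(\sqrt{1-r^2})$ with $r^2\neq 1/2$ (the value $r^2=1/2$ being excluded because it gives $H=0$). Direct substitution of the principal curvatures of each model into $\norma{S_\nu}^2=2H(\cot\theta-\tan\theta)+2$ yields, respectively, $\cot\theta-\tan\theta=(\lambda^2-1)/\lambda$ and $\cot\theta-\tan\theta=(1-2r^2)/(r\sqrt{1-r^2})$, giving (3)(a) and (3)(b). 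The main obstacle throughout is the bookkeeping matching each angle equation with the correct value of $\norma{S_\nu}^2$, and in particular the uniqueness-of-angles argument in (2)(a) that excludes the totally umbilical alternative from Alencar--do Carmo.
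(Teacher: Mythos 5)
Your proposal is correct and follows essentially the same route as the paper: Theorem \ref{harm} converts harmonicity of $\gamma_\eta$ into the identity $\norma{S_\nu}^2=nH(\cot\theta-\tan\theta)+n$, and the three cases are then settled exactly as in the paper by the Chern--do Carmo--Kobayashi rigidity theorem, the Alencar--do Carmo theorem, and the classification of isoparametric surfaces of $\mathbb{S}^3$. The only (harmless) extra step is your argument ruling out the totally umbilical alternative in case (2)(a), which is already automatic since there $\norma{S_\nu}^2-nH^2=B_H>0$ excludes total umbilicity.
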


A question that arises is if the hypersurfaces of $\mathbb{S}^{n+1}$ appearing in Corollary $\ref{corol3}$ are the only hypersurfaces of the sphere  that admits a parallel unit normal section such that its associated Gauss map is a harmonic map. The answer is no, as one may see in Theorem \ref{isorn} below. 

A well known family of submanifolds of the Euclidean space, studied in the classical and modern theory of submanifolds, where we can apply Theorem \ref{trn},  is the family of  isoparametric submanifolds. Recall that a complete submanifold $M$ of $\mathbb{R}^m$ or $\mathbb{S}^m$ is called isoparametric if it has flat normal bundle and its principal curvature in the direction of any parallel normal vector field is constant \cite{T}.  

The classification of the isoparametric submanifolds of the Euclidean space is a long standing problem in Differential Geometry. It can be reduced to the problem of classifying the isoparametric submanifolds in the spheres since  any isoparametric submanifold of  $\mathbb{R}^m$ is the product of an isoparametric submanifold of a sphere with an affine subspace of $\mathbb{R}^m$ \cite{T}. A classification of the compact isoparametric submanifols of codimension bigger than or equal to 2 in spheres have been obtained during the last decades; however, a classification of the isoparametric hypersurfaces was obtained only recently  (see \cite{Qchi}). As the authors know,  a complete classification is still an open problem. In the next result we obtain what we believe to be a strong property of isoparametric submanifolds of the the Euclidean space,  seemingly not known:

 \begin{thm}\label{isorn}
 Let $M$ be  an isoparametric  submanifold of $\mathbb{R}^{m}.$  Then the Simons operator $\tilde{\mathcal{B}}$ has constant non negative eigenvalues  and there is an orthonormal basis $\eta_{1},\cdots,\eta_{r}$ of $\mathcal{N}_1(M)$ of eigenvectors of $\tilde{\mathcal{B}}$ such that the  Gauss maps $\gamma_{\eta_i}:M\to \mathbb{S}^{m-1},$ $i=1, \cdots, r,$ are harmonic maps. 
\end{thm}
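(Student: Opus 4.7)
The plan is to reduce the theorem to Theorem~\ref{trn} by constructing, on any isoparametric $M$, a global parallel orthonormal normal frame that simultaneously diagonalizes the Simons operator; once such a frame is in hand, everything follows from what has already been established.

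First, I would invoke the standard structure of an isoparametric submanifold $M \subset \mathbb{R}^{m}$: the normal bundle $TM^\perp$ is flat and admits a globally defined parallel orthonormal frame $\xi_{1},\dots,\xi_{r}$, and the corresponding shape operators $S_{\xi_{i}}$ commute pairwise and are simultaneously diagonalized by the curvature distributions of $M$, with principal curvatures that are constant functions on $M$ (for the Euclidean case this reduces via the splitting theorem of Terng to the spherical case plus a flat factor). Because $\mathrm{tr}(S_{\xi_{i}})$ is a sum of constant principal curvatures, $\forma{\h,\xi_{i}}=\frac{1}{n}\mathrm{tr}(S_{\xi_{i}})$ is constant on $M$, and since the $\xi_{i}$ are parallel in the normal connection, expanding $\h=\sum_i \forma{\h,\xi_{i}}\xi_{i}$ shows that $\h$ is parallel.

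Second, I would compute the matrix of $\tilde{\mathcal{B}}$ in the frame $(\xi_{i})$. The adjointness relation gives
\[
\forma{\tilde{\mathcal{B}}(\xi_{i}),\xi_{j}}=\forma{\mathcal{B}(\xi_{i}),\mathcal{B}(\xi_{j})}=\mathrm{tr}(S_{\xi_{i}}S_{\xi_{j}}),
\]
and because the $S_{\xi_{i}}$ commute and have constant eigenvalues, each entry $\mathrm{tr}(S_{\xi_{i}}S_{\xi_{j}})$ is a sum of products of constants and hence constant on $M$. Diagonalizing this constant symmetric matrix by a fixed orthogonal matrix $(a_{ij})$ yields normal sections $\eta_{i}:=\sum_{j}a_{ij}\xi_{j}$ which, being linear combinations of parallel sections with constant coefficients, are themselves parallel and remain pointwise orthonormal, and which are pointwise eigenvectors of $\tilde{\mathcal{B}}$ with constant eigenvalues. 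The non-negativity of these eigenvalues is immediate from $\tilde{\mathcal{B}}=\mathcal{B}^{\ast}\mathcal{B}$, since $\forma{\tilde{\mathcal{B}}(\eta),\eta}=\norma{S_{\eta}}^{2}\geq 0$.

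Third, I would apply Theorem~\ref{trn}: each $\eta_{i}$ is a parallel unit normal section of the submanifold $M$ (which has parallel mean curvature by the first step) and is an eigenvector of the Simons operator, so the associated Gauss map $\gamma_{\eta_{i}}$ is harmonic. The only real difficulty I foresee is notational and bibliographic: one must cite cleanly the structural facts about isoparametric submanifolds, namely flatness and global parallelizability of the normal bundle together with the commutativity and constancy of eigenvalues of the parallel shape operators; once these are taken for granted, the argument is a short exercise in simultaneous diagonalization by a constant orthogonal transformation.
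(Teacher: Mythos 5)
Your proposal is correct and takes essentially the same route as the paper: both arguments rest on the global flatness of the normal bundle of an isoparametric submanifold to produce a parallel orthonormal normal frame in which the Gram matrix $\forma{S_{\eta_i},S_{\eta_j}}=\forma{\tilde{\mathcal{B}}(\eta_i),\eta_j}$ is constant, then diagonalize by a constant orthogonal change of frame (the paper merely reverses the order, diagonalizing $\tilde{\mathcal{B}}$ at one point and extending the eigenbasis by parallel transport), and both conclude via Theorem~\ref{trn}. Your version is in fact slightly more complete, since you explicitly check that $\h$ is parallel --- a hypothesis of Theorem~\ref{trn} that the paper's proof leaves implicit --- and you justify the constancy of $\mathrm{tr}(S_{\xi_i}S_{\xi_j})$ via commutativity and simultaneous diagonalization rather than asserting it from the constancy of the individual spectra alone.
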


In Section 6  we  extend Theorem \ref{trn} to more general ambient spaces, which include symmetric spaces, and  prove an extension of Theorem \ref{isorn} to minimal isoparametric submanifolds of spheres. A key concept related to the existence of parallel normal sections,  that allows to obtain part of our extensions, is that of \textit{polar action} (see section \ref{HUNS}). Our results also extend and generalizes several results of \cite{SMB}, \cite{SML}, \cite{BR},  \cite{EFFR}, \cite{FR}, \cite{M} and \cite{RR}.

The paper is organized as follows. In Section~\ref{P} we give  basic definitions and set the notation used throughout the paper. 

In Section~\ref{rl} we define and prove several facts about the rough Laplacian of a vector field along a submanifold $M$ of a Riemannian manifold $N.$ In particular we obtain an extension, to submanifolds of arbitrary codimension,  of Proposition 1 of \cite{FR}. This proposition, which gives a formula, in case of hypersufaces, for the Laplacian of a  function of the form $\forma{\eta,V},$ where $\eta$ is a unit normal section of $M$ and $V$ is a Killing vector field of $N$, is fundamental for proving Theorem 1 of \cite{RR}. The extension of this proposition obtained here (Corollary \ref{corol2}) is also  fundamental for the paper. 

In  Section~\ref{HUNS}  we obtain the Euler-Lagrange equation of the critical points of an energy functional  and prove the existence of a harmonic unit normal section on any principal orbit of a polar action.  

In Section~\ref{EuclideanSpace} we prove our main results on the Euclidean space stated above. Finally, in Section~\ref{AGM} we extend the results of the Euclidean space to a class of homogeneous space where a Gauss map  is naturally associated to a unit normal section of a submanifold of the space. A special interesting case  is the sphere $\mathbb S^7$ with the octonionic multiplication.

\section{Preliminaries}\label{P}

Let $M$ be $n-$dimensional smooth manifold immersed in a Riemannian manifold $N.$
Denote by $\mathcal{T}(M)$ and $\mathcal{T}(N)$ the space of smooth sections of $TM$ and $TN$ that is, the space of smooth vector fields of $M$ and $N$ respectively. Set $\mathcal T(M,N):=\mathcal N(M) \oplus\mathcal T(M).$ If $W\in  \mathcal T(M,N)$ then we say that $W$ is a \emph {vector field along} $M.$

We denote by  $\n^\perp:=\left (\n\right )^\perp$  the normal connection of $M,$ acting on $TM^\perp,$ where $\perp$ is the orthogonal projection of $TN$ on $TM^\perp.$ The  curvature tensor ${\rm R}$ and normal curvature tensor $\rm {R}^\perp$ are given by  
\begin{eqnarray*}
{\rm R}(X,Y)Z &=& \n_Y \n_X Z-\n_X\n_Y Z + \n_{[X,Y]} Z,\\
{\rm R}^\perp(X,Y)\eta  & = & \np_Y \np_X \eta-\np_X\np_Y \eta+ \np_{[X,Y]} \eta,
\end{eqnarray*}
for  $X, Y, Z$ vector fields of $M$ and $\eta$ a normal vector field of $M$. 
We say that the normal bundle of $M$ is flat if  ${\rm R}^\perp\equiv 0.$

The mean curvature vector of the immersion of $M$ in $N$  is defined by $\overrightarrow{H}=\frac{1}{n}{\rm tr}(\mathcal{B}),$
where $\mathcal{B}$ denotes the second fundamental form of the immersion of $M$ in $N$ and $n$ is the dimension of $M.$

Next we introduce an operator, the normal Ricci operator,  that appears frequently in our results. First consider the  symmetric bilinear form ${\rm Ric}_M$ on $\mathcal T(M,N)$ defined as follows: Given $Z, W\in \mathcal T(M,N)$ then ${\rm Ric}_M(Z,W)$ is the trace of
$$(X,Y) \in \mathcal T(M)\times \mathcal T(M)  \mapsto \forma{{\rm R}(Z,X)W,Y}.$$
We then define the normal Ricci operator %
 $${\rm Ric}_M^\perp: \mathcal{N}(M)\to \mathcal{N}(M)$$
 of $M$ by the relation 
 $${\rm Ric}_M(\eta_1,\eta_2)=\forma{{\rm Ric}_M^\perp(\eta_1),\eta_2},$$ 
 for all $\eta_1, \eta_2\in \mathcal{N}(M).$ Notice that the normal Ricci operator is self-adjoint.  In particular, if $N$ is a connected spaces of constant curvature $c$ then for any unit normal section $\eta$ of $M$ the normal Ricci operator of $\eta$ is given by
\begin{eqnarray}\label{RicciConstantspaces}
 {\rm Ric}_M^\perp(\eta)=cn\ \eta.
\end{eqnarray} 

We conclude this section giving more details for the computation of the Simons Operator $\tilde{\mathcal{B}}$. The vector bundle homomorphism $ \mathcal{B}:\mathcal{N}(M)\to  \mathcal{S}(M)$ is defined by
 $\mathcal B(\eta)=S_\eta,$ where $S_\eta$ is the second fundamental form associated to $\eta.$ The bundle $\mathcal{S}(M)$ is provided with the Hilbert-Schimidt-metric, that is, for $T_1, T_2\in \mathcal{S}(M)$ we have, at each fibre, 
 \begin{eqnarray*}
\forma{T_1,T_2}=\sum_{i=1}^n \forma{T(E_i),T(E_i)}, 
\end{eqnarray*}
where  ${E_1,\cdots, E_n}$ is a local orthonormal basis of $TM.$  Since, the Simons operator is defined by  $\mathcal{B}^\ast  \mathcal{B}$ where $\mathcal{B}^\ast: \mathcal{S}(M)\to \mathcal{N}(M)$  is the fiber-wise adjoint map of $\mathcal B,$ we have 
\begin{eqnarray}
\forma{\tilde{\mathcal{B}}(\eta_1),\eta_2}=\forma{\mathcal{B}^\ast  \mathcal{B}(\eta_1),\eta_2}=\forma{\mathcal{B}(\eta_1), \mathcal{B}(\eta_2)}=\forma{S_{\eta_1},S_{\eta_2}},
\end{eqnarray}
for  any $\eta_1, \eta_2\in \mathcal{N}(M).$

\section{The rough Laplacian of a vector field along a submanifold}\label{rl}
The notion of the rough Laplacian of a vector field of a Riemannian manifold is well know \cite{U}. 
In this section we define and prove several facts about the rough Laplacian of a vector field along a submanifold $M$ of $N$,  to be used  in the next sections.  Some of the results might have independent interest.

The rough Laplacian $\n^2W$ of  vector field $W\in \mathcal T(M,N)$ along $M$ is also a vector field along $M$ defined by
 $$\n^2 W=\sum_{i=1}^n (\n_{E_i}\n_{E_i}W-\n_{\n_{E_i}^\top E_i}W),$$ 
 where  $n=\dim (M)$ and $\{E_1,\cdots,E_n\}$ is a local orthonormal basis of $M$. One may  see that this formula is given as the trace of a bilinear form and hence does not depend of the orthonormal basis $\{E_j\}.$

We are mainly interested in the case of the rough Laplacin acting on Killing fields.
\begin{exa}
Let $M$ be an $n$-dimensional  manifold immersed in the Euclidean space $\mathbb{R}^{m}$ and let $V$ be a Killing vector field of $\mathbb{R}^{m}.$ One may  see that  
\begin{eqnarray*}
\n^2 V = n\n_{\overrightarrow{H}}V,
\end{eqnarray*}
where $\n$ denotes the Riemannian connection of $\mathbb{R}^{m}.$
\end{exa}

\begin{exa}\label{sm}
Let $M$ be an $n$-dimensional  manifold immersed in the sphere $\mathbb{S}^{m}$ and let $V$ be a Killing vector field of $\mathbb{S}^m.$  

We denote by $\n$ and $\overline{\n}$ the Riemannian connections of $\mathbb{S}^m$ and $\mathbb{R}^{m+1},$  respectively. Note that $V$ can be represented as a constant skew-symmetric matrix $A$ of dimension $m+1,$ that is, $V(p)=Ap,$ where $p$ is  regarded  as column vector of $\mathbb{R}^{m+1}.$

Choose $p\in M$ and let  $\{E_1,\ \cdots,\ E_n\}$ be a local orthonormal basis of $M$ geodesic at $p.$ Then, at $p$ 
\begin{eqnarray*}
\n^2 V&=& \sum_i\n_{E_i}\n_{E_i}V\\
&=& \sum_i\left(\on_{E_i}\left(A E_i-\forma{A E_i,p}p \right)-\forma{\on_{E_i}\left(A E_i-\forma{A E_i,p}p \right),p}p\right)\\
&=& \sum_i\left(A \on_{E_i}E_i-\forma{A \on_{E_i}E_i,p}p-\forma{A E_i,p}E_i\right)\\
&=& nA\overrightarrow{H}-nAp-n\left\langle A\overrightarrow{H},p\right\rangle p+ V^{\top_M}\\
& =& n \n_{\overrightarrow{H}} V- n V+ V^{\top_M}.
\end{eqnarray*}
where $(\cdot)^{\top_M}$ denotes the orthogonal projection on $TM.$
\end{exa}

\begin{exa}
Let $M$ be an $n$-dimensional manifold immersed in the hyperbolic space $\mathbb{H}^m$ and let $V$ be a Killing vector field of $\mathbb{H}^m$ then, 
\begin{eqnarray}\label{hypc}
\n^2 V =  n \n_{\overrightarrow{H}} V+ n V- V^{\top_M},
\end{eqnarray}
where $(\cdot)^{\top_M}$ denotes the orthogonal projection on $M$ and $\n$ is the Riemannian connection of $\mathbb{H}^m.$ This formula can be obtained as the previous case of the sphere by using the Lorentz model of the hyperbolic space. 
\end{exa}

   The next lemma gives an expression for the tangent part of the rough Laplacian of a normal section.

\begin{lem}\label{Tangentpart}
Let $M$ be an $n$-dimensional  manifold immersed in a Riemannian manifold $N.$ Then,  for all $\eta\in \mathcal{N}(M)$ and  $X\in \mathcal T(M)$ it holds the equality
\begin{small}
\begin{equation*}
\forma{\n^2 \eta,X}= {\rm Ric}_M(\eta, X)-n\forma{{\rm grad}\forma{\overrightarrow{H},\eta},X}+ n\forma{\overrightarrow{H},\n_X\eta}-2 {\rm tr}\left(S_{\np_{(\cdot)}\eta}(X)\right),
\end{equation*}
\end{small}
where  ${\rm grad}$ denotes the gradient on $M$  and $\overrightarrow{H}$  the mean curvature vector  of $M.$
\end{lem}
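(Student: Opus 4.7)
The plan is to compute pointwise at an arbitrary $p\in M$, using a local orthonormal tangent frame $\{E_1,\dots,E_n\}$ of $M$ geodesic at $p$, so that $\nabla^\top_{E_i}E_j(p)=0$ and hence $\nabla^2\eta(p)=\sum_{i=1}^n\nabla_{E_i}\nabla_{E_i}\eta$.

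First I apply the Gauss--Weingarten decomposition twice: writing $\nabla_{E_i}\eta=-S_\eta(E_i)+\nabla^\perp_{E_i}\eta$ and differentiating once more, the resulting expressions split into tangent and normal parts. The purely normal summands vanish on pairing with the tangent vector $X$, leaving
\begin{equation*}
\langle\nabla^2\eta,X\rangle=-\sum_i\langle\nabla^\top_{E_i}(S_\eta E_i),X\rangle-\sum_i\langle S_{\nabla^\perp_{E_i}\eta}(E_i),X\rangle.
\end{equation*}
By the self-adjointness of the shape operator, the second sum is already $\operatorname{tr}(S_{\nabla^\perp_{(\cdot)}\eta}(X))$; this is the first copy of the trace term in the desired formula.

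The heart of the proof is to rewrite the first sum. Since the frame is geodesic at $p$ and $\nabla^\top_{E_i}S_\eta$ is self-adjoint, the first sum equals $\sum_i\langle(\nabla^\top_{E_i}S_\eta)(X),E_i\rangle$. I then invoke the Codazzi equation of $M\subset N$, contracted with $\eta$, which in terms of the shape operator takes the form
\begin{equation*}
\langle(\nabla^\top_X S_\eta)(Y),Z\rangle-\langle(\nabla^\top_Y S_\eta)(X),Z\rangle=\langle S_{\nabla^\perp_X\eta}(Y),Z\rangle-\langle S_{\nabla^\perp_Y\eta}(X),Z\rangle-\langle R^N(X,Y)Z,\eta\rangle,
\end{equation*}
set $Y=Z=E_i$, and sum over $i$. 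The four resulting traces simplify, respectively, to $X(\operatorname{tr} S_\eta)=n\langle\operatorname{grad}\langle\overrightarrow{H},\eta\rangle,X\rangle$ (using $\operatorname{tr} S_\eta=n\langle\overrightarrow{H},\eta\rangle$), to $\operatorname{tr}(S_{\nabla^\perp_X\eta})=n\langle\overrightarrow{H},\nabla_X\eta\rangle$, to a second copy of $\operatorname{tr}(S_{\nabla^\perp_{(\cdot)}\eta}(X))$, and to the ambient curvature contraction $\sum_i\langle R^N(X,E_i)E_i,\eta\rangle$, which by the pair-swap and skew-symmetries of $R^N$ equals $-{\rm Ric}_M(\eta,X)$ in the paper's convention for $R^N$. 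Substituting the resulting expression for the first sum into the preliminary identity collects the two $\operatorname{tr}(S_{\nabla^\perp_{(\cdot)}\eta}(X))$ contributions into $-2\operatorname{tr}(S_{\nabla^\perp_{(\cdot)}\eta}(X))$ and produces exactly the stated formula.

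The principal obstacle is bookkeeping of signs: the Codazzi equation carries a sign on its curvature term that depends on the convention for $R^N$, and the identification of $\sum_i\langle R^N(X,E_i)E_i,\eta\rangle$ with $\pm{\rm Ric}_M(\eta,X)$ carries a compensating sign. These must be tracked consistently with the paper's (non-standard) choice of $R^N$ so that ${\rm Ric}_M(\eta,X)$ appears with coefficient $+1$ in the final answer. Beyond Gauss--Weingarten and Codazzi, no further identity enters the argument.
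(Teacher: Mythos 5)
Your proof is correct, and it reaches the stated identity by a route that is organized differently from the paper's. The paper never invokes the Codazzi equation: it differentiates the relation $\forma{E_j,\eta}=0$ twice in a frame geodesic at $p$, throws the derivatives onto the frame fields, and commutes $\n_{E_i}\n_{E_j}E_i$ into $\n_{E_j}\n_{E_i}E_i$ directly via the definition of the curvature tensor, identifying $\sum_i\n_{E_i}E_i$ with $n\overrightarrow{H}$ at the end. You instead split $\n\eta$ by the Weingarten formula, reduce the tangential part of $\n^2\eta$ to $-\sum_i(\n^{\top}_{E_i}S_\eta)(E_i)-\sum_iS_{\np_{E_i}\eta}(E_i)$, and then feed the first sum into the contracted Codazzi equation. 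The two arguments are of course the same computation in disguise --- Codazzi is exactly the packaged form of the derivative commutation the paper performs by hand --- but yours makes the structural source of each term more transparent (the two copies of ${\rm tr}(S_{\np_{(\cdot)}\eta}(X))$ visibly arise once from Weingarten and once from Codazzi, and the $n\forma{\overrightarrow{H},\n_X\eta}$ term is just ${\rm tr}\,S_{\np_X\eta}$), at the cost of having to fix the sign conventions in Codazzi against the paper's nonstandard choice $R(X,Y)Z=\n_Y\n_XZ-\n_X\n_YZ+\n_{[X,Y]}Z$. You flag that issue explicitly, and your resolution is consistent: with that convention one indeed has $\sum_i\forma{R(X,E_i)E_i,\eta}=-{\rm Ric}_M(\eta,X)$ and the Codazzi identity carries the minus sign you wrote, so ${\rm Ric}_M(\eta,X)$ lands with coefficient $+1$ as required.
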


\begin{proof}
Choose $p\in M$ and let  $\{E_1,\ \cdots,\ E_n\}$ be a local orthonormal frame of $M$ geodesic at $p.$ Then, 
$$(\n^2 \eta)(p)=\sum_{i=1}^n \n_{E_i}\n_{E_i}\eta.$$ 
The following equalities are satisfied 
 $$\forma{\n_{E_i}[E_i,E_j],\eta}=\forma{[E_i, [E_i,E_j]],\eta}=0$$
  and $\forma{E_j,\eta}=0,$ along  $M.$  Then, for each $j\in\{1,\cdots, n\}$ we have at  $p$ 
\begin{footnotesize}
\begin{eqnarray*}
\forma{\n^2 \eta, E_j} &=& -\sum_{i=1}^n\left( \forma{\n_{E_i}\n_{E_i}E_j,\eta}+2\forma{\n_{E_i}E_j,\n_{E_i}\eta}\right)\\
&=& -\sum_{i=1}^n \left(\forma{\n_{E_i}[E_i,E_j],\eta}+\forma{\n_{E_i}\n_{E_j}E_i,\eta}+2\forma{\n_{E_i}E_j,\n_{E_i}^\perp\eta}\right)\\
&=& -\sum_{i=1}^n \left(\forma{\n_{E_i}\n_{E_j}E_i,\eta}\right)-2{\rm tr}\left(S_{\np_{(\cdot)}\eta}(E_j)\right)\\
&=& \sum_{i=1}^n \left( \forma{R(E_i,E_j)E_i,\eta} -\forma{\n_{E_j}\n_{E_i}E_i,\eta}\right)-2
{\rm tr}\left(S_{\np_{(\cdot)}\eta}(E_j)\right)
\\
&=& \sum_{i=1}^n \left( \forma{R(E_i,E_j)E_i,\eta} -E_j\forma{\n_{E_i}E_i,\eta}+\forma{\n_{E_i}E_i,\n_{E_j}\eta}\right)-\\
& & 2{\rm tr}\left(S_{\np_{(\cdot)}\eta}(E_j)\right)\\
&=&  {\rm Ric}_M(\eta,E_j)-n\forma{{\rm grad}\forma{\h,\eta},E_j}+n\forma{\h,\n_{E_j}\eta}-2{\rm tr}\left(S_{\np_{(\cdot)}\eta}(E_j)\right).
\end{eqnarray*}
\end{footnotesize}
The lemma is then proved by observing that $p$ is arbitrary and writing $X$ on the basis $\{ E_j\}.$ 
 \end{proof}

\begin{corol}\label{paral1}
Let $M$ be an $n$-dimensional  manifold immersed in a Riemannian manifold $N$ and assume that $\eta$ is a parallel normal section in the normal connection of $M.$ Then, 
$$ \forma{\n^2 \eta,X}= {\rm Ric}_M (\eta, X)-n\forma{{\rm grad}\forma{\overrightarrow{H},\eta},X},$$
for all $X\in\mathcal{T}(M).$ In particular,  if $M$ is a two-sided hypersurface of $N$ and $\eta$ is a unit normal section then
 $$ \forma{\n^2 \eta,X}= {\rm Ric}(\eta, X)-n\forma{{\rm grad}H,X},$$
where  $H$ is the mean curvature of $M$ with respect to $\eta$  and ${\rm Ric}(\cdot, \cdot)$ is the Ricci tensor of $N.$
\end{corol}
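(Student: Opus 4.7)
The plan is to specialize Lemma~\ref{Tangentpart} under the parallelism hypothesis and then further collapse to the two-sided hypersurface case. Lemma~\ref{Tangentpart} gives
\[
\forma{\n^2\eta,X}={\rm Ric}_M(\eta,X)-n\forma{{\rm grad}\forma{\h,\eta},X}+n\forma{\h,\n_X\eta}-2\,{\rm tr}\bigl(S_{\np_{(\cdot)}\eta}(X)\bigr),
\]
so I only need to show that the third and fourth terms on the right-hand side vanish under the assumption $\np\eta\equiv 0$.

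The fourth term vanishes immediately: if $\np_Y\eta=0$ for every tangent field $Y$, then $S_{\np_{(\cdot)}\eta}$ is identically the zero endomorphism, whose trace (evaluated against $X$) is zero. For the third term I would invoke the Weingarten decomposition $\n_X\eta=-S_\eta(X)+\np_X\eta$; parallelism forces $\n_X\eta=-S_\eta(X)$, which is tangent to $M$, whereas $\h$ is normal, so $\forma{\h,\n_X\eta}=0$. Combining these two observations with the displayed identity yields the general formula claimed in the corollary.

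For the hypersurface specialization, two-sidedness together with $|\eta|=1$ gives $\h=H\eta$ and hence $\forma{\h,\eta}=H$, so the middle term collapses to $-n\forma{{\rm grad}\,H,X}$. It remains to identify ${\rm Ric}_M(\eta,X)$ with the ambient Ricci tensor ${\rm Ric}(\eta,X)$ of $N$: extending a local orthonormal tangent frame $\{E_1,\dots,E_n\}$ by $\eta$ to an orthonormal frame of $TN$ along $M$, and computing the ambient Ricci trace at the pair $(\eta,X)$, the only extra contribution compared to ${\rm Ric}_M(\eta,X)$ is the diagonal term $\forma{R(\eta,\eta)X,\eta}$, which vanishes by antisymmetry of the curvature tensor in its first two arguments.

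There is no genuine obstacle: the corollary is a routine specialization of Lemma~\ref{Tangentpart}. The only mildly nontrivial points are the Weingarten-decomposition argument used to kill the $n\forma{\h,\n_X\eta}$ term and the trivial one-term curvature computation that identifies ${\rm Ric}_M$ with the ambient Ricci along normal--tangent pairs in the hypersurface case.
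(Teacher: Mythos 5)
Your proposal is correct and coincides with what the paper intends: the corollary is stated without proof precisely because it is the direct specialization of Lemma~\ref{Tangentpart} you carry out, with the term $n\forma{\h,\n_X\eta}$ killed by the Weingarten decomposition (parallelism makes $\n_X\eta=-S_\eta(X)$ tangent while $\h$ is normal) and the trace term killed by $\np\eta\equiv 0$. The hypersurface specialization, including the one-line identification of ${\rm Ric}_M(\eta,X)$ with the ambient Ricci tensor via the vanishing of $\forma{R(\eta,\eta)X,\eta}$, is also as the paper implicitly assumes.
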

 
In the next lemma we relate the normal part of the rough Laplacian of a unit normal section $\eta$ parallel in the normal connection with the Simons operator of $\eta.$ 

\begin{lem}\label{n2eta}
Let $M$ be a manifold immersed in a Riemannian manifold $N$ and assume that $\eta$ is a parallel normal section in the normal connection of $M.$ Then, 
\begin{eqnarray}
(\n^2\eta)^\perp=-\tilde{\mathcal{B}}(\eta)
\end{eqnarray}
 \end{lem}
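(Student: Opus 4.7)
The proof will be a direct pointwise computation using the definitions of the rough Laplacian, the normal connection, and the Simons operator. Here is the plan.

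I would fix $p \in M$ and choose a local orthonormal frame $\{E_1,\dots,E_n\}$ that is geodesic at $p$, so that $(\nabla^\top_{E_i} E_i)(p)=0$. This reduces the rough Laplacian at $p$ to the clean expression $\nabla^2 \eta = \sum_i \nabla_{E_i}\nabla_{E_i}\eta$. The central input is the Weingarten formula, $\nabla_X \eta = -S_\eta(X) + \nabla^\perp_X \eta$, combined with the hypothesis that $\eta$ is parallel in the normal connection, which forces $\nabla^\perp_X \eta = 0$ for every tangent $X$. Consequently $\nabla_{E_i}\eta = -S_\eta(E_i) \in \mathcal T(M)$.

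Next I would differentiate once more and extract the normal component. Since $S_\eta(E_i)$ is tangent, $\nabla_{E_i} S_\eta(E_i)$ decomposes via Gauss's formula as a tangent part plus the second fundamental form $\alpha(E_i, S_\eta(E_i)) \in TM^\perp$ (the vector-valued second fundamental form of the immersion). Taking normal projections gives
\begin{equation*}
(\nabla^2 \eta)^\perp(p) \;=\; -\sum_{i=1}^n \alpha\bigl(E_i, S_\eta(E_i)\bigr).
\end{equation*}

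It remains to identify the right-hand side with $\tilde{\mathcal B}(\eta)=\mathcal{B}^\ast \mathcal{B}(\eta)$. For this I would test against an arbitrary $\zeta \in \mathcal N(M)$: using $\langle \alpha(X,Y),\zeta\rangle = \langle S_\zeta(X),Y\rangle$, we have
\begin{equation*}
\Bigl\langle \sum_i \alpha(E_i, S_\eta(E_i)),\,\zeta\Bigr\rangle
= \sum_i \langle S_\zeta(E_i), S_\eta(E_i)\rangle
= \langle S_\eta, S_\zeta\rangle_{HS}
= \langle \mathcal{B}(\eta),\mathcal{B}(\zeta)\rangle
= \langle \mathcal{B}^\ast \mathcal{B}(\eta),\zeta\rangle,
\end{equation*}
where the Hilbert--Schmidt metric introduced in Section~\ref{P} is precisely what matches the sum of inner products. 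Since $\zeta$ is arbitrary, $\sum_i \alpha(E_i, S_\eta(E_i)) = \tilde{\mathcal B}(\eta)$, and the claimed identity $(\nabla^2\eta)^\perp = -\tilde{\mathcal B}(\eta)$ follows. As $p$ was arbitrary, this holds on all of $M$.

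There is no serious obstacle here; the parallelism hypothesis is doing all the work, since it collapses $\nabla_{E_i}\eta$ to a purely tangent vector $-S_\eta(E_i)$, after which a single further differentiation plus the definition of $\mathcal B^\ast$ via the Hilbert--Schmidt pairing finishes the argument. The only delicate point is to keep track of signs (the minus from Weingarten is what produces the minus in front of $\tilde{\mathcal B}(\eta)$) and to remember that the geodesic-frame computation is only pointwise, so one must not differentiate $E_i$'s outside $p$.
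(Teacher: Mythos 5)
Your proof is correct and follows essentially the same route as the paper: a pointwise orthonormal-frame computation in which parallelism collapses $\nabla_{E_i}\eta$ to $-S_\eta(E_i)$, and the normal part of the second derivative is identified with $\tilde{\mathcal B}(\eta)$ via the Hilbert--Schmidt pairing. The paper merely packages the same calculation by differentiating the identity $\forma{\n_{E_i}\eta,\nu}=0$ against a normal test vector $\nu$ instead of invoking the Gauss formula for $\alpha(E_i,S_\eta(E_i))$ explicitly; the two presentations are interchangeable.
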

 
 \begin{proof}
Let $\{E_1,\cdots, E_n\}$ be a local orthonormal frame of $M$ and let  $\nu$ be a normal vector to $M.$ Then differentiating 
 $$\forma{\n_{E_i}\eta,\nu}=\forma{\n_{E_i}^{\perp}\eta,\nu}=0$$
  with respect to   $E_i$ and summing up   we have 
\begin{eqnarray*}
0&=&\sum_{i=1}^n\left(\forma{\n_{E_i}\n_{E_i}\eta,\nu}+\forma{\n_{E_i}\eta,\n_{E_i}\nu}\right)\\
&=& \sum_{i=1}^n\left(\forma{\n_{E_i}\n_{E_i}\eta,\nu}+\forma{S_\eta(E_i),S_\nu(E_i)}\right)\\
&=&  \forma{\n^2\eta,\nu}+\forma{S_\eta,S_\nu}\\
&=& \forma{\n^2\eta,\nu}+\forma{\mathcal{B}(\eta),\mathcal{B}(\nu)}\\
&=& \forma{\n^2\eta,\nu}+\forma{\mathcal{B}^*\mathcal{B}(\eta),\nu)}\\
&=& \forma{\n^2\eta,\nu}+\forma{\tilde{\mathcal{B}}(\eta),\nu)}
\end{eqnarray*}
that is, $\forma{\n^2\eta,\nu}=-\forma{\tilde{\mathcal{B}}(\eta),\nu)}$ then 
\begin{eqnarray*}
(\n^2\eta)^\perp=-\tilde{\mathcal{B}}(\eta).
\end{eqnarray*} 
 \end{proof}
 
In the next result we obtain a formula for  the normal part of the rough Laplacian of a Killing vector field.

\begin{lem}\label{lem1}   
Let $M$ be an $n-$dimensional manifold immersed in a Riemannian manifold $N.$ If $V$ is a Killing vector field of $N$ and $\eta$ a normal section of $M$ then it holds the equality
\begin{eqnarray}\label{a}
-\forma{\n^2 V,\eta}={\rm Ric}_M(\eta,V)+n\forma{\overrightarrow{H},\n_{\eta}V}.
\end{eqnarray}
\end{lem}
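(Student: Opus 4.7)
The plan is to exploit two key facts about a Killing field $V$ on $N$: (i) the Killing identity $\n_X\n_Y V-\n_{\n_X Y}V=\mathrm{R}(V,X)Y$, which lets us turn second derivatives of $V$ into curvature, and (ii) the skew-symmetry of $\n V$, i.e.\ $\forma{\n_X V,Y}+\forma{X,\n_Y V}=0$, which will convert a term of the form $\forma{\n_{\vec H}V,\eta}$ into $-\forma{\vec H,\n_\eta V}$.

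First I would fix $p\in M$ and choose a local orthonormal frame $\{E_1,\dots,E_n\}$ of $M$ that is geodesic at $p$, so that $\n_{E_i}^\top E_i=0$ at $p$ and hence
\begin{equation*}
(\n^2 V)(p)=\sum_{i=1}^n \n_{E_i}\n_{E_i}V.
\end{equation*}
Applying the Killing identity with $X=Y=E_i$ yields
\begin{equation*}
\n_{E_i}\n_{E_i}V=\mathrm{R}(V,E_i)E_i+\n_{\n_{E_i}E_i}V.
\end{equation*}
Splitting $\n_{E_i}E_i=\n_{E_i}^\top E_i+(\n_{E_i}E_i)^\perp$ and using that the first summand vanishes at $p$ while $\sum_i(\n_{E_i}E_i)^\perp=n\h$, I get
\begin{equation*}
\n^2 V=\sum_{i=1}^n \mathrm{R}(V,E_i)E_i+n\,\n_{\h}V.
\end{equation*}

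Next I pair with $\eta$ and handle the curvature sum using the standard pair symmetries $\forma{\mathrm{R}(A,B)C,D}=\forma{\mathrm{R}(C,D)A,B}$ and antisymmetry in the first slot, obtaining
\begin{equation*}
\sum_{i=1}^n\forma{\mathrm{R}(V,E_i)E_i,\eta}=-\sum_{i=1}^n\forma{\mathrm{R}(\eta,E_i)V,E_i}=-{\rm Ric}_M(\eta,V),
\end{equation*}
where the last equality is the definition of ${\rm Ric}_M$ given in Section~\ref{P}.

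Finally I apply the Killing equation to the remaining term: $\forma{\n_{\h}V,\eta}=-\forma{\h,\n_\eta V}$. Combining everything gives $\forma{\n^2 V,\eta}=-{\rm Ric}_M(\eta,V)-n\forma{\h,\n_\eta V}$, which is equivalent to \eqref{a}. The only step that requires some care is getting the sign of the curvature contribution right, but this is purely bookkeeping with the symmetries of $\mathrm R$; the substantive inputs are just the Killing identity and the Killing equation, together with the geodesic frame trick to deal with the tangential part of $\n_{E_i}E_i$.
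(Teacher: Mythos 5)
Your proof is correct, but it follows a different route from the paper's. You invoke the second-order Killing identity $\n_X\n_Y V-\n_{\n_X Y}V=\mathrm{R}(V,X)Y$ as a known fact and obtain from it the pointwise vector identity $\n^2 V=\sum_i \mathrm{R}(V,E_i)E_i+n\n_{\h}V$, which you then pair with $\eta$; the paper instead works only from the first-order Killing equation $\forma{\n_X V,Y}+\forma{X,\n_Y V}=0$, integrating by parts twice in $\forma{\n_{E_i}\n_{E_i}V,\eta}$ and using the definition of $\mathrm R$ to commute $\n_{E_i}$ and $\n_\eta$ — in effect re-deriving the special case of the second-order identity it needs, at the cost of extending the frame $\{E_i\}$ parallelly along normal geodesics to make sense of $\n_\eta E_i$. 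Your argument is shorter, avoids that extension, and actually proves more (a formula for the full vector $\n^2 V$, not just its pairing with a normal section), at the price of importing the Kostant-type identity, whose proof requires the first Bianchi identity. Two points of care, both of which you handled correctly but which deserve an explicit word in a write-up: the paper's curvature convention is $\mathrm{R}(X,Y)Z=\n_Y\n_X Z-\n_X\n_Y Z+\n_{[X,Y]}Z$, the opposite of the convention in which the Killing identity is usually stated as $\n^2_{X,Y}V=-\mathrm{R}(V,X)Y$, so your sign is the right one here (one can confirm this against Example 3.2, where your intermediate formula reproduces $\n^2V=n\n_{\h}V-nV+V^{\top_M}$ on $\mathbb{S}^m$); and the identification $\sum_i\forma{\mathrm{R}(V,E_i)E_i,\eta}=-{\rm Ric}_M(\eta,V)$ uses the pair symmetry plus antisymmetry exactly as you say, matching the paper's definition ${\rm Ric}_M(\eta,V)=\sum_i\forma{\mathrm{R}(\eta,E_i)V,E_i}$.
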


\begin{proof}
 Choose $p\in M$ and let $\{E_1,\ \cdots,\ E_n\}$ be a local orthonormal frame of $M$ geodesic  at $p.$  We extend  $\{E_i\}$ to an open set of $N$ parallelly along the geodesics orthogonal to $M.$  Since $V$ is a Killing vector field we have at $p$
\begin{eqnarray*} 
\forma{\n^2 V, \eta} &=&\sum_{i=1}^n \forma{\n_{E_i}\n_{E_i}V, \eta}\\
&=& \sum_{i=1}^n \left( E_i\forma{\n_{E_i}V,\eta}-\forma{\n_{E_i}V,\n_{E_i}\eta}\right)\\
&=& -\sum_{i=1}^n\left(\forma{\n_{\eta}V,\n_{E_i}E_i}+\forma{\n_{E_i}\n_{\eta}V, E_i}+\forma{\n_{E_i}\eta,\n_{E_i}V}\right).
\end{eqnarray*} 
For the second term of the last expression we obtain
 \begin{eqnarray}\label{eqL1_1}
\sum_{i=1}^n \forma{\n_{E_i}\n_{\eta}V,E_i}=-n\forma{\h,\n_{\eta}V}-\forma{\n^2 V,\eta}-\sum_{i=1}^n \forma{\n_{E_i}\eta,\n_{E_i}V}. \end{eqnarray}
On the other hand, since  $V$ is a Killing vector field of $N$  we have $\forma{\n_{E_i} V,E_i}=0,$    then
\begin{eqnarray}\label{eq0}
0 =\forma{\n_{\eta}\n_{E_i} V,E_i}(p)+\forma{\n_{E_i} V,\n_{\eta}E_i}(p),
\end{eqnarray}
and noting that  $\n_{\eta}E_i(p)=0$ since  $E_i$ is parallel along $\eta,$ we have
 \begin{eqnarray*}
0 =\forma{\n_{E_i} V,\n_{\eta}E_i}(p),
\end{eqnarray*}
then from equation \eqref{eq0} we obtain that $\forma{\n_{\eta}\n_{E_i} V,E_i}(p)=0.$ Thus,
\begin{eqnarray*}
\forma{\rm{R}(\eta,E_i)V,E_i}(p) &=&\forma{\n_{E_i}\n_{\eta}V,E_i}-\forma{\n_{\eta}\n_{E_i} V,E_i}+\forma{\n_{[\eta,E_i]}V,E_i}\\
&=& \forma{\n_{E_i}\n_{\eta}V,E_i}-\forma{\n_{E_i}V,[\eta,E_i]}\\
&=& \forma{\n_{E_i}\n_{\eta}V,E_i}+\forma{\n_{E_i}V,\n_{E_i}\eta},
\end{eqnarray*}
that is, 
\begin{eqnarray}\label{eqL1_2}
\forma{E_i,\n_{E_i}\n_{\eta}V}=\forma{\rm{R}(\eta,E_i)V,E_i}-\forma{\n_{E_i}V,\n_{E_i}\eta}.
\end{eqnarray}
 Thus, from (\ref{eqL1_1}) and (\ref{eqL1_2}) we obtain (\ref{a}).
 
\end{proof}

\begin{corol}
 Let $M$ be  a minimal submanifold of a Riemannian ma\-ni\-fold $N$, $\eta$ a normal section of $M$  and $V$ a Killing vector field of $N.$ Then it holds the equality
\begin{eqnarray*}
\forma{\n^2 V,\eta}=-{\rm Ric}_M(\eta,V).
\end{eqnarray*}
\end{corol}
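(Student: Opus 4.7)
The plan is to derive this corollary as a direct specialization of Lemma~\ref{lem1}. That lemma gives the identity
\[
-\forma{\n^2 V,\eta}={\rm Ric}_M(\eta,V)+n\forma{\overrightarrow{H},\n_{\eta}V}
\]
for any normal section $\eta$ of $M$ and any Killing field $V$ of $N$, without any further assumption on $M$. The minimality hypothesis enters only through the mean curvature vector $\overrightarrow{H}$.

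Concretely, I would simply substitute $\overrightarrow{H}=0$, which is the defining condition of a minimal submanifold, into the right-hand side above. The term $n\forma{\overrightarrow{H},\n_{\eta}V}$ vanishes identically, leaving $-\forma{\n^2 V,\eta}={\rm Ric}_M(\eta,V)$, and multiplying through by $-1$ yields the claimed equality. No new computation is required, and no use is made of the specific ambient geometry of $N$ beyond what is already built into Lemma~\ref{lem1}.

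Because the proof is a one-line specialization, there is really no obstacle to overcome; the only care needed is to observe that Lemma~\ref{lem1} was stated in full generality for an arbitrary Riemannian target $N$, so the conclusion of the corollary holds in the same generality. In particular, no assumption on $\eta$ (such as being unit or parallel in the normal connection) is required, consistent with the statement.
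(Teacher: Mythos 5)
Your proposal is correct and is exactly the intended argument: the paper states this corollary immediately after Lemma~\ref{lem1} precisely because it follows by setting $\overrightarrow{H}=0$ in equation \eqref{a}, which kills the term $n\forma{\overrightarrow{H},\n_{\eta}V}$. Your observation that no extra hypothesis on $\eta$ (unit length or parallelism) is needed is also consistent with the lemma and the statement.
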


\begin{lem}\label{lemma1}
Let $M$ be an $n$-dimensional manifold immersed in a Riemannian manifold $N.$ Let $V$ be a Killing field of $N$ and $\eta$ a normal section of $M.$ Then  one has the following formula for the Laplacian of the function $f=\forma{\eta,V}$: 
\begin{footnotesize}
\begin{equation}\label{lemma}
\begin{array}{rcl}
\Delta_M f &=&\forma{\left(\n^2 \eta\right)^\perp,V}-\forma{{\rm Ric}_M^\perp(\eta),V}-n\forma{{\rm grad}\forma{\overrightarrow{H},\eta},V}+ n\forma{\overrightarrow{H},\n_{V^\top}\eta}-\\ && n\forma{\overrightarrow{H},\n_\eta V}+   2\forma{\n\eta,\n V}-2{\rm tr}\left(S_{\np_{(\cdot)}\eta}(V^\top)\right). \end{array}
\end{equation}
\end{footnotesize}
\end{lem}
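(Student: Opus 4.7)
The plan is to compute $\Delta_M f$ directly at an arbitrary point $p\in M$ using a local orthonormal frame $\{E_1,\dots,E_n\}$ of $M$ that is geodesic at $p$. With such a frame, the rough Laplacian of any vector field $W$ along $M$ reduces at $p$ to $\sum_i \n_{E_i}\n_{E_i}W$, because the correction term involving $\n_{E_i}^\top E_i$ vanishes at $p$; the same holds for $\Delta_M f = \sum_i E_i E_i f$. This will make the computation amount to two Leibniz expansions followed by invocation of the two previous lemmas.

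First I would apply the Leibniz rule twice to $f=\forma{\eta,V}$: at $p$,
\[
\Delta_M f \;=\; \forma{\n^2\eta,\,V}\;+\;2\forma{\n\eta,\n V}\;+\;\forma{\eta,\n^2 V}.
\]
Since $p$ is arbitrary and both sides are independent of the frame, this identity holds globally. Next I decompose $V = V^\top + V^\perp$. The middle inner product splits as
\[
\forma{\n^2\eta,V} \;=\; \forma{(\n^2\eta)^\perp,V}\;+\;\forma{\n^2\eta,V^\top},
\]
using that $(\n^2\eta)^\perp$ pairs only with $V^\perp$. For the tangential piece I apply Lemma~\ref{Tangentpart} with $X=V^\top$, noting that ${\rm grad}\forma{\h,\eta}$ is tangent so $\forma{{\rm grad}\forma{\h,\eta},V^\top}=\forma{{\rm grad}\forma{\h,\eta},V}$. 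For the third term $\forma{\eta,\n^2 V}$ I apply Lemma~\ref{lem1} (which uses that $V$ is Killing) to get
\[
\forma{\eta,\n^2 V} \;=\; -{\rm Ric}_M(\eta,V)\;-\;n\forma{\h,\n_\eta V}.
\]

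Finally I combine everything. The Ricci terms arising from Lemma~\ref{Tangentpart} and Lemma~\ref{lem1} are ${\rm Ric}_M(\eta,V^\top) - {\rm Ric}_M(\eta,V)$, and by bilinearity of ${\rm Ric}_M$ this equals $-{\rm Ric}_M(\eta,V^\perp)$. By the very definition of the normal Ricci operator, ${\rm Ric}_M(\eta,V^\perp)=\forma{{\rm Ric}_M^\perp(\eta),V^\perp}=\forma{{\rm Ric}_M^\perp(\eta),V}$ (the last equality because ${\rm Ric}_M^\perp(\eta)$ is normal). Substituting back produces exactly formula~\eqref{lemma}. The proof is essentially a bookkeeping argument: there is no genuine obstacle, the only point requiring care is keeping track of the tangential/normal decomposition of $V$ so that the two Ricci contributions assemble into $-\forma{{\rm Ric}_M^\perp(\eta),V}$ rather than leaving a residual tangential Ricci term.
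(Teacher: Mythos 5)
Your proposal is correct and follows essentially the same route as the paper's proof: the same Leibniz expansion $\Delta_M f = \forma{\n^2\eta,V}+2\forma{\n\eta,\n V}+\forma{\eta,\n^2 V}$ in a geodesic frame, the same decomposition $V=V^\top+V^\perp$, and the same invocations of Lemma~\ref{Tangentpart} and Lemma~\ref{lem1}, with the two Ricci contributions assembled into $-\forma{{\rm Ric}_M^\perp(\eta),V}$ exactly as in the paper's equation~\eqref{prop14}.
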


\begin{proof}
Given $p\in M,$ let $\{E_1,\cdots,E_n\}$ be a local orthonormal basis of $M$ geodesic at $p.$ Then, at $p,$
$$\Delta_M f=\sum_{i=1}^n E_i(E_i(f)).$$ 
Note that
\begin{eqnarray*}
\Delta_M f&=& \forma{\n^2\eta,V}+2\forma{\n \eta,\n V}+\forma{\eta,\n^2 V}.
\end{eqnarray*}
 From this equation and the Lemma~\ref{lem1}, we have
\begin{small}
\begin{equation}\label{prop1eq1}
\begin{array}{rcl}
\Delta_M f&=&\forma{\n^2\eta,V}+2\forma{\n \eta,\n V}-{\rm Ric}_M(\eta,V)-n\forma{\overrightarrow{H},\n_{\eta}V}.
\end{array}
\end{equation}
\end{small}
Writing  $V=V^\perp+V^\top$  we have that
\begin{eqnarray}\label{prop13}
\forma{\n^2\eta,V}&=&\forma{\n^2\eta,V^\top}+\forma{(\n^2\eta)^\perp,V}
\end{eqnarray} 
and 
\begin{eqnarray}\label{prop14}
{\rm Ric}_M(\eta, V^\top)&=&{\rm Ric}_M (\eta, V)-\forma{{\rm Ric}_M ^\perp(\eta), V}. 
\end{eqnarray}
Recall  from Lemma~\ref{Tangentpart} that  
\begin{footnotesize}
\begin{equation}\label{prop1eq2}
\begin{array}{rcl}
\forma{\n^2\eta,V^\top}&=&{\rm Ric}_M (\eta, V^\top)-n\forma{{\rm grad}\forma{\overrightarrow{H},\eta},V^\top}+ n\forma{\overrightarrow{H},\n_{V^\top}\eta}-\\&& 2{\rm tr}\left(S_{\np_{(\cdot)}\eta}(V^\top)\right). \end{array}
\end{equation}
\end{footnotesize}
Then, from equations \eqref{prop1eq1},  \eqref{prop13}, \eqref{prop14} and \eqref{prop1eq2} we obtain \eqref{lemma}.

\end{proof}

As a consequence of Lemma~\ref{lemma1} we obtain the following useful formula.

\begin{corol}\label{corol2}  Let $M$ be an $n-$dimensional manifold immersed in a Rie\-mannian manifold $N,$  $V$ a Killing vector field of $M$ and  $\eta$ a parallel unit normal section of $M.$ If $f:M\to \mathbb{R}$ is given by
 $$f(q)=\forma{\eta(q),V(q)},\ q\in M,$$
  then 
\begin{footnotesize}
\begin{equation}\label{useful}
-\Delta_M f = n\forma{{\rm grad}\forma{\overrightarrow{H},\eta},V}+n\forma{\overrightarrow{H},\n_\eta V} +\forma{\tilde{\mathcal{B}}(\eta),V}+\forma{{\rm Ric}^\perp_M(\eta),V},
\end{equation} 
\end{footnotesize}
where $\overrightarrow{H}$ is the mean curvature vector of $M.$ 
\end{corol}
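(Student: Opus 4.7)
The plan is to start from the general formula in Lemma~\ref{lemma1} and use the two hypotheses (parallelism of $\eta$ in the normal connection, and the Killing property of $V$) to kill four of the terms on the right-hand side, leaving precisely the claimed expression.

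First, I would recall the identity
\[
\Delta_M f = \forma{(\n^2\eta)^\perp,V} - \forma{{\rm Ric}_M^\perp(\eta),V} - n\forma{{\rm grad}\forma{\h,\eta},V} + n\forma{\h,\n_{V^\top}\eta} - n\forma{\h,\n_\eta V} + 2\forma{\n\eta,\n V} - 2\,{\rm tr}\bigl(S_{\np_{(\cdot)}\eta}(V^\top)\bigr),
\]
from Lemma~\ref{lemma1}. The parallelism hypothesis $\np\eta\equiv 0$ immediately eliminates the last term, and by Weingarten's formula it makes $\n_X\eta = -S_\eta(X)$ purely tangent for every tangent $X$; consequently $\forma{\h,\n_{V^\top}\eta}=0$ since $\h$ is normal. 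Finally, Lemma~\ref{n2eta} rewrites $\forma{(\n^2\eta)^\perp,V} = -\forma{\tilde{\mathcal B}(\eta),V}$.

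The one step that is not purely substitution is showing that $\forma{\n\eta,\n V}$ vanishes. For this I would fix a local orthonormal frame $\{E_i\}$ of $M$ and compute
\[
\forma{\n\eta,\n V} = \sum_{i} \forma{\n_{E_i}\eta,\n_{E_i}V} = -\sum_{i,j} \forma{S_\eta(E_i),E_j}\forma{E_j,\n_{E_i}V},
\]
where I used $\n_{E_i}\eta=-S_\eta(E_i)$ (tangent) in the first factor so that only the tangential part of $\n_{E_i}V$ contributes. The coefficients $\forma{S_\eta(E_i),E_j}$ are symmetric in $i,j$ since $S_\eta$ is self-adjoint, while $\forma{E_j,\n_{E_i}V}$ is skew in $i,j$ because $V$ is a Killing field; the double sum therefore vanishes. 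This is the only slightly non-trivial step, and I expect it to be the main obstacle in a reader's verification.

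Assembling all this in Lemma~\ref{lemma1} and negating both sides yields
\[
-\Delta_M f = n\forma{{\rm grad}\forma{\h,\eta},V} + n\forma{\h,\n_\eta V} + \forma{\tilde{\mathcal B}(\eta),V} + \forma{{\rm Ric}_M^\perp(\eta),V},
\]
which is the desired formula.
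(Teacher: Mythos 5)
Your proof is correct and follows essentially the same route as the paper: substitute the hypotheses into Lemma~\ref{lemma1}, let parallelism kill the $\np\eta$ terms, and invoke Lemma~\ref{n2eta} to replace $(\n^2\eta)^\perp$ by $-\tilde{\mathcal{B}}(\eta)$. The only cosmetic difference is in the step $\forma{\n\eta,\n V}=0$: the paper chooses a frame diagonalizing $S_\eta$ and uses $\forma{\n_{E_i}V,E_i}=0$, while you contract the symmetric matrix of $S_\eta$ against the skew-symmetric matrix of $\n V$ — the same cancellation, seen frame-independently.
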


\begin{proof} 
Given $p\in M$ let   $\{E_1(p),\ \cdots,\ E_n(p)\}$ be a basis of the tangent space $T_pM$ that diagonalizes the second fundamental form of $M$ determined by $\eta.$ Then, since $\eta$ is parallel in the normal connection of $M$ and $V$ is a Killing vector field of $N$ the following terms in the equation~\eqref{lemma} are zero: 
 $$\forma{\n\eta,\n V}= \sum_{i=1}^n\forma{S_\eta(E_i),\n_{E_i} V}=0,$$
\begin{eqnarray*}
\forma{\overrightarrow{H},\n_{V^\top}\eta}= \forma{\overrightarrow{H},\n_{V^\top}^\perp\eta}=0
\end{eqnarray*}
and 
$${\rm tr}\left(S_{\np_{(\cdot)}\eta}(V^\top)\right)=0$$
thus, from  Lemma~\ref{lemma1} we have
\begin{equation*}\label{temp}
\Delta_M f = \forma{\left(\n^2\eta\right)^\perp,V}-\forma{{\rm Ric}_M^\perp(\eta),V}-n\forma{{\rm grad}\forma{\overrightarrow{H},\eta},V}-n \forma{\overrightarrow{H},\n_\eta V},
\end{equation*}
which together with the Lemma~\ref{n2eta} gives \eqref{useful}.
\end{proof}

\section{Harmonic unit normal sections}\label{HUNS}

In this section we introduce the concept of harmonic unit normal sections of a submanifold  $M$ of a Riemannian manifold $N$  and prove that unit normal sections of principal orbits of polar actions provide a family of examples of harmonic unit normal sections. These examples are interesting on its own and important in our applications.

Let $M$ be a compact manifold  immersed in a Riemannian manifold $N.$ Herein,  we assume that $\mathcal{N}_1(M)$ is nonempty.  We define the energy functional  $\mathcal{N}:\mathcal{N}_1(M)\to \mathbb{R}$  by 
\begin{eqnarray*}
\mathcal{N}(\eta)=\int_M \norma{\n \eta}^2.
\end{eqnarray*}
We say that a unit normal section of $M$ is harmonic if it is a critical point of $\mathcal{N}.$ If $M$ is not compact a unit normal section $\eta$ of $M$ is harmonic if $\eta$ is a critical point of $\mathcal{N}$   on any compact subdomain of $M.$ 

Taking variations of a unit normal section $\eta$  on $\mathcal N_1(M)$ we  obtain:

\begin{prop}\label{husP}
Let $M$ be a compact manifold  immersed in a Riemannian manifold $N.$  A unit normal section $\eta \in \mathcal{N}_1(M)$  is  harmonic  if and only if 
\begin{eqnarray}\label{e0}
(\n^2 \eta)^\perp=-\norma{\n \eta}^2 \eta.
\end{eqnarray}
\end{prop}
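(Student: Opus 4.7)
The plan is to carry out a standard first variation calculation, with two main ingredients: (i) computing the differential of $\mathcal{N}$ on admissible variations of $\eta$ in $\mathcal{N}_1(M)$, and (ii) identifying the Lagrange multiplier imposed by the unit-length constraint $\forma{\eta,\eta}=1$.

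First I would take a smooth one-parameter family $\eta_t \in \mathcal{N}_1(M)$ with $\eta_0=\eta$ and variation field $V:=\partial_t \eta_t|_{t=0}$; since $|\eta_t|\equiv 1$, the constraint forces $V$ to be a section of $TM^\perp$ satisfying $\forma{V,\eta}\equiv 0$, and conversely every such $V$ arises from an admissible variation (for instance via the geodesic flow on the unit sphere bundle of $TM^\perp$). Differentiating under the integral sign and interchanging $\partial_t$ with $\n_{E_i}$ (which is legitimate because $\n$ is torsion-free on the pullback bundle over $M\times(-\epsilon,\epsilon)$) yields
\begin{equation*}
\frac{d}{dt}\bigg|_{t=0}\mathcal{N}(\eta_t)=2\int_M\sum_i \forma{\n_{E_i}V,\n_{E_i}\eta},
\end{equation*}
where $\{E_i\}$ is a local orthonormal frame of $M$.

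Next I would integrate by parts. Fixing $p\in M$ and choosing $\{E_i\}$ geodesic at $p$, the vector field $Z:=\sum_i\forma{V,\n_{E_i}\eta}E_i$ on $M$ satisfies $(\mathrm{div}_M Z)(p)=\sum_i E_i\forma{V,\n_{E_i}\eta}(p)$, and since the correction term $\n_{\n_{E_i}^\top E_i}\eta$ vanishes at $p$ in this frame, one obtains the pointwise identity $\sum_i\forma{\n_{E_i}V,\n_{E_i}\eta}=\mathrm{div}_M Z-\forma{V,\n^2\eta}$. Integrating over the closed manifold $M$, the divergence term vanishes, so the first variation reduces to $-2\int_M\forma{V,\n^2\eta}=-2\int_M\forma{V,(\n^2\eta)^\perp}$ (using that $V$ is normal). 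Thus $\eta$ is a critical point if and only if $\int_M\forma{V,(\n^2\eta)^\perp}=0$ for every normal section $V$ with $\forma{V,\eta}=0$, which by the fundamental lemma of the calculus of variations is equivalent to the existence of a function $\lambda$ on $M$ with $(\n^2\eta)^\perp=\lambda\eta$.

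The final step is to identify $\lambda$. Differentiating $\forma{\eta,\eta}=1$ twice along $E_i$ yields $\forma{\n_{E_i}\n_{E_i}\eta,\eta}=-|\n_{E_i}\eta|^2$, while $\forma{\n_{\n_{E_i}^\top E_i}\eta,\eta}=\tfrac{1}{2}(\n_{E_i}^\top E_i)\forma{\eta,\eta}=0$, so summing gives $\forma{\n^2\eta,\eta}=-\norma{\n\eta}^2$. Pairing $(\n^2\eta)^\perp=\lambda\eta$ with $\eta$ therefore forces $\lambda=-\norma{\n\eta}^2$, proving \eqref{e0}; the converse is immediate since the expression $\langle V,-\norma{\n\eta}^2\eta\rangle$ vanishes pointwise for admissible $V$. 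The non-compact case follows by restricting variations to compactly supported ones, which does not alter the Euler--Lagrange equation. The only subtle point to get right is the boundary-free integration by parts together with the correct handling of the correction term in the definition of $\n^2$; the geodesic frame argument sketched above handles both simultaneously.
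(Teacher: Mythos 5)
Your proof is correct and follows essentially the same route as the paper: a first variation computation followed by integration by parts via the divergence of $\sum_i\forma{V,\n_{E_i}\eta}E_i$, leading to the Euler--Lagrange equation. The only cosmetic difference is that the paper uses the explicit normalized variation $\eta_N(t)=(\eta+tN)/\norma{\eta+tN}$ for an arbitrary normal section $N$, so the multiplier $-\norma{\n\eta}^2$ appears directly in the derivative of the energy, whereas you work with constrained variation fields and identify the Lagrange multiplier afterwards from $\forma{\n^2\eta,\eta}=-\norma{\n\eta}^2$.
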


\begin{proof}
Let $N\in \mathcal{N}(M)$ be a normal vector field of $M.$ Consider the variation $$\eta_N(t)=\frac{\eta+tN}{\norma{\eta+tN}}$$ and differentiating the energy $\mathcal{N}(\eta_N(t))$ at $t=0$ we have
$$\left.\frac{d}{dt}\right\vert_{{t = 0}}\mathcal{N}(\eta_N(t))=-2\int_M\forma{\norma{\n \eta}^2\eta,N}+2\int_M\forma{\n\eta,\n N}.$$
Then,  $\eta\in \mathcal{N}(M)$ is a critical point of $\mathcal{N}$ if and only if 
\begin{eqnarray}\label{e1}
\int_M\forma{\norma{\n \eta}^2\eta,N}=\int_M\forma{\n\eta,\n N}.
\end{eqnarray}
let $\{E_1,\cdots, E_n\}$ be a geodesic frame in a neighbourhood of a fixed point $p\in M.$ Defining the  tangent vector field $$X=\sum_{i=1}^n\forma{\n_{E_i}\eta,N}E_i$$
we have
\begin{eqnarray*}
{\rm div} X&=&\sum_{i=1}^n E_i\forma{\n_{E_i}\eta,N}\\
&=&\forma{\n^2\eta,N}+\forma{\n\eta,\n N}.
\end{eqnarray*}
Noting that this equality is independent of $p$ and integrating we have  
\begin{eqnarray}\label{e2}
\int_M\forma{\n^2\eta,N}=-\int_M\forma{\n\eta,\n N}.
\end{eqnarray}
Thus, from equations \eqref{e1} and \eqref{e2},  since $N$ is arbitrary in $\mathcal{N}(M)$ it follows \eqref{e0}.
\end{proof}

\begin{prop}\label{eighar} Let $\eta$ be a unit normal section of a manifold $M$ immersed in a Rie\-mannian manifold $N$ such that $\eta$ is parallel in the normal connection of $M$. Then, $\eta$ is a harmonic normal section of $M$ if and only if $\eta$ is an eigenvector of $\tilde{\mathcal{B}}.$
\end{prop}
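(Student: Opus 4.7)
The plan is to combine the variational characterization of harmonic unit normal sections in Proposition~\ref{husP} with the identification of $(\n^2\eta)^\perp$ given by Lemma~\ref{n2eta}, and then to observe that under the parallelism hypothesis $\|\n\eta\|^2$ can be rewritten in terms of the Simons operator itself.

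First I would recall from Proposition~\ref{husP} that $\eta$ is harmonic if and only if
\begin{equation*}
(\n^2 \eta)^\perp = -\|\n\eta\|^2\,\eta.
\end{equation*}
Since $\eta$ is assumed parallel in the normal connection, Lemma~\ref{n2eta} gives $(\n^2\eta)^\perp = -\tilde{\mathcal{B}}(\eta)$. Substituting, $\eta$ is harmonic if and only if
\begin{equation*}
\tilde{\mathcal{B}}(\eta) = \|\n\eta\|^2\,\eta,
\end{equation*}
which is already almost the desired statement: it says harmonicity is equivalent to $\eta$ being an eigenvector of $\tilde{\mathcal{B}}$ with the specific eigenvalue $\|\n\eta\|^2$.

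The remaining point is to verify that whenever $\eta$ is a unit eigenvector of $\tilde{\mathcal{B}}$, its eigenvalue is automatically $\|\n\eta\|^2$; this makes the characterization clean. Here I would use that parallelism forces $\n_X\eta = -S_\eta(X)$ for every $X\in\mathcal{T}(M)$, so that
\begin{equation*}
\|\n\eta\|^2 = \sum_{i=1}^n \|S_\eta(E_i)\|^2 = \langle S_\eta,S_\eta\rangle = \langle \tilde{\mathcal{B}}(\eta),\eta\rangle,
\end{equation*}
using the Hilbert--Schmidt identification of $\tilde{\mathcal{B}}$ recalled in Section~\ref{P}. Consequently, if $\tilde{\mathcal{B}}(\eta) = \lambda\eta$ with $\|\eta\|=1$, then $\lambda = \langle\tilde{\mathcal{B}}(\eta),\eta\rangle = \|\n\eta\|^2$, so the eigenvector condition and the harmonicity equation coincide.

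There is essentially no obstacle here; the proposition is a direct synthesis of the preceding lemmas once one notices that the eigenvalue is forced. The only subtle point worth flagging explicitly is that the variational equation from Proposition~\ref{husP} in principle only constrains the normal part of $\n^2\eta$, but that is precisely what the parallelism assumption allows us to identify with $-\tilde{\mathcal{B}}(\eta)$ via Lemma~\ref{n2eta}, so no tangential component needs to be analyzed.
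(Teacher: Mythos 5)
Your argument is correct and follows essentially the same route as the paper: combine the Euler--Lagrange equation $(\n^2\eta)^\perp=-\norma{\n\eta}^2\eta$ from Proposition~\ref{husP} with the identity $(\n^2\eta)^\perp=-\tilde{\mathcal{B}}(\eta)$ of Lemma~\ref{n2eta}, valid under the parallelism hypothesis. Your extra verification that a unit eigenvector of $\tilde{\mathcal{B}}$ necessarily has eigenvalue $\norma{\n\eta}^2=\forma{\tilde{\mathcal{B}}(\eta),\eta}$, using that parallelism forces $\n_X\eta=-S_\eta(X)$, is a worthwhile detail needed for the ``if'' direction that the paper leaves implicit.
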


\begin{proof}
Using the fact that $\eta$ is parallel in the normal connection it is easily to see that an analogous proof of Proposition~\ref{husP} shows that $\eta$ is a harmonic normal section of $M$ if and only if $\eta$ satisfies 
$(\n^2 \eta)^\perp=-\norma{\n \eta}^2 \eta.$
Then the proof is completed  noting that from Lemma  \ref{n2eta} holds the equation $(\n^2 \eta)^\perp=-\tilde{\mathcal{B}}(\eta)$.
\end{proof}

An isometric action of a compact Lie group $\mathbb{G}$ on a complete Riemannian manifold $N$ is called polar if there is a connected complete Riemanian manifold $\Sigma$ isometrically immersed  in $N$ which intersects orthogonally all the orbits of $\mathbb{G}$  (see \cite{AB}, Section 4.1). We prove:

\begin{thm}\label{Polar}
If an action of a compact Lie group $\mathbb{G}$ on a Riemannian manifold $N$ is polar then the Simons operator $\tilde{\mathcal{B}}$ on the principal orbits of $\mathbb{G}$ is diagonalizable by parallel unit normal sections with constant eigenvalues. Moreover, these parallel unit eigenvectors of $\tilde{\mathcal{B}}$ are harmonic normal sections. 
\end{thm}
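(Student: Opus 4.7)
The plan is to combine the classical structure theory of polar actions---specifically, that principal orbits carry a parallel orthonormal normal frame obtained from the $\mathbb{G}$-action on normal vectors at a section point---with Proposition \ref{eighar}, which reduces the harmonicity of a parallel unit normal section to its being an eigenvector of $\tilde{\mathcal{B}}$.

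Fix a principal orbit $M=\mathbb{G}\cdot p$ with $p\in M\cap\Sigma$, so that $T_pM^\perp=T_p\Sigma$. Since $\mathbb{G}$ acts on $N$ by isometries preserving $M$, the second fundamental form of $M$ is $\mathbb{G}$-equivariant; hence so are $\mathcal{B}$, its fibrewise adjoint $\mathcal{B}^\ast$, and the composition $\tilde{\mathcal{B}}$. In particular, the spectrum of $\tilde{\mathcal{B}}_q$ is independent of $q\in M$, which already accounts for the constancy of the eigenvalues.

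Next, invoke the structural fact from the theory of polar actions (see, e.g., \cite{AB}) that the principal isotropy subgroup at $p$ acts trivially on $T_p\Sigma=T_pM^\perp$. It follows that for each $v\in T_pM^\perp$ the assignment $\eta_v(g\cdot p):=dg_p(v)$ defines a smooth normal vector field on $M$, and such fields exhaust $\mathcal{N}(M)$; moreover, each $\eta_v$ is parallel in the normal connection---another standard consequence of polarity (the horizontal distribution $TM^\perp$ is $\mathbb{G}$-invariant and the section $\Sigma$ is totally geodesic in the directions transverse to the orbits). Diagonalizing the self-adjoint operator $\tilde{\mathcal{B}}_p$ in an orthonormal eigenbasis $v_1,\dots,v_r$ with eigenvalues $\lambda_1,\dots,\lambda_r$, the parallel extensions $\eta_i:=\eta_{v_i}$ form an orthonormal frame of $\mathcal{N}_1(M)$, and the $\mathbb{G}$-equivariance of $\tilde{\mathcal{B}}$ forces $\tilde{\mathcal{B}}(\eta_i)=\lambda_i\eta_i$ on all of $M$, which is the required diagonalization.

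Finally, Proposition \ref{eighar} applied to each $\eta_i$---which is simultaneously parallel in the normal connection and an eigenvector of $\tilde{\mathcal{B}}$---yields that $\eta_i$ is a harmonic unit normal section. The main obstacle is the appeal to polar-action theory: one must verify both that the group-theoretic extension of a normal vector at a section point is well-defined on the whole orbit (using that principal isotropies fix $T_p\Sigma$ pointwise) and that the resulting field is parallel in the normal connection. Both facts are standard in the Palais--Terng formalism, but they are not explicitly recalled in the excerpt and should be invoked with precise references.
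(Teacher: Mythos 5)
Your proposal is correct and follows essentially the same route as the paper: equivariantly extend an orthonormal eigenbasis of $\tilde{\mathcal{B}}$ at one point of the orbit, use the standard polar-action facts that such extensions are well defined and parallel in the normal connection, observe that equivariance of the shape operators forces the extended frame to remain an eigenbasis with constant eigenvalues (the paper verifies this by an explicit computation of $\forma{S_{\widehat{\eta}_l},S_{\widehat{\eta}_k}}$, where you invoke abstract $\mathbb{G}$-equivariance of $\tilde{\mathcal{B}}$, but this is the same idea), and conclude harmonicity via Proposition \ref{eighar}. The only inaccuracy is the incidental claim that the equivariant fields ``exhaust $\mathcal{N}(M)$'' --- they only furnish a global parallel frame, not all normal sections --- but this is not used in the argument.
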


 \begin{proof}
Let $\mu: \mathbb{G}\times N\to N$ be a polar action. For each $g\in \mathbb{G},$ we denote by $\mu^g:N\to N$ the map defined by $\mu^g(\cdot):=\mu(g,\cdot).$ 
 
Let $x\in N$ be such that $\mathbb{G}(x)$ is a principal orbit of $\mathbb{G}$. Let  $\eta_1(x), \cdots,$ $\eta_r(x)$ be an orthonormal basis of eigenvectors of the self-adjoint opera\-tor $\tilde{\mathcal{B}}$ defined on the normal space $T_x^\perp \mathbb{G}(x).$ For each eigenvector $\eta_i(x),$ we define a normal vector field $\widehat{\eta}_i$ along $\mathbb{G}(x)$ by 
 $$\widehat{\eta}_i(\mu(g,x))=d(\mu^g)_x \eta_i(x),\ \ i=1,\ \cdots, r.$$
  It is easy to see that $\widehat{\eta}_i$ is well defined  and is  parallel in the normal connection (see \cite{AB}, Sections 3.4 and 4.1).

We claim that  $\widehat{\eta}_1, \cdots, \widehat{\eta}_r$ are eigenvectors of $\tilde{\mathcal{B}}$ of $\mathbb{G}(x).$ Indeed, let  $\{v_1,\ \cdots,\ v_r\}$ and $\{w_1,\ \cdots,\ w_r\}$ be orthonormal basis of $T_x\mathbb{G}(x)$ that diagonalizes the second fundamental forms  $S_{\eta_l(x)}$ and $S_{\eta_k(x)}$, respectively, $l, k\in \{1,\ \cdots, r\}.$ Thus, if the principal curvatures are denoted by $\lambda_i$ and $\sigma_i$ we have $S_{\eta_l(x)}(v_i)=\lambda_i v_i$ and $S_{\eta_k(x)}(w_i)=\sigma_i w_i.$  

Now, it is easy to see that $S_{\widehat{\eta}_i(\mu(g,x))}=d\mu^g S_{\eta_i(x)} d\mu^{g^{-1}}$ and that the tangent vector fields to $\mathbb{G}(x),$ $V_i(\mu(g,x))=d\mu^g v_i$ and $W_i(\mu(g,x))=d\mu^g w_i,$ are eigenvectors of $S_{\widehat{\eta}_l}$ and $S_{\widehat{\eta}_k}$ with  eigenvalues  equal to $\lambda_i$ and $\sigma_i.$  Hence, if $(a^i_j)$ is the change basis matrix from $\{w_1,\ \cdots,\ w_r\}$ to $\{v_1,\ \cdots,\ v_r\}$ then  $(a^i_j)$ is  the change basis matrix from $\{W_1,\ \cdots,\ W_r\}$ to $\{V_1,\ \cdots,\ V_r\}$ too. 
 As consequence, we have at $\mu(g,x)$
\begin{eqnarray*}
\forma{S_{\widehat{\eta}_l},S_{\widehat{\eta}_k}}&=&\sum_{i=1}^n\forma{S_{\widehat{\eta}_l}(V_i),S_{\widehat{\eta}_k}(V_i)}\\
&=& \sum_{i=1}^n\sum_{j=1}^r\forma{S_{\widehat{\eta}_l}(V_i),S_{\widehat{\eta}_k}(a^j_iW_j)}\\
&=& \sum_{i=1}^n\sum_{j=1}^r a^j_i\forma{S_{\widehat{\eta}_l}(V_i),S_{\widehat{\eta}_k}(W_j)}\\
&=& \sum_{i=1}^n\sum_{j=1}^r a^j_i \lambda_i\sigma_j\forma{v_i,w_j}.
\end{eqnarray*}
Therefore, noting that all the terms  $a_j^i$, $\lambda_i$, $\sigma_j$ and $\forma{v_i,w_j}$ are constant  we have that $\forma{S_{\widehat{\eta}_l},S_{\widehat{\eta}_k}}$ is constant  along  $\mathbb{G}(x).$ That is, 
$$\forma{\tilde{\mathcal{B}}(\widehat{\eta}_l),\widehat{\eta}_k}=\forma{S_{\eta_l(x)},S_{\eta_k(x)}}=\delta_{lk}\forma{S_{\eta_l(x)},S_{\eta_k(x)}}.$$
 Since $l$ and $k$ are arbitrary it follows that $\widehat{\eta}_1,\cdots,\widehat{\eta}_r$ is an eigenbasis of $\tilde{\mathcal{B}}$ along $\mathbb{G}(x)$ with constant eigenvalues equal to $\norma{S_{\eta_l(x)}}^2.$

Finally,  from Proposition \ref{eighar} it follows that the parallel eigenvectors $\widehat{\eta}_1,\cdots,\widehat{\eta}_r$ of $\tilde{\mathcal{B}}$ are harmonic unit normal sections of $\mathbb{G}(x).$
 \end{proof}
 

\section{Harmonicity of the Gauss maps of submanifolds of the Euclidean space}\label{EuclideanSpace}

In this section we prove the results about the harmonicity of the Gauss maps of submanifolds of the Euclidean space. We begin by stating some facts which are direct consequences of our previous results.  

\begin{prop}\label{generlap}
Let $M$ be an $n-$dimensional manifold immersed on the Euclidean space $\mathbb{R}^{m}$  and  let $\eta$ be a unit normal section parallel in the normal bundle of $M$. The Laplacian of the Gauss map $\gamma_\eta$ associated to $\eta$ is given by
\begin{eqnarray*}
 -\Delta\gamma_\eta=n\hspace{0,1cm} {\rm grad}\forma{\overrightarrow{H},\eta}+\tilde{\mathcal{B}}(\eta).
\end{eqnarray*}   
\end{prop}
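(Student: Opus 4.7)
The plan is to reduce the proposition to the two decomposition results already established, namely Corollary~\ref{paral1} and Lemma~\ref{n2eta}. First I would unwind the statement: viewing the Gauss map $\gamma_\eta$ as the $\mathbb{R}^m$-valued function $p\mapsto\eta(p)$, the Laplacian $\Delta\gamma_\eta$ is computed coordinate-by-coordinate in $\mathbb{R}^m$, and since the Levi-Civita connection of $\mathbb{R}^m$ is the flat componentwise derivative, this coincides precisely with the rough Laplacian $\nabla^2\eta$ of $\eta$ as a vector field along $M$ in the sense introduced in Section~\ref{rl}. So the task is reduced to showing
\[
-\nabla^2\eta \;=\; n\,{\rm grad}\forma{\h,\eta}+\tilde{\mathcal B}(\eta).
\]

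The second step is to split $\nabla^2\eta$ into its tangential and normal parts along $M$ and treat each separately. For the tangential part, since $\eta$ is parallel in the normal connection, Corollary~\ref{paral1} gives
\[
\forma{\nabla^2\eta,X}={\rm Ric}_M(\eta,X)-n\forma{{\rm grad}\forma{\h,\eta},X}
\]
for every $X\in\mathcal T(M)$. The ambient space $\mathbb{R}^m$ is flat, so its curvature tensor vanishes identically, whence ${\rm Ric}_M(\eta,X)=0$. Consequently $(\nabla^2\eta)^\top=-n\,{\rm grad}\forma{\h,\eta}$.

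For the normal part, Lemma~\ref{n2eta}, again using the hypothesis that $\eta$ is parallel in the normal connection, yields directly $(\nabla^2\eta)^\perp=-\tilde{\mathcal B}(\eta)$. Adding the two orthogonal contributions gives $\nabla^2\eta=-n\,{\rm grad}\forma{\h,\eta}-\tilde{\mathcal B}(\eta)$, which is the desired identity after multiplying by $-1$.

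There is no genuine obstacle here: both the vanishing of the curvature term and the parallelism of $\eta$ are immediate, and the only thing to be careful about is the identification $\Delta\gamma_\eta=\nabla^2\eta$, which is merely an observation about the flatness of the ambient connection. So the proof is essentially a short assembly of Corollary~\ref{paral1} and Lemma~\ref{n2eta} specialized to $N=\mathbb{R}^m$.
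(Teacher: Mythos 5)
Your proof is correct. The one point you rightly flag as needing care---that for the flat ambient connection the coordinatewise Laplacian $\Delta\gamma_\eta=\sum_j\Delta_M\forma{\eta,e_j}e_j$ coincides with the rough Laplacian $\n^2\eta$---does hold, since in a frame geodesic at $p$ both reduce to $\sum_i\on_{E_i}\on_{E_i}\eta$ with $\on$ the flat derivative. The paper takes a slightly different route: it applies Corollary~\ref{corol2} directly, with $V=e_i$ the constant coordinate fields viewed as Killing fields of $\mathbb{R}^m$ (so that $\n_\eta V=0$ and ${\rm Ric}_M^\perp=0$ kill the extra terms), and sums over $i$. You instead bypass the Killing-field formula entirely and assemble the answer from the tangential/normal decomposition, using Corollary~\ref{paral1} (with ${\rm Ric}_M(\eta,X)=0$ by flatness) for the tangent part and Lemma~\ref{n2eta} for the normal part. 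Since Corollary~\ref{corol2} is itself built from exactly these two ingredients plus the Killing-field lemma, the two arguments are computationally equivalent; yours is arguably the more economical in the Euclidean case because it never invokes Lemma~\ref{lem1}, while the paper's phrasing has the advantage of being the specialization of the formula it will reuse verbatim in the homogeneous-space setting of Section~\ref{AGM}.
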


\begin{proof}
The  Ricci normal operator ${\rm Ric}_M^\perp$ is equal to zero since the curvature tensor of  $\mathbb{R}^m$ vanishes identically. Then, denoting by $\{e_1,\ \cdots,\ e_{m}\}$  the canonical basis of $\mathbb{R}^{m}$ we have, from Corollary~\ref{corol2} 
\begin{equation*}
\Delta \gamma_\eta = \sum_{i=1}^m \Delta_M \forma{\eta, e_i}e_i=-n\hspace{0,1cm}{\rm grad}\forma{\overrightarrow{H},\eta}- \tilde{\mathcal{B}}(\eta).
\end{equation*}
\end{proof}

Recall that a submanifold has parallel normalized mean curvature vector if $\overrightarrow{H}$ is nonzero and the unit normal section $\eta=\overrightarrow{H}/\norma{\overrightarrow{H}}$ is parallel in the normal connection (see \cite{Chen}). Then, from the Proposition~\ref{generlap} we have that an immersed submanifold in the Euclidean space with parallel normalized mean curvature has parallel mean curvature vector if and only if the unit vector $\eta$ in the direction of the mean curvature vector satisfies:  
\begin{eqnarray*}
-\Delta\gamma_\eta=\tilde{\mathcal{B}}(\eta).
\end{eqnarray*}

As a consequence of the Proposition~\ref{generlap} we can prove the equivalences relating  $\eta$ and the associated Gauss map $\gamma_\eta$ of the Theorem~\ref{trn}.

\begin{proof}[Proof of Theorem~\ref{trn}]
From Proposition~\ref{generlap} and since $M$ has parallel mean cuvature vector  we have 
\begin{equation*}
-\Delta \gamma_\eta =\tilde{\mathcal{B}}(\eta).
\end{equation*}
From this equality and recalling that the map $\gamma_\eta$ is harmonic if and only if   $\Delta(\gamma_{\eta})=-f \gamma_{\eta},$ for some function $f:M\to \mathbb{R},$ we obtain that $\gamma_\eta$ is harmonic if and only if $\eta$ is and eigenvector of $\tilde{\mathcal{B}}.$ This proves the equivalence between  \eqref{trn1} and \eqref{trn3}.

The equivalence between \eqref{trn2} and \eqref{trn3} is given in Proposition \ref{eighar}. 
\end{proof}

Let $M$ be an orientable hypersurface of $\mathbb{S}^{n+1}.$ Recall that we denote by $\nu$ a unit normal section of $M$ in $\mathbb{S}^{n+1}$ and by $\mu$  the position vector of $M$ in $\mathbb{R}^{n+2}.$

\begin{prop}\label{lemmasphere}
Let $M$ be an orientable hypersurface of $\mathbb{S}^{n+1},$  $\eta$ a unit normal section of $M$ in $\mathbb{R}^{n+2}$ parallel in the normal connection and let $\theta$ be the angle between $\eta$ and $\mu.$    Then, the Gauss map $\gamma_\eta:M\to \mathbb{S}^{n+1}$ satisfies
$$-\Delta \gamma_\eta=na\hspace{0,1cm} {\rm grad}(H)+(a \norma{S_\nu}^2-nbH)\gamma_\nu+(nb -naH)\gamma_\mu,$$
where $H$ denotes the mean curvature of $M$ in $\mathbb{S}^{n+1}$   with respect to $\nu$, $a=\sin\theta$ and $b=\cos\theta.$
\end{prop}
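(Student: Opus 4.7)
The plan is to apply Proposition~\ref{generlap} directly, which gives
$$-\Delta\gamma_\eta = n\,{\rm grad}\forma{\overrightarrow{H},\eta}+\tilde{\mathcal{B}}(\eta),$$
since $\eta$ is a parallel unit normal section of $M$ in $\mathbb{R}^{n+2}$. So everything reduces to computing $\forma{\overrightarrow{H},\eta}$ and the decomposition of $\tilde{\mathcal{B}}(\eta)$ in the orthonormal normal frame $\{\nu,\mu\}$ of $M$ in $\mathbb{R}^{n+2}$.

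First, by the remark preceding Theorem~\ref{harm}, the hypothesis that $\eta$ is parallel in the normal connection forces $\eta=a\nu+b\mu$ with $a=\sin\theta$ and $b=\cos\theta$ \emph{constants}. Second, using $\overline{\nabla}_X\nu=\nabla^{\mathbb{S}^{n+1}}_X\nu$ and $\overline{\nabla}_X\mu=X$ for $X\in TM$ (where $\overline{\nabla}$ is the connection of $\mathbb{R}^{n+2}$), I would record the two basic facts
$$S_\nu^{\mathbb{R}^{n+2}}=S_\nu,\qquad S_\mu=-{\rm Id},$$
where the former is the shape operator of $M$ in $\mathbb{S}^{n+1}$. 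From these, $\overrightarrow{H}=H\nu-\mu$ and $S_\eta=aS_\nu-b\,{\rm Id}$.

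Now the first term becomes $n\,{\rm grad}\forma{\overrightarrow{H},\eta}=n\,{\rm grad}(aH-b)=na\,{\rm grad}(H)$, since $a,b$ are constants. For the second term, since $\{\nu,\mu\}$ is an orthonormal normal frame, write
$$\tilde{\mathcal{B}}(\eta)=\forma{\tilde{\mathcal{B}}(\eta),\nu}\nu+\forma{\tilde{\mathcal{B}}(\eta),\mu}\mu =\forma{S_\eta,S_\nu}\nu+\forma{S_\eta,S_\mu}\mu,$$
using the identity $\forma{\tilde{\mathcal{B}}(\eta_1),\eta_2}=\forma{S_{\eta_1},S_{\eta_2}}$ from Section~\ref{P}. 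Expanding $S_\eta=aS_\nu-b\,{\rm Id}$ with the Hilbert--Schmidt metric and using $\text{tr}(S_\nu)=nH$, $\forma{{\rm Id},{\rm Id}}=n$, I get
$$\forma{S_\eta,S_\nu}=a\norma{S_\nu}^2-nbH,\qquad \forma{S_\eta,S_\mu}=-anH+bn,$$
which is precisely the claimed formula (identifying $\gamma_\nu,\gamma_\mu$ with $\nu,\mu$ as vectors in $\mathbb{R}^{n+2}$).

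There is no genuine obstacle here; the proof is essentially linear algebra in the two-dimensional normal bundle combined with the umbilicity of $\mathbb{S}^{n+1}$ in $\mathbb{R}^{n+2}$ (which produces $S_\mu=-{\rm Id}$). The only point requiring mild care is recalling that ``parallel'' for $\eta$ in the codimension-two normal bundle forces $a,b$ constant, so that the gradient term simplifies cleanly to $na\,{\rm grad}(H)$.
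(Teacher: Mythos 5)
Your proof is correct and follows essentially the same route as the paper: apply Proposition~\ref{generlap}, note that parallelism forces $\eta=a\nu+b\mu$ with $a,b$ constant, and compute $\tilde{\mathcal{B}}(\eta)$ in the frame $\{\nu,\mu\}$ using $\forma{\tilde{\mathcal{B}}(\eta_1),\eta_2}=\forma{S_{\eta_1},S_{\eta_2}}$ together with $S_\mu=-{\rm Id}$. The only difference is that you spell out the intermediate identities ($S_\eta=aS_\nu-b\,{\rm Id}$, $\overrightarrow{H}=H\nu-\mu$) that the paper leaves implicit.
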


\begin{proof}
The normal vector $\eta$ parallel in the normal connection is of the form $\eta=a\nu+b\mu,$ with $a, b \in \mathbb{R}$ where $a=\sin\theta$ and $b=\cos\theta.$ Then by Proposition~\ref{generlap} we have 
\begin{eqnarray*}
-\Delta \gamma_\eta &=& 
 na\hspace{0,1cm}{\rm grad}(H) +\tilde{\mathcal{B}}(\eta)
\end{eqnarray*}
and, noting  that, 
\begin{eqnarray*}
\tilde{\mathcal{B}}(\eta)&=& a\tilde{\mathcal{B}}(\nu)+b\tilde{\mathcal{B}}(\mu)\\
&=&\left(a \norma{S_\nu}^2-nb H\right)\nu+\left(nb -na H\right)\mu,
\end{eqnarray*}
we conclude the proof.
\end{proof}

We are now in position to prove Theorem~\ref{harm}.

\begin{proof}[Proof of  Theorem~\ref{harm}]
\begin{enumerate}
\item Assume $\theta=0.$ We then have $a=0$ and $b=1$ in  Proposition \ref{lemmasphere}. Also from this proposition   $$-\Delta\gamma_\eta=-nH\gamma_\nu+n\gamma_\mu.$$ Therefore $\gamma_\eta$ is a harmonic map if and only if $H=0.$ 
 Analogously, if $\theta=\frac{\pi}{2},$ then $a=1$ and $b=0$ in  in Proposition \ref{lemmasphere} and $$-\Delta\gamma_\eta=n\hspace{0,1cm} {\rm grad}(H)+\norma{S_\nu}^2\gamma_\nu-nH\gamma_\mu.$$ Hence $\gamma_\eta$ is harmonic if and only if $H=0.$ 

\item Let $\eta^\perp=-b\nu+a\mu$ be a unit normal section of $M$ orthogonal to $\eta,$ where $a=\sin\theta$ and $b=\cos\theta.$ From Proposition~\ref{lemmasphere} it follows that $\gamma_\eta$ is a harmonic map if and only if $H$ is constant and $\forma{-\Delta\gamma_\eta,\eta^\perp}=0.$ Also from this  proposition  we see that  the condition $\forma{-\Delta\gamma_\eta,\eta^\perp}=0$ is equivalent to 
 $$\norma{S_\nu}^2=nH\left(\frac{b}{a}-\frac{a}{b}\right) +n.$$ 
This concludes the proof of the theorem.
\end{enumerate}
\end{proof}

Following  Eells-Lemaire \cite{EL2}, an eigenmap of $M$  is a map $f:M\to \mathbb{S}^k$ such that  $\Delta f = \lambda f$, $\lambda$ constant. The eigenmaps are studied in different contexts, mainly between spheres. The eigenmaps are clearly harmonic, their coordinate functions are eigenfunctions of the Laplacian with the same eigenvalue.

An immediate consequence of   Theorem~\ref{harm} is:

\begin{corol}
Let $M$ be an oriented hypersurface of $\mathbb{S}^{n+1}\subset \mathbb{R}^{n+2}$ and $\eta$ a parallel unit normal section of $M$ with $\theta\neq\frac{\pi}{2}.$   The Gauss map $\gamma_\eta$ is   harmonic  if and only if  $\gamma_\eta$ is an eigenmap.
\end{corol}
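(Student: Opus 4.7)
The forward direction is immediate: any eigenmap into a sphere is harmonic, as the paper notes in the paragraph preceding the corollary (since an eigenmap equation $\Delta f = \lambda f$ with $|f|\equiv 1$ automatically forces $\lambda = -\|df\|^2$ at every point, so in particular the harmonic map equation into the sphere is satisfied). So the plan is to prove the converse, assuming $\gamma_\eta$ is harmonic and splitting into two cases according to $\theta$.

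Case $\theta = 0$: Then $\eta = \mu$ is the position vector, so $a = 0$ and $b = 1$ in the decomposition $\eta = a\nu + b\mu$. By item \eqref{triv1} of Theorem \ref{harm}, harmonicity forces $M$ to be minimal, so $H \equiv 0$. Substituting $H = 0$, $a = 0$, $b = 1$ directly into Proposition \ref{lemmasphere} gives $-\Delta \gamma_\eta = n \gamma_\mu = n \gamma_\eta$, which exhibits $\gamma_\eta$ as an eigenmap with constant eigenvalue $n$.

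Case $\theta \in (0,\pi/2) \cup (\pi/2,\pi)$: Here $a$ and $b$ are both nonzero constants (they equal $\sin\theta$ and $\cos\theta$, and the parallelism of $\eta$ forces them to be constant). By item \eqref{ntriv2} of Theorem \ref{harm}, $M$ is a CMC hypersurface of $\mathbb{S}^{n+1}$ (so $\mathrm{grad}(H) = 0$) and
$$\|S_\nu\|^2 \;=\; nH\!\left(\frac{b}{a} - \frac{a}{b}\right) + n.$$
Plugging these into the formula of Proposition \ref{lemmasphere} leaves
$$-\Delta \gamma_\eta \;=\; (a\|S_\nu\|^2 - nbH)\,\nu + (nb - naH)\,\mu.$$
A direct algebraic simplification using the above expression for $\|S_\nu\|^2$ shows that the $\nu$-coefficient equals $\lambda a$ and the $\mu$-coefficient equals $\lambda b$ with the common constant $\lambda := n(b - aH)/b$. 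Hence $-\Delta \gamma_\eta = \lambda(a\nu + b\mu) = \lambda \gamma_\eta$ with $\lambda$ constant, which is precisely the statement that $\gamma_\eta$ is an eigenmap.

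The only non-trivial step is the short algebraic verification in the second case that both coordinates of $-\Delta \gamma_\eta$ share the same constant proportionality factor relative to $\gamma_\eta$; but this equality of the two ratios is exactly what the condition on $\|S_\nu\|^2$ in Theorem \ref{harm} was engineered to guarantee, so there is no genuine obstacle. Assembling the two cases completes the proof of the corollary.
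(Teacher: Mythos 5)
Your proof is correct and is exactly the argument the paper has in mind: the corollary is stated as an immediate consequence of Theorem~\ref{harm}, and your two-case verification via Proposition~\ref{lemmasphere} (with the algebraic check that $a\norma{S_\nu}^2-nbH$ and $nb-naH$ are both $n(b-aH)/b$ times $a$ and $b$ respectively) supplies precisely the omitted details. No gaps.
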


\begin{proof}[Proof of  Corollary~\ref{corol3}]
First we define a linear map $\phi$ on the tangent bundle of $M$ by $$\forma{\phi(X),Y}=H\forma{X,Y}-\forma{S_\nu(X),Y},$$ and note that \begin{eqnarray}\label{phi}
\norma{\phi}^2=\norma{S_\nu}^2-nH^2.\end{eqnarray}
\begin{enumerate}
\item Case $H=0:$ \eqref{corol3ia} $\Rightarrow$ \eqref{corol3ib} it is trivial. 

\eqref{corol3ib} $\Rightarrow$ \eqref{corol3ic} Let $\eta$ be a unit normal section of $M$ such that the angle $\theta$ between $\eta$ and $\mu$ is neither $0$ nor $\frac{\pi}{2}$ and $\gamma_\eta$ is a harmonic map.  By item \eqref{triv1} of Theorem~\ref{harm} we have that $\norma{S_\nu}^2=n.$ Then, $M$ is a minimal Clifford torus of $\mathbb{S}^{n+1}$ since  they  are the only compact minimal hypersurfaces of $\mathbb{S}^{n+1}$  such that quality $\norma{S_\nu}^2=n$ is satisfied, see \cite{CdCK}.  

\eqref{corol3ic} $\Rightarrow$ \eqref{corol3ia} Let $M$ be a minimal Clifford torus and let $\eta$  a unit normal section of $M$ in $\mathbb{R}^{n+2}.$ If the angle $\theta$  between $\eta$ and $\mu$ is $0$ or $\frac{\pi}{2}$ then from Theorem \ref{harm}  $\gamma_\eta$ is a harmonic map since $H=0$. If $\theta\neq 0$ and $\theta\neq \frac{\pi}{2}$ then from  Theorem~\ref{harm} $\gamma_\eta$ is a harmonic map since  $\norma{S_\nu}^2=n.$ 

\item Case $H\neq 0$ and $n>2:$
\begin{enumerate}
\item  On one hand, from Theorem 1.5 of \cite{AdC}, the $H(r)$-torus  with $r^2< (n-1)/n$ and $n>2$ is the only compact CMC hypersurface of the sphere $\mathbb{S}^{n+1}$ with $\norma{S_\nu}^2=B_H+nH^2.$  On the other hand, from Theorem~\ref{harm} above, $\gamma_\eta$ is a harmonic map if and only if 
$\norma{S_\nu}^2=nH(\cot\theta-\tan\theta)+n.$ Therefore, 
$M$ is the  $H(r)$-torus with $r^2<(n-1)/{n}$
if and only if the only harmonic Gauss maps of $M$ are, up to sign, the two ones determined by the solutions of equation $nH(\cot\theta-\tan\theta)=B_H+nH^2-n.$

\item  From Theorem~\ref{harm} we have that  \begin{eqnarray}\norma{S_\nu}^2=nH(\cot\theta-\tan\theta)+n \label{ccc}\end{eqnarray} since $\gamma_\eta$ is harmonic. Then, the inequality \eqref{corol3_5} is equivalent to 
$\norma{\phi}^2<B_H.$ From Theorem 1.5 of \cite{AdC} $M$ is a totally umbilical hypersurface of $\mathbb{S}^{n+1}.$ Since, $H\neq 0$ it follows from (\ref{ccc}) that the principal curvature $\lambda$ of $M$ is nonzero and satisfies $$\cot\theta-\tan\theta=\frac{\lambda^2-1}{\lambda}.$$
\end{enumerate}

\item Case $n=2$ and $H\neq 0:$  From Theorem~\ref{harm} it follows that  $\gamma_\eta$ is harmonic  if and only if the mean curvature $H$ and the square  of norm of the second fundamental form $S_\nu$ are constant. Hence, since $n=2,$  the principal curvatures of $M$ in $\mathbb{S}^3$ are constant. It follows that $M$ is a product of circles $\mathbb{S}^1(r)\times \mathbb{S}^1(\sqrt{1-r^2})$ with $0<r<1$ $r^2\neq 1/2$  or $M$ is a totally umbilical surface with $\lambda\neq 0$ (see  Theorem 3.29 of  \cite{Cecil}).  Noting that $\norma{S_\nu}=nH(\cot\theta-\tan\theta)+n$ we have 
\begin{equation*}
\cot\theta-\tan\theta=\frac{1-2r^2}{r\sqrt{1-r^2}}.
\end{equation*}
 when $M$ is the product of circles since the principal curvatures are $\frac{\sqrt{1-r^2}}{r}$ and $-\frac{r}{\sqrt{1-r^2}}$
and 
$$\cot\theta-\tan\theta=\frac{\lambda^2-1}{\lambda}$$ when $M$ is a totally umbilical surface.
\end{enumerate}
\end{proof}

\begin{proof}[Proof of Theorem~\ref{Classification}]
Assume that $M$ admits a unit normal section $\eta$ in $\mathbb{R}^4$ such that $\gamma_\eta$ is a harmonic map.

 We claim that  $M$ is a not minimal surface of $\mathbb{R}^4.$ Indeed: Let $\nu$ be a local unit normal section of $M$  orthogonal to $\eta$ and let $\{e_1, e_2, e_3, e_4\}$  be a parallel orthonormal basis of $\mathbb{R}^4$ such that $e_1$ and $e_2$ are eigenvectors of $S_\nu$ at a point $p$ of $M.$ Then, from Lemma~\ref{lemma1}
\begin{eqnarray*}
\Delta\gamma_\eta(p)&=&\sum_{i=1}^4 \forma{\Delta\gamma_\eta(p),e_i}e_i\\
&=& \sum_{i=1}^4\left(\forma{(\n^2\eta)^\perp(p),e_i}-2{\rm tr}\left(S_{\np_{(\cdot)}\eta}({e_i^\top})\right)\right)e_i\\
&=& \sum_{i=3}^4\forma{(\n^2\eta)^\perp(p),e_i}e_i-2\sum_{i=1}^2{\rm tr}\left(S_{\np_{(\cdot)}\eta}({e_i})\right)e_i.\\
\end{eqnarray*}

Since $\gamma_\eta$ is harmonic  the tangent part of the Laplacian of $\gamma_\eta$ is zero, thus at $p$ we have, for $i,j=1,2$ with $i\neq j,$
\begin{eqnarray*}
0&=&{\rm tr}\left(S_{\np_{(\cdot)}\eta}({e_i})\right)\\
&=&\forma{S_{\np_{e_i}\eta}({e_i}),e_i}+\forma{S_{\np_{e_j}\eta}({e_i}),e_j}\\
&=&\forma{\n^\perp_{e_i}\eta,\nu}\forma{S_{\nu}({e_i}),e_i}+\forma{\n^\perp_{e_j}\eta,\nu}\forma{S_{\nu}({e_i}),e_j}\\
&=& \forma{\n^\perp_{e_i}\eta,\nu}\forma{S_\nu(e_i),e_i}.
\end{eqnarray*}

Note that  $\forma{S_\nu(e_i),e_i}\neq 0$ since the trace of $S_\nu$ is zero and the second fundamental form of $M$ spans the normal space of $S.$ Therefore, $\forma{(\n^\perp_{e_j}\eta)(p),\nu(p)}=0$ for $j=1,2$, that is, $\eta$ is parallel in the normal connection. 

Since the codimension of $M$ is $2$  its the normal bundle is flat. This contradicts the fact that the second fundamental form of $M$ spans the normal bundle of $M$ since a minimal surface of $\mathbb{R}^m$ with flat normal bundle is contained in a $\mathbb{R}^3$ (see Chapter 2 of \cite{DT}). Then, $M$ is not a minimal surface of $\mathbb{R}^4$ with substantial codimension $2$. 

Therefore, from the classification of surfaces with parallel mean curvature vector of $\mathbb{R}^4$ given by  B. Chen \cite{Bchen} and S. Yau \cite{Yau} we have, up to isometries of $\mathbb{R}^4,$ that  $M$ is  a CMC surface of $\mathbb{S}^3\subset\mathbb{R}^4.$ As a consequence $\eta$ can be written (locally) in the form 
 $$\eta=a\nu+b\mu,$$
where $\nu$ is a local unit normal section of $M$ in $\mathbb{S}^3,$ $\mu$ the position vector of $M$ and $a, b$ are differentiable functions defined in $M$ such that $a^2+b^2=1.$
 
 We  now consider the case \eqref{abno} of the theorem that is,  $a$ and $b$ are not constants. Let $\{e_1,e_2,e_3,e_4\}$ be a parallel orthonormal basis  of  $\mathbb{R}^4$ such that $e_1$ and $e_2$ are eigenvectors of $S_{\nu}$ at a point $p\in M.$ 

We claim that: for $i=1, 2$ that it holds
\begin{eqnarray}\label{reduction}
\forma{{\rm grad}\forma{\overrightarrow{H},\eta},e_i}=\forma{\overrightarrow{H},\n_{e_i}\eta},
\end{eqnarray} where $H$  is the mean curvature of $M$ in $\mathbb{S}^3$ with respect to $\nu.$ Indeed,
\begin{eqnarray*}
\forma{{\rm grad}\forma{\overrightarrow{H},\eta},e_i}&=&\forma{{\rm grad}\forma{H\nu+\mu,f_1\nu+f_2\mu},e_i}\\
&=& \forma{{\rm grad}(Hf_1+f_2),e_i}\\
&=& e_i(f_1)H+e_i(f_2)\\
&=& \forma{H\nu+\mu,e_i(f_1)\nu+e_i(f_2)\mu}\\
&=& \forma{\overrightarrow{H},\n_{e_i}\eta}.
\end{eqnarray*}

Then from the from equation \eqref{reduction} and  Lemma~\ref{lemma1}  we have for $ i=1, 2$ 
\begin{eqnarray}\label{lapla0}
\Delta\forma{\eta,e_i}(p)&=&-2{\rm tr}\left(S_{\n^\perp_{(\cdot)}\eta}(e_i)\right).
\end{eqnarray}

From the harmonicity of $\gamma_\eta$ we have that  the Laplacian  $\Delta\gamma_\eta(p)$ has not tangent part. Since the Laplacian of $\gamma_\eta$ at $p$ is 
$$\Delta\gamma_\eta(p)=\sum_{i=1}^4\Delta\forma{\eta,e_i}(p)e_i,$$ we have
$${\rm tr}\left(S_{\n^\perp_{(\cdot)}\eta}(e_i)\right)=0, \text{ for}\ i=1, 2.$$

The last equation implies a condition on the second fundamental  determined by  $\eta^\perp=-f_2\nu+f_1\mu.$ Indeed: first note that $\eta^\perp$ is a unit normal section of $M$ orthogonal to $\eta.$  We then have, for $i, j=1, 2$ with $i\neq j$
\begin{eqnarray*}
0&=&{\rm tr}\left(S_{\n^\perp_{(\cdot)}\eta}(e_i)\right)\\
&=& \forma{S_{\n^\perp_{e_i}\eta}(e_i),e_i}+\forma{S_{\n^\perp_{e_j}\eta}(e_i),e_j}\\
&=&  \forma{\n^\perp_{e_i}\eta,\eta^\perp}\forma{S_{\eta^\perp}(e_i),e_i}+\forma{\n^\perp_{e_j}\eta,\eta^\perp}\forma{S_{\eta^\perp}(e_i),e_j}\\
&=&\forma{\n^\perp_{e_i}\eta,\eta^\perp}\forma{S_{\eta^\perp}(e_i),e_i},
\end{eqnarray*}
as claimed. 

We now observe that  none of the following two equalities can occur: 
\begin{eqnarray*}
\forma{\n^\perp_{e_1}\eta,\eta^\perp}=0  &\text{and}&  \forma{\n^\perp_{e_2}\eta,\eta^\perp}=0,\\
\forma{S_{\eta^\perp}(e_1),e_1}=0 &\text{and}& \forma{S_{\eta^\perp}(e_2),e_2} =0.
\end{eqnarray*}

Indeed, the first equality implies that $\eta$ is parallel in the normal connection and then  $a$ and $b$ are constants,   a contradiction!  The second equality  contradicts  the hypothesis that the second fundamental form of $M$ spans the normal space of $M$. 

Therefore,  we have 
\begin{eqnarray*}
\forma{\n^\perp_{e_i}\eta,\eta^\perp}=0 &\text{and}& \forma{S_{\eta^\perp}(e_j),e_j}=0, 
\end{eqnarray*}
for $i, j=1, 2$  and $i\neq j.$

From $\forma{S_{\eta^\perp}(e_j),e_j}=0$  we obtain  $a=\lambda_j b,$ where $\lambda_j$ is the principal curvature of $M$ with respect to $e_j.$ It follows that $$a=\pm \frac{\lambda_j}{\sqrt{1+\lambda^2_j}}\ \  \text{and}\ \  b=\pm \frac{1}{\sqrt{1+\lambda^2_j}}.$$ Moreover,   for $i\neq j$ one has $\forma{\n^\perp_{e_i}\eta,\eta^\perp}=0 $ and hence  $e_i(\lambda_j)=0$ that is, $\lambda_j$ is a non constant principal curvature which it is constant along  the principal direction $e_i.$  We  have then  proved item \eqref{abno} of the  corollary.

It remains to consider the case when $a$ and $b$ are constants. 
If $a=0$ or $b=0$ then, from  Theorem~\ref{harm},    $M$ is a minimal surface of $\mathbb{S}^3.$ It is the alternative \eqref{class1c}.

Assume that $a, b$ are non zero constants. Then, from Theorem~\ref{harm},  the second fundamental form of $M$ has constant length. Since $M$ is a CMC surface of $\mathbb{S}^3$ it  follows that the principal curvatures of $M$ are constant.  Then,  $M$ is contained in a totally umbilical non totally geodesic surface of $\mathbb{S}^3$ or $M$ is contained in an isoparametric surface of $\mathbb{S}^3$ with two principal curvatures, that is, $M$ is an open subset  of the product of circles $\mathbb{S}^1(r)\times \mathbb{S}^1(\sqrt{1-r ^2})\subset \mathbb{S}^3\subset \mathbb{R}^4$ with $0<r<1$ (see  Theorem 3.29 of  \cite{Cecil} ). 

We now prove the converse of items  \eqref{class1a}, \eqref{class1b} and \eqref{class1c}. If $M$ is a minimal surface and $\eta$ is the position vector or $M$ or $\eta=\nu$ then from Theorem \ref{harm}  $\gamma_\eta$ is a harmonic map.  This proves the converse of item \eqref{class1c}. 

If $M$ is an open subset of an umbilical surface of $\mathbb{S}^3$ or $M$ is an open subset of $\mathbb{S}^1(r)\times \mathbb{S}^1(\sqrt{1-r^2}).$  Then, from Theorem \ref{harm} the harmonic Gauss maps are given by the solutions for $\theta$ of 
$$\norma{S_\nu}=nH(\cot\theta-\tan\theta)+n,$$ where $H$ is the mean curvature of $M$ with respect to $\nu.$  This proves the converse of items \eqref{class1a} and \eqref{class1b}. 
 \end{proof}

\begin{proof}[Proof of Theorem~\ref{nhS4}]
The proof is analogous to the first part of proof of Theorem \ref{Classification}. Assume by contradiction that the minimal surface $M$ of $\mathbb{S}^4\subset\mathbb{R}^5$ admits a  unit normal section $\eta$ of $M$ such that $\gamma_\eta$ is a harmonic map.

 Let $\nu$ be a local unit normal section of $M$ in $\mathbb{S}^4$ such that $\forma{\nu,\eta}=0.$ 
 We consider a parallel orthonormal basis $\{e_1, e_2, e_3, e_4, e_5\}$  of $\mathbb{R}^5$ such that $e_1$ and $e_2$ are eigenvectors of $S_\nu$ at $p.$   Then by Lemma \ref{lemma1}  
\ we have
\begin{eqnarray*}
\Delta\gamma_\eta(p) &=& \sum_{i=1}^5 \forma{\Delta\gamma_\eta(p),e_i}e_i\\
&=& \sum_{i=1}^5\left(\forma{(\n^2\eta)^\perp(p),e_i}-2\forma{\mu,\n_{e_i^\top}\eta}-2{\rm tr}\left(S_{\np_{(\cdot)}\eta}({e_i^\top})\right)\right)e_i\\
&=& \sum_{i=3}^5\forma{(\n^2\eta)^\perp(p),e_i}e_i-2\sum_{i=1}^2{\rm tr}\left(S_{\np_{(\cdot)}\eta}({e_i})\right)e_i\\
\end{eqnarray*}

From the harmonicity of $\gamma_\eta$ we have at  $p,$ for $i, j=1, 2$  with $i\neq j$ that 
\begin{small}
\begin{eqnarray*}
0&=&{\rm tr}\left(S_{\np_{(\cdot)}\eta}({e_i})\right)\\
&=&\forma{S_{\np_{e_i}\eta}({e_i}),e_i}+\forma{S_{\np_{e_j}\eta}({e_i}),e_j}\\
&=&\forma{\n^\perp_{e_i}\eta,\nu}\forma{S_{\nu}({e_i}),e_i}+\forma{\n^\perp_{e_i}\eta,\mu}\forma{S_{\mu}({e_i}),e_i}+\forma{\n^\perp_{e_j}\eta,\nu}\forma{S_{\nu}({e_i}),e_j}
+ \\&& \forma{\n^\perp_{e_j}\eta,\mu}\forma{S_{\mu}({e_i}),e_j}\\
&=&\forma{\n^\perp_{e_i}\eta,\nu}\forma{S_\nu(e_i),e_i}.
\end{eqnarray*}
\end{small}
If the second fundamental form of $M$ spans the normal space at each point then $\eta$ is parallel in the normal connection.  The minimality of $M$ then imples that its substantial codimension can be reduced, contradiction! This concludes the proof of the theorem.
\end{proof}

We state the next result we recall  the classical notion of isoparametric submanifold see (\cite{T}): 
\begin{defin}\label{isopara}
A complete submanifold $M$ of $N$ ($N = \mathbb{R}^m$, $\mathbb{S}^m$ or $\mathbb{H}^m$) is called isoparametric if the normal bundle of $M$ is flat and the principal curvatures of $M$ in the directions of any parallel normal vector field are constant.
\end{defin}

\begin{proof}[Proof of  Theorem~\ref{isorn}]
Choose  $p\in M$ and  let $\{\eta_1(p),\ \cdots,\ \eta_r(p)\}$ be an orthonormal basis of $T_p M$ of eigenvectors of $\tilde{\mathcal{B}}.$ Since the normal bundle of $M$ is globally flat (see \cite{T})) we can extend $\{\eta_1(p),\ \cdots,\ \eta_r(p)\}$ to an orthonormal basis $\{\eta_1,\cdots, \eta_r\}$ of the normal bundle of $M$ such that each vector field of this basis is parallel in the normal bundle. Moreover, the principal curvatures of the shape operators $S_{\eta_l},$ $l=1,\cdots, r,$ are constant. Then, each function  $\forma{S_{\eta_l},S_{\eta_k}}$ is constant and hence
$$\forma{\tilde{\mathcal{B}}(\eta_i),\eta_j}=\forma{\tilde{\mathcal{B}}(\eta_i),\eta_j}_p=\delta_{ij}.$$
 Therefore, $\eta_1,\ \cdots, \eta_r$ are eigenvectors of $\tilde{\mathcal{B}}.$ By Theorem~\ref{trn} each $\gamma_{\eta_k}$ is a harmonic map of $M$.
 
\end{proof}

The influence of the  image of the Gauss map of a minimal or  a CMC hypersurface $M$ of $\mathbb R^m$ on the geometry and topology of $M$ is a classical topic of study in Differential Geometry. We obtain here:

\begin{thm}
Let $M$ be a compact  $n$-dimensional manifold immersed in $\mathbb{R}^{n+r}$ with parallel mean curvature vector. Let $\eta_1,\cdots, \eta_k$ be  parallel unit normal vector fields in the normal connection that are linearly independent. If $\eta_1,\cdots, \eta_k$  are eigenvectors of $\tilde{\mathcal{B}}$  and the images of the associated Gauss maps $\gamma_{\eta_1},\ \cdots,\ \gamma_{\eta_k}$   lie each in an open  hemisphere of the sphere $\mathbb{S}^{n+r-1}$ then $k<r$ and the codimension of $M$ can be reduced to $r-k$ that is, $M$ is contained in a totally geodesic submanifold of dimension $n+r-k$ of $\mathbb R^{m+r}$.
\end{thm}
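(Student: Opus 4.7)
The plan is to combine Theorem~\ref{trn} with the classical maximum--principle argument showing that a harmonic map from a compact manifold into an open hemisphere of a sphere must be constant. Since $M$ has parallel mean curvature vector and each $\eta_i$ is a parallel unit normal section which is an eigenvector of $\tilde{\mathcal{B}}$, Theorem~\ref{trn} immediately yields that every Gauss map $\gamma_{\eta_i}:M\to \mathbb{S}^{n+r-1}$ is harmonic.

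To exploit the hemisphere hypothesis, for each $i$ I would choose a unit vector $v_i\in\mathbb{R}^{n+r}$ such that the scalar function $u_i:=\forma{\gamma_{\eta_i},v_i}$ is strictly positive on $M$. Any harmonic map $f$ from $M$ into the unit sphere satisfies the tension--field equation $\Delta f=-\norma{df}^2 f$, so testing it against the constant vector $v_i$ gives
$$\Delta u_i=-\norma{d\gamma_{\eta_i}}^2 u_i \leq 0$$
on the compact manifold $M$. The maximum principle (applied to $-u_i$) forces $u_i$ to be constant, which in turn forces $\norma{d\gamma_{\eta_i}}\equiv 0$, so that $\gamma_{\eta_i}$ is constant. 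In particular, each $\eta_i$ is a fixed unit vector $v_i\in\mathbb{R}^{n+r}$ which is normal to $M$ at every point.

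The rest is elementary linear algebra, assuming $M$ connected. Because each $v_i$ annihilates every tangent vector of $M$, the function $p\mapsto\forma{p,v_i}$ is constant on $M$, equal to some $c_i$. Hence $M$ lies in the intersection of the $k$ affine hyperplanes $\{p\in\mathbb{R}^{n+r}:\forma{p,v_i}=c_i\}$, and since $v_1,\dots,v_k$ are linearly independent this intersection is a totally geodesic affine subspace of dimension $n+r-k$ of $\mathbb{R}^{n+r}$, containing $M$. For the inequality $k<r$, I would argue by contradiction: if $k=r$ this affine subspace would have dimension $n$, so the immersion of the compact $n$-manifold $M$ into it would have nonempty compact open image in an $n$-dimensional affine space, which is impossible.

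The main obstacle is the hemisphere--to--constant step; once the Gauss maps are shown to be constant the remainder follows from standard linear algebra and a short compactness observation, and the harmonicity itself is supplied for free by Theorem~\ref{trn}.
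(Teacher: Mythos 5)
Your proposal is correct and follows essentially the same route as the paper: harmonicity from Theorem~\ref{trn}, the eigenmap equation $\Delta\gamma_{\eta_i}=-\norma{S_{\eta_i}}^2\gamma_{\eta_i}$, and the observation that a positive superharmonic function on a compact manifold is constant, forcing $S_{\eta_i}\equiv 0$. The only (harmless) divergence is at the end, where you reduce the codimension by the elementary affine-hyperplane argument for the now-constant normals instead of invoking the first-normal-space reduction result of \cite{DT}; just note that the constant value of $\eta_i$ need not equal the hemisphere's pole, so it should get its own name.
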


\begin{proof}
 From Theorem~\ref{trn} it follows that the Gauss maps  $\gamma_{\eta_1},\ \cdots,\ \gamma_{\eta_k}:M\to \mathbb{S}^{n+r-1}$ are harmonic and satisfy
\begin{equation}\label{laplaeuch} 
\Delta\gamma_{\eta_i}=-\norma{S_{\eta_i}}^2\gamma_{\eta_i},
\end{equation}
$i=1,\cdots, n.$ Equation~\eqref{laplaeuch} and the hypothesis on the images of the Gauss maps imply  that for some vectors $v_1,\ \cdots,\ v_k,$  the  functions $\forma{\eta_1,v_1},\ \cdots,\ \forma{\eta_k,v_k}$ are positive and superharmonic. Thus, each $\forma{\eta_i,v_i}$ is constant and non vanishing. Then, from \eqref{laplaeuch}, the shape operators $S_{\eta_i}$ vanishes identically, $i=1,\cdots, k$. 
Consequently, the parallel subbundle $L=\left(Span\{\eta_1,\ \cdots,\ \eta_k\}\right)^\perp$ contains, for all $p\in M,$ the first normal space of the immersion 
$$N_1(p):=Span\{B(X,Y)\mid X, Y \in T_p M\}.$$ 

 Therefore, from Proposition 2.1 of \cite{DT}, we conclude that the codimension of $M$ can be reduced to  the  rank of the subbundle $L.$ Clearly $k<r$ otherwise $M$ should be totally geodesic which is not possible since $M$ is compact.
\end{proof}

\section{Harmonicity of the Gauss maps of submanifolds of homogeneous spaces}\label{AGM}

Gauss  maps of oriented hypersurfaces have been defined and studied in several ambient spaces (for example  \cite{SMB}, \cite{SML}, \cite{BR}, \cite{BN}, \cite{BD}, \cite{DFM}, \cite{EFFR}, \cite{FM}, \cite{HOS}, \cite{LR}, \cite{M},
 \cite{M} and \cite{RR}), and different versions of the Ruh-Vilms theorem were obtained.

The main goal of this  section is  extend to symmetric spaces the results of the previous section and several results about the harmonicity of the Gauss maps of some of the references cited above. In the Sections~\ref{hs} and \ref{s7} we  recall and adapt to our context the constructions of \cite{SML}, \cite {BR} and \cite{RR}.

\subsection{Homogeneous spaces}\label{hs}

Let $N$ be a homogeneous space and let  $\mathbb G$ be the connected component containing the identity of the the  isometry group of $N.$ Then $N$ is isometric to 
 $$\mathbb{G}/\mathbb{K}=\{x\mathbb{K}\mid x\in \mathbb{G}\}$$ 
where $\mathbb K$ is the isotropy subgroup of $\mathbb G$ at some point of $N$ and the metric of $\mathbb{G}/\mathbb{K}$ comes from a metric in $\mathbb G$ such that the projection $\pi:\mathbb{G}\to \mathbb{G}/\mathbb{K}$ is a pseudo Riemannian submersion. We assume that the metric in  $\mathbb{G}$ is a pseudo bi-invariant metric. 

The set of horizontal vectors of the projection $\pi:\mathbb{G}\to \mathbb{G}/\mathbb{K},$ at each $x\in \mathbb{G}$ is the subspace  $(T_x(x\mathbb{K}))^\perp$ of $T_x\mathbb{G}.$  The linear isometry  defined on horizontal vectors at $x$  by

 $$d\pi_x\vert _{(T_x(x\mathbb{K}))^\perp}:T_x(x\mathbb{K})^\perp\to T_{\pi(x)}\mathbb{G}/\mathbb{K},$$
we shall denote  by $l_x.$

Now consider the map $\Gamma: T(\mathbb{G}/\mathbb{K})\to \mathfrak{g}$  between the tangent bundle  $T(\mathbb{G}/\mathbb{K})$ and the Lie algebra $ \mathfrak{g}$ of $\mathbb{G},$ defined at each $p\in \mathbb{G}/\mathbb{K}$ by 

\begin{eqnarray*}
\Gamma_p: T_p \mathbb{G}/\mathbb{K} &\to &\mathfrak{g}\\
v&\mapsto & (dR_{x^{-1}})_x l_{x}^{-1}(v),
\end{eqnarray*}
where $R$ denotes the right translation  on $\mathbb{G}$ and $x$ is any point in the fiber $\pi^{-1}(p).$ 

\begin{lem}
For each $p\in \mathbb{G}/\mathbb{K},$ the linear transformation $\Gamma_p$ 
 is well defined.
\end{lem}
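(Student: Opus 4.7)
The plan is to show that if two points $x, x' \in \mathbb{G}$ lie over the same $p$, then the two resulting elements of $\mathfrak{g}$ coincide. Since the fibers of $\pi$ are right cosets, any other lift has the form $x' = xk$ with $k \in \mathbb{K}$, so the task reduces to comparing $(dR_{x^{-1}})_x \, l_x^{-1}(v)$ with $(dR_{(xk)^{-1}})_{xk}\, l_{xk}^{-1}(v)$. I will first unpack how right translation by $k$ relates the two horizontal lifts, and then use the multiplicativity of right translation to cancel the factor of $k$ after translation back to the identity.

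The first key step is the formula $l_{xk}^{-1}(v) = (dR_k)_x\, l_x^{-1}(v)$. To see this, note that $R_k$ preserves the fiber $x\mathbb{K}$ (as a set), so $(dR_k)_x$ maps $T_x(x\mathbb{K})$ into $T_{xk}(x\mathbb{K})$; because the metric on $\mathbb{G}$ is pseudo bi-invariant, $R_k$ is an isometry, so it also sends the horizontal space $(T_x(x\mathbb{K}))^\perp$ onto $(T_{xk}(xk\mathbb{K}))^\perp$. Moreover, $\pi \circ R_k = \pi$ (the coset $y\mathbb{K}$ is unchanged by right multiplication by $k \in \mathbb{K}$), so $d\pi_{xk}\circ (dR_k)_x = d\pi_x$, which on horizontal vectors gives $l_{xk}\circ (dR_k)_x = l_x$, i.e.\ the claimed identity.

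The second key step is the identity $(dR_{(xk)^{-1}})_{xk}\circ (dR_k)_x = (dR_{x^{-1}})_x$. Using the composition rule $R_a\circ R_b = R_{ba}$ for right translations, write $R_{(xk)^{-1}} = R_{k^{-1}x^{-1}} = R_{x^{-1}}\circ R_{k^{-1}}$, so
\begin{equation*}
(dR_{(xk)^{-1}})_{xk}\circ (dR_k)_x = (dR_{x^{-1}})_x \circ (dR_{k^{-1}})_{xk}\circ (dR_k)_x = (dR_{x^{-1}})_x,
\end{equation*}
since $R_{k^{-1}}\circ R_k = \mathrm{id}$. Concatenating the two steps gives
\begin{equation*}
(dR_{(xk)^{-1}})_{xk}\, l_{xk}^{-1}(v) = (dR_{x^{-1}})_x\, l_x^{-1}(v),
\end{equation*}
which is precisely the independence of the chosen lift.

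There is no substantial obstacle in this argument; the only point that demands care is verifying that the claimed cancellation genuinely uses only the bi-invariance of the metric (for step one) and the right-action identity $R_a\circ R_b = R_{ba}$ (for step two), without tacitly invoking the isotropy action on $\mathfrak{k}^\perp$. Once those are pinned down, the verification is a straightforward chain-rule computation.
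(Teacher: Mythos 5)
Your proof is correct and follows essentially the same route as the paper's: both reduce to the relation $l_{xk}^{-1}(v)=(dR_k)_x\,l_x^{-1}(v)$ (from $\pi\circ R_k=\pi$ and $R_k$ preserving horizontality) and then cancel the factor of $k$ via the composition rule for right translations. Your version merely spells out the justification for horizontality preservation (the pseudo bi-invariance of the metric) a bit more explicitly than the paper does.
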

\begin{proof}
If $x, y \in \pi^{-1}(p)$ then $y=R_k(x)$ for some $k\in \mathbb{K}.$ Then, for any horizontal vector $v\in T_x \mathbb{G}$ we observe that $l_x(v)=(d\pi)_y (dR_k)_x  (v)=l_y((dR_k)_x(v)),$ since $R_k$ preserves horizontality. Note that $[(dR_k)_x]^{-1}=(dR_{k^{-1}})_{y} =(dR_{y^{-1}x})_{y}$, then, 

\begin{eqnarray*}
(dR_{x^{-1}})_x l_x^{-1}(v) &=& (dR_{x^{-1}})_x [(dR_k)_x]^{-1}l_y^{-1}(v)\\
 &= & (dR_{x^{-1}})_x (dR_{y^{-1}x})_{y}l_y^{-1}(v)\\
 &= &  (dR_{y^{-1}})_{y}l_y^{-1}(v).
\end{eqnarray*}
Therefore, $\Gamma_p$ is well defined.

\end{proof}

\begin{prop}
For each $v\in \mathfrak{g}$ is possible to associate a Killing vector field $\zeta(v)$ of $N$ such that for all $p\in M$ 
$$\forma{\Gamma_p(u),v}=\forma{u,\zeta(v)(p)},\ \ u\in T_pN.$$
\end{prop}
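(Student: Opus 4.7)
The plan is to define $\zeta(v)$ as the fundamental vector field generated by the one-parameter subgroup $\exp(tv)\subset\mathbb{G}$ acting on $N=\mathbb{G}/\mathbb{K}$, namely
\[
\zeta(v)(p)=\frac{d}{dt}\bigg\vert_{t=0}\exp(tv)\cdot p,\qquad p\in N.
\]
Since $\mathbb{G}$ acts on $N$ by isometries, $\zeta(v)$ is automatically a Killing vector field of $N$. The only real content of the proposition is therefore the inner-product identity, which I expect to reduce to an application of the bi-invariance of the metric on $\mathbb{G}$.

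To set up the identity, I would first express $\zeta(v)(p)$ through the submersion $\pi$. Fix $p\in N$ and $x\in\pi^{-1}(p)$; then the curve $t\mapsto \exp(tv)\cdot x$ in $\mathbb{G}$ has initial velocity $(dR_{x})_{e}v$, and its $\pi$-projection is $t\mapsto \exp(tv)\cdot p$. Hence
\[
\zeta(v)(p)=d\pi_{x}(dR_{x})_{e}v.
\]
Write the orthogonal decomposition $(dR_{x})_{e}v=h+w$ with $h\in (T_{x}(x\mathbb{K}))^{\perp}$ horizontal and $w\in T_{x}(x\mathbb{K})$ vertical. Because $d\pi_{x}$ kills $w$ and is a linear isometry $l_{x}$ on horizontal vectors, $l_{x}^{-1}(\zeta(v)(p))=h$.

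Now fix $u\in T_{p}N$ and let $\tilde u:=l_{x}^{-1}(u)$ be its horizontal lift. Since $l_{x}$ is an isometry and $\tilde u\perp w$,
\[
\langle u,\zeta(v)(p)\rangle=\langle \tilde u,h\rangle=\langle \tilde u,\,(dR_{x})_{e}v\rangle.
\]
Applying the bi-invariance of the pseudo-bi-invariant metric of $\mathbb{G}$, right-translation by $x^{-1}$ is an isometry, so
\[
\langle \tilde u,\,(dR_{x})_{e}v\rangle=\langle (dR_{x^{-1}})_{x}\tilde u,\,v\rangle=\langle \Gamma_{p}(u),v\rangle,
\]
which is the required formula.

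The main subtlety (rather than obstacle) is to check that both sides are genuinely independent of the choice of lift $x\in\pi^{-1}(p)$: for the right-hand side, this was already verified in the previous lemma for $\Gamma_{p}$, while $\zeta(v)(p)$ is defined intrinsically from the $\mathbb{G}$-action and so requires no choice. Similarly, the vertical component $w$ appearing in the decomposition of $(dR_{x})_{e}v$ depends on $x$, but it drops out of the computation because only horizontal pairings survive. Once these identifications are checked, the Killing property of $\zeta(v)$ follows from isometric invariance of the action, and the proof is complete.
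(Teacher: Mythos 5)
Your proposal is correct and follows essentially the same route as the paper: define $\zeta(v)$ as the fundamental Killing field of the left $\mathbb{G}$-action, identify $\zeta(v)(p)=d\pi_x(dR_x)_e v$, pair with the horizontal lift $l_x^{-1}(u)$ (so the vertical component drops out by the submersion property), and finish with right-invariance of the metric. Your explicit horizontal/vertical decomposition just spells out a step the paper leaves implicit.
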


\begin{proof}
 First note that the left multiplication  of the Lie Group $\mathbb{G}$ on  $\mathbb{G}/\mathbb{K}$ define an isometric action $$\mu:\mathbb{G}\times \mathbb{G}/\mathbb{K} \to  \mathbb{G}/\mathbb{K}$$ given  for each $g\in \mathbb{G}$ by
$$\mu(g, x\mathbb{K})=\pi(L_g(x)), \ \ x\in \mathbb G,$$
 where $L_g$ denotes the left translation by $g$ on $\mathbb{G}.$
  Then  for each $v\in \mathfrak{g}$ the vector field  $\zeta(v) $ of $\mathbb{G}/\mathbb{K}$ determined by 

 $$\zeta(v)(p):=\left.\frac{d}{dt}\mu(\exp(tv),p)\right\vert_{{t=0}},\ \ p\in \mathbb{G}/\mathbb{K},$$ 
  where $\exp$ is the Lie exponential map of $\mathbb{G},$ is a Killing vector field.  Now note that, given $p\in \mathbb{G}/\mathbb{K}$ and $x\in \pi^{-1}(p)$ we have that $\zeta(v)(p)=(d\pi)_x (dR_x)_e (v)$ since

{\footnotesize  $$\zeta(v)(p)=\left.\frac{d}{dt}\mu(\exp(tv),p)\right\vert_{{t=0}}=\left.\frac{d}{dt}\pi(L_{\exp(tv)}(x))\right\vert_{{t=0}}=\left.\frac{d}{dt}\pi(R_x(\exp(tv)))\right\vert_{{t=0}}.$$}
Thus for each $u\in T_p \mathbb{G}/\mathbb{K}$ we then have 
\begin{eqnarray*}
\forma{u,\zeta(v)(p)}&=& \forma{u,(d\pi)_x (dR_x)_e (v)}\\
&=& \forma{l_x^{-1}(u), (dR_x)_e (v)}\\
&=& \forma{(dR_{x^{-1}})_x l_x^{-1}(u), v}\\
&=& \forma{\Gamma_p(u),v}.
\end{eqnarray*}
This concludes the proof the proposition. 

\end{proof}

An important  family of examples of homogenous spaces that can be represented as a quotient as above are the symmetric spaces  \cite{RR}. A simple example is the trivial quotient $\mathbb{G}/\mathbb{K},$ where  $\mathbb G$ is a lie group with a bi-invariant metric and $\mathbb{K}=\{0\}$. The map  $\Gamma:T\mathbb G\to \mathbb R^m:=T_e\mathbb G,$ $m=\dim\mathbb G,$ is given by $\Gamma(p,v)=d(L_p)^{-1}(v)$ where $L_p$ is the left translation on $\mathbb G,$ $L_p(x)=px.$ 

\defin [Gauss map]\label{Gauss}
 Let $M$ be an $n$-dimensional manifold immersed in a  homogeneous space $\mathbb{G}/\mathbb{K},$ such that $\mathbb{G}$ has dimension $m+k$ and $\mathbb{K}$ has dimension $k,$ and assume that $\mathcal{N}_1(M)$ is nonempty. For each $\eta\in \mathcal{N}_1(M)$ we say that the map 
\begin{eqnarray*}
\gamma_\eta:M &\to & \mathbb{S}^{m+k-1}\subset \mathfrak{g}\\
 p & \mapsto & \Gamma_p(\eta(p)).
\end{eqnarray*}
is the \textit{Gauss map associated} to $\eta.$

The Gauss map of hypersurfaces of  connected spaces of constant curvature and of the spaces  $\mathbb{S}^2\times \mathbb{R}$, $\mathbb{H}^2\times \mathbb{R}$ is computed explicitly in \cite{RR} and \cite{LR}, respectively.

In the following result we calculate the Laplacian of  $\gamma_\eta.$

\begin{thm}
Let $M$ be an $n-$\-di\-men\-sio\-nal manifold immersed in a homogeneous space $ \mathbb{G}/\mathbb{K}.$ Then the Gauss map $$\gamma_\eta:M\to \mathbb{S}^{m+k-1}\subset\mathfrak{g}, \ \ \dim\mathbb K=k, \ \ \dim \mathfrak{g}=m+k,$$ associa\-ted to a parallel unit normal vector $\eta$ of $M$ satisfies
 \begin{eqnarray*}
-\Delta\gamma_\eta=\Gamma\left(n\hspace{0,04cm} {\rm grad}\forma{\overrightarrow{H},\eta}+\tilde{\mathcal{B}}(\eta)+{\rm Ric}_M^\perp (\eta)\right)+n\sum_{i=1}^{m+k}\forma{\overrightarrow{H},\n_{\eta} \zeta(v_i)}v_i.
\end{eqnarray*}
where $\{v_1,\ \cdots,\ v_{m+k}\}$ is a fixed orthonormal basis of $\mathfrak{g}.$
\end{thm}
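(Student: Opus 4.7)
The plan is to reduce the computation of $\Delta\gamma_\eta$ to the Laplacians of the scalar coordinate functions of $\gamma_\eta$ with respect to the fixed orthonormal basis $\{v_1,\dots,v_{m+k}\}$ of $\mathfrak{g}$, and then apply Corollary~\ref{corol2} to each coordinate. The defining identity $\forma{\Gamma_p(u),v}=\forma{u,\zeta(v)(p)}$, established just before the statement, gives
\begin{equation*}
\gamma_\eta(p)=\Gamma_p(\eta(p))=\sum_{i=1}^{m+k}\forma{\Gamma_p(\eta(p)),v_i}\,v_i=\sum_{i=1}^{m+k}f_i(p)\,v_i,
\end{equation*}
where $f_i(p):=\forma{\eta(p),\zeta(v_i)(p)}$. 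Since the Laplacian acts componentwise on $\mathfrak{g}$-valued maps (regarded as $\mathbb{R}^{m+k}$-valued via the orthonormal basis), I obtain $\Delta\gamma_\eta=\sum_{i}(\Delta_M f_i)v_i$, so it suffices to compute $\Delta_M f_i$ for each $i$.

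Each $\zeta(v_i)$ is a Killing vector field of $N=\mathbb{G}/\mathbb{K}$ by the proposition preceding Definition~\ref{Gauss}, and $\eta$ is parallel in the normal connection of $M$ by hypothesis. These are exactly the hypotheses needed to apply Corollary~\ref{corol2} with $V=\zeta(v_i)$, which yields
\begin{equation*}
-\Delta_M f_i \;=\; n\forma{{\rm grad}\forma{\h,\eta},\zeta(v_i)}+n\forma{\h,\n_\eta\zeta(v_i)}+\forma{\tilde{\mathcal{B}}(\eta),\zeta(v_i)}+\forma{{\rm Ric}_M^\perp(\eta),\zeta(v_i)}.
\end{equation*}

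The final step is to assemble $-\Delta\gamma_\eta=\sum_i(-\Delta_M f_i)v_i$ and simplify. The key observation is a collapsing identity: for any $X\in T_pN$, the orthonormal expansion in $\mathfrak{g}$ combined with $\forma{\Gamma_p(X),v_i}=\forma{X,\zeta(v_i)(p)}$ gives
\begin{equation*}
\Gamma_p(X)=\sum_{i=1}^{m+k}\forma{\Gamma_p(X),v_i}\,v_i=\sum_{i=1}^{m+k}\forma{X,\zeta(v_i)(p)}\,v_i.
\end{equation*}
Applying this with $X$ successively equal to $n\,{\rm grad}\forma{\h,\eta}$, $\tilde{\mathcal{B}}(\eta)$, and ${\rm Ric}_M^\perp(\eta)$ — each of which is a well-defined element of $T_pN$ (tangent or normal to $M$, in either case in $T_pN$) — collapses three of the four sums into the single term $\Gamma\bigl(n\,{\rm grad}\forma{\h,\eta}+\tilde{\mathcal{B}}(\eta)+{\rm Ric}_M^\perp(\eta)\bigr)$. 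The remaining sum $n\sum_i\forma{\h,\n_\eta\zeta(v_i)}v_i$ cannot be collapsed in this way, because its dependence on $v_i$ sits inside the covariant derivative $\n_\eta\zeta(v_i)$ rather than inside $\zeta(v_i)$ itself, so it must be kept as written, producing the stated formula.

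The only delicate point is verifying applicability of Corollary~\ref{corol2} to each $f_i$, which requires exactly the Killing property of $\zeta(v_i)$ proved earlier and the parallelism of $\eta$; beyond this the argument is essentially bookkeeping in the orthonormal basis $\{v_i\}$, leveraging the duality between $\Gamma$ and $\zeta$.
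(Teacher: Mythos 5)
Your proposal is correct and follows essentially the same route as the paper: expand $\gamma_\eta$ in the coordinates $\forma{\eta,\zeta(v_i)}$ via the duality between $\Gamma$ and $\zeta$, apply Corollary~\ref{corol2} with $V=\zeta(v_i)$ to each coordinate, and reassemble three of the four resulting sums into a single $\Gamma(\cdot)$ term. The paper's proof is exactly this computation, written as a chain of equalities.
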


\begin{proof}
Let $\eta$ be a unit parallel normal vector field of $M$ and let $v_1, \cdots,$ $v_{m+k}$ be an orthonormal basis of $\mathfrak{g}.$  Thus, from Corollary~\ref{corol2}

\begin{small}
\begin{eqnarray*}
-\Delta\gamma_\eta &=&-\sum_{i=1}^{m+k}\Delta_M\forma{\gamma_\eta,v_i}v_i\\
&=&- \sum_{i=1}^{m+k}\Delta_M\forma{\eta,\zeta(v_i)}v_i\\
&=& \sum_{i=1}^{m+k}\left( n\forma{{\rm grad}\forma{\overrightarrow{H},\eta},\zeta(v_i)}+\forma{{\rm Ric}_M^\perp(\eta),\zeta(v_i)}+\forma{\tilde{\mathcal{B}}(\eta),\zeta(v_i)}+\right.\\
&& \left. n\forma{\overrightarrow{H},\n_\eta \zeta(v_i)}\right)v_i\\
&=& \Gamma \left( n\hspace{0,04cm} {\rm grad}\forma{\overrightarrow{H},\eta}+{\rm Ric}_M^\perp\eta +\tilde{\mathcal{B}}(\eta)\right)+n\sum_{i=1}^{m+k}\left(\forma{\overrightarrow{H},\n_\eta \zeta(v_i)}\right)v_i.
\end{eqnarray*}
\end{small}
\end{proof}

\begin{corol}\label{Deltamin}
Let $M$ be an $n-$dimensional manifold immersed in an $m-$\-di\-men\-sio\-nal homogeneous space $\mathbb{G}/\mathbb{K}$ and let $\eta$ be a parallel unit normal vector of $M$ such that $\overrightarrow{H}= \norma{\overrightarrow{H}}\eta.$ Then the Gauss map $\gamma_\eta$ associated to $\eta$ satisfies
\begin{eqnarray*}
\Delta\gamma_\eta=-\Gamma\left(n\hspace{0,1cm} {\rm grad}\norma{\overrightarrow{H}}+\tilde{\mathcal{B}}(\eta)+{\rm Ric}_M^\perp (\eta)\right).
\end{eqnarray*}  
In particular, if $M$ is an orientable hypersurface of $N$ then 
\begin{eqnarray}\label{hyperL}
-\Delta\gamma_\eta=n \Gamma({\rm grad}(H))+(\norma{S_\eta}^2+{\rm Ric} (\eta))\gamma_\eta.
\end{eqnarray}  
where ${\rm Ric}$ is the Ricci curvature of $N.$
\end{corol}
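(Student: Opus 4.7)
The plan is to deduce Corollary~\ref{Deltamin} as a direct specialization of the preceding theorem, which gives
\[
-\Delta\gamma_\eta = \Gamma\!\left(n\,{\rm grad}\forma{\overrightarrow{H},\eta}+\tilde{\mathcal{B}}(\eta)+{\rm Ric}_M^\perp(\eta)\right)+n\sum_{i=1}^{m+k}\forma{\overrightarrow{H},\n_{\eta}\zeta(v_i)}v_i,
\]
under the extra hypothesis $\overrightarrow{H}=\norma{\overrightarrow{H}}\eta$. The first term simplifies immediately, since $\forma{\overrightarrow{H},\eta}=\norma{\overrightarrow{H}}$, so ${\rm grad}\forma{\overrightarrow{H},\eta}={\rm grad}\norma{\overrightarrow{H}}$.

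The crucial step is to kill the summation. Using $\overrightarrow{H}=\norma{\overrightarrow{H}}\eta$ we get
\[
\forma{\overrightarrow{H},\n_{\eta}\zeta(v_i)}=\norma{\overrightarrow{H}}\forma{\eta,\n_{\eta}\zeta(v_i)}.
\]
Since each $\zeta(v_i)$ is a Killing vector field of $N$, the endomorphism $X\mapsto\n_X\zeta(v_i)$ is skew-adjoint, hence $\forma{\n_{\eta}\zeta(v_i),\eta}=0$ for every $i$. Therefore the whole sum vanishes and one obtains the first identity
\[
\Delta\gamma_\eta=-\Gamma\!\left(n\,{\rm grad}\norma{\overrightarrow{H}}+\tilde{\mathcal{B}}(\eta)+{\rm Ric}_M^\perp(\eta)\right).
\]

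For the hypersurface case, orientability provides a global unit normal $\eta$, automatically parallel in the (one-dimensional) normal connection, and $\overrightarrow{H}=H\eta$ with $H$ the mean curvature, so ${\rm grad}\norma{\overrightarrow{H}}={\rm grad}(H)$ up to the sign convention already fixed. Because the normal bundle has rank one, both $\tilde{\mathcal{B}}$ and ${\rm Ric}_M^\perp$ act on $\eta$ as multiplication by scalars, and those scalars can be read off by pairing with $\eta$:
\[
\tilde{\mathcal{B}}(\eta)=\forma{\tilde{\mathcal{B}}(\eta),\eta}\eta=\forma{S_\eta,S_\eta}\eta=\norma{S_\eta}^2\eta,
\]
and, using \eqref{RicciConstantspaces}-type reasoning applied through the definition of ${\rm Ric}_M^\perp$, one has ${\rm Ric}_M^\perp(\eta)=\forma{{\rm Ric}_M^\perp(\eta),\eta}\eta={\rm Ric}(\eta)\eta$, where ${\rm Ric}(\eta)$ denotes the Ricci curvature of $N$ in the direction $\eta$. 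Since $\Gamma_p$ is linear on each fibre and $\Gamma(\eta)=\gamma_\eta$, the scalar factors pull outside $\Gamma$, yielding
\[
-\Delta\gamma_\eta=n\,\Gamma({\rm grad}(H))+\left(\norma{S_\eta}^2+{\rm Ric}(\eta)\right)\gamma_\eta.
\]

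The only subtle point is the vanishing of the Killing sum; the rest is bookkeeping with the rank-one structure of the normal bundle. I do not anticipate a serious obstacle, provided one is careful that the skew-adjointness of $\n\zeta(v_i)$ is the full strength of the Killing property that is needed (as opposed to any invariance of the ambient metric that might be required earlier in the theorem).
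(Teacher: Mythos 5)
Your proposal is correct and follows the route the paper intends: the corollary is a direct specialization of the preceding theorem, and the paper offers no separate proof. The one step that needs an argument --- the vanishing of the sum $n\sum_i\forma{\overrightarrow{H},\n_{\eta}\zeta(v_i)}v_i$ via $\overrightarrow{H}=\norma{\overrightarrow{H}}\eta$ and the skew-adjointness of $X\mapsto\n_X\zeta(v_i)$ for a Killing field --- is exactly the right one, and your identification of $\tilde{\mathcal{B}}(\eta)=\norma{S_\eta}^2\eta$ and ${\rm Ric}_M^\perp(\eta)={\rm Ric}(\eta)\eta$ in the rank-one case is consistent with the paper's Corollary~\ref{paral1} and equation~\eqref{RicciConstantspaces}.
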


Considering the extension $\tilde{\mathcal{B}}+{\rm Ric}_M^\perp $  of the Simons operator to homogeneous spaces,  we  obtain as a direct consequence of Corollary \ref{Deltamin} the following extension, to homogenous spaces,  in the minimal case, of Theorem~\ref{trn}: 

\begin{thm} \label{Homogs}
Let $M$ be an $n-$dimensional manifold immersed  in a $m-$\-di\-men\-sio\-nal homogeneous space $\mathbb{G}/\mathbb{K}.$ Assume that $M$ is minimal and let $\eta$ be a parallel unit normal vector of $M.$   Then, The Gauss map $\gamma_{\eta} $ is harmonic if and only if $\eta$ is an eigenvector of the operator $\tilde{\mathcal{B}}+{\rm Ric}_M^\perp .$ 
Moreover, if $\mathbb{G}/\mathbb{K}$ is a simply connected complete space form $\mathbb{Q}_c^{n}$  with constant sectional curvature $c\in \{-1,0, 1\}.$ We have the equivalences:
\begin{enumerate}
\item $\gamma_{\eta}$ is harmonic.
\item $\eta$ is a harmonic section.
\item $\eta$ is an eigenvector of the operator $\tilde{\mathcal{B}}+{\rm Ric}_M^\perp.$
\end{enumerate}
\end{thm}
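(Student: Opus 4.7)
My plan is to deduce both the general homogeneous statement and the space-form specialization directly from Corollary~\ref{Deltamin}, combined with the standard characterization of harmonic maps into a sphere. First, since $M$ is minimal, setting $\overrightarrow{H}=0$ in Corollary~\ref{Deltamin} (which in turn rests on Corollary~\ref{corol2} after summing $\Delta_M\forma{\eta,\zeta(v_i)}v_i$ over a fixed orthonormal basis $\{v_1,\dots,v_{m+k}\}$ of $\mathfrak{g}$) collapses the Laplacian formula to
\begin{equation*}
\Delta \gamma_\eta \;=\; -\,\Gamma\!\left(\tilde{\mathcal{B}}(\eta) + {\rm Ric}_M^\perp(\eta)\right).
\end{equation*}

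Next I would invoke the classical fact that a map $f:M\to \mathbb{S}^{N-1}\subset \mathbb{R}^N$ is harmonic if and only if its Euclidean Laplacian is pointwise proportional to $f$, i.e.\ $\Delta f = -\lambda\, f$ for some scalar function $\lambda$ on $M$. Applied to $\gamma_\eta=\Gamma(\eta)$, this translates the harmonicity of $\gamma_\eta$ into the equation
\begin{equation*}
\Gamma\!\left(\tilde{\mathcal{B}}(\eta)+{\rm Ric}_M^\perp(\eta)\right) \;=\; \lambda\,\Gamma(\eta)
\end{equation*}
for some function $\lambda$. Because $\Gamma_p=(dR_{x^{-1}})_x\circ l_x^{-1}$ is fibrewise a linear isometry of the horizontal subspace, and hence injective, this is in turn equivalent to
\begin{equation*}
\bigl(\tilde{\mathcal{B}} + {\rm Ric}_M^\perp\bigr)(\eta)(p) \;=\; \lambda(p)\,\eta(p),\qquad p\in M,
\end{equation*}
that is, to $\eta$ being an eigenvector of the bundle endomorphism $\tilde{\mathcal{B}}+{\rm Ric}_M^\perp$. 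This settles the first (general) assertion.

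For the space-form case I would specialize to $\mathbb{G}/\mathbb{K}=\mathbb{Q}_c^n$ and use formula \eqref{RicciConstantspaces}, which gives ${\rm Ric}_M^\perp(\eta)=cn\,\eta$; thus $\eta$ is an eigenvector of $\tilde{\mathcal{B}}+{\rm Ric}_M^\perp$ if and only if it is an eigenvector of $\tilde{\mathcal{B}}$ alone. Combining this observation with the equivalence already proved yields $(1)\Leftrightarrow(3)$ in the space-form setting, while $(2)\Leftrightarrow(3)$ is exactly Proposition~\ref{eighar} applied to the parallel unit section $\eta$. No step presents a genuine obstacle; the only point demanding a little care is the passage from the proportionality of two horizontal vectors in $\mathfrak{g}$ back to an eigenvalue equation for normal vectors of $M$, and this is immediate from the injectivity of $l_x$ on horizontal spaces underlying the definition of $\Gamma$.
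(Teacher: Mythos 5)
Your proposal is correct and follows essentially the same route as the paper, which presents this theorem as a direct consequence of Corollary~\ref{Deltamin} (with $\overrightarrow{H}=0$), the pointwise characterization $\Delta\gamma_\eta=-f\gamma_\eta$ of harmonic maps into a sphere, the fibrewise injectivity of $\Gamma$, and, for the space-form case, formula \eqref{RicciConstantspaces} together with Proposition~\ref{eighar}. The details you supply are exactly the ones the paper leaves implicit.
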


The round sphere $\mathbb{S}^n$ can be represented as a quotient $\mathbb G /\mathbb K$ of Lie subgroups $\mathbb G$ and $\mathbb K$ of $O(n+1)$ in many ways, depending on the dimension $n$ (for example, $\mathbb S^n= O(n+1)/O(n)$ for any $n$,  $\mathbb S^n= U(n+1)/U(n)$ for  $n$ odd etc), and the metric of $\mathbb S^n$ is the Riemannian projection of a bi-invariant metric in $\mathbb G.$ In any such representation we can consider the Gauss map as in Definition~\ref{Gauss}.

In the next result  we prove the existence of harmonic unit normal sections on isoparametric submanifolds of the sphere. These sections determine harmonic  Gauss maps, as defined in \ref{Gauss},  for each Lie group quotient representation of $\mathbb S^n$ as described above. 

We prove that if the isoparametric submanifold is also minimal then the associated Gauss maps are eigenmaps of the Laplacian. This result  is an extension of the Theorem~\ref{isorn} and a spherical version of the main result of \cite{SMB}.

 We note that any isoparametric submanifold of $\mathbb{S}^m$ is a leaf of a singular  foliation 
of $\mathbb{S}^m$ by isoparametric submanifolds and any of these foliations contains a leaf which is regular and minimal (see \cite{PT}, pp 138-139).
We have:

\begin{thm}\label{sphere}
Let $M$ be an isoparametric submanifold of $\mathbb{S}^{m}.$ Then:
\begin{enumerate}
\item~\label{i} There exists an orthonormal basis of parallel unit normal sections $\eta_1, \cdots, \eta_r,$ $r=m-dim(M),$ which are eigenvectors of $\tilde{\mathcal{B}}+{\rm Ric}_M^\perp $ with constant eigenvalues. Moreover, the sections $\eta_1, \cdots, \eta_r,$ are harmonic unit normal sections of $M.$

\item If $M$ is a minimal isoparametric submanifold then the Gauss maps 
$$\gamma_{\eta_j} : M \to \mathbb{S}^{m+k-1},$$
 $1 \leq j \leq r$ associated to any
homogeneous representation $\mathbb{G}/\mathbb{K}$ of $\mathbb{S}^m$ are eigenmaps, where $\mathbb{S}^{m+k-1}$ is the
unit sphere of the Lie algebra $\mathfrak{g}$ of $\mathbb{G}.$ 
\end{enumerate}
\end{thm}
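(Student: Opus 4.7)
The plan is to model the argument on the proof of Theorem~\ref{isorn}, replacing the Euclidean vanishing of ${\rm Ric}_M^\perp$ with the spherical identity ${\rm Ric}_M^\perp(\eta)=n\,\eta$ coming from equation~\eqref{RicciConstantspaces}, and then passing to the Gauss-map statement via Corollary~\ref{Deltamin}.

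For item~\eqref{i}, I would fix a point $p\in M$ and choose an orthonormal basis $\{\eta_1(p),\ldots,\eta_r(p)\}$ of $T_pM^\perp$ that diagonalizes $\tilde{\mathcal{B}}$ at $p$. Because $M$ is isoparametric, its normal bundle is (globally) flat, so $\eta_1(p),\ldots,\eta_r(p)$ extend to parallel unit normal sections $\eta_1,\ldots,\eta_r$ defined on all of $M$. The defining property of an isoparametric submanifold then forces the principal curvatures of each $S_{\eta_l}$ to be constant along $M$, hence each Hilbert--Schmidt inner product $\forma{S_{\eta_l},S_{\eta_k}}=\forma{\tilde{\mathcal{B}}(\eta_l),\eta_k}$ is a constant function on $M$. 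Since these constants agree at $p$ with the eigenvalue data of $\tilde{\mathcal{B}}|_{T_pM^\perp}$, the $\eta_j$ are eigenvectors of $\tilde{\mathcal{B}}$ globally, with constant eigenvalues $\mu_j:=\norma{S_{\eta_j(p)}}^2$. Combining with ${\rm Ric}_M^\perp(\eta_j)=n\,\eta_j$ (equation~\eqref{RicciConstantspaces} applied to $\mathbb{S}^m$, $c=1$), the sections $\eta_1,\ldots,\eta_r$ are eigenvectors of $\tilde{\mathcal{B}}+{\rm Ric}_M^\perp$ with constant eigenvalues $\lambda_j:=\mu_j+n$. The harmonicity of the $\eta_j$ as unit normal sections is then immediate from Proposition~\ref{eighar}.

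For item~(2), assume in addition that $M$ is minimal and fix any representation $\mathbb{S}^m\cong\mathbb{G}/\mathbb{K}$ as in Section~\ref{hs}. Since $\overrightarrow{H}=0$, the hypothesis $\overrightarrow{H}=\norma{\overrightarrow{H}}\eta_j$ of Corollary~\ref{Deltamin} is trivially satisfied, and the gradient term in that corollary vanishes. Substituting the eigenvalue information obtained above gives
\begin{equation*}
-\Delta\gamma_{\eta_j}=\Gamma\!\left(\tilde{\mathcal{B}}(\eta_j)+{\rm Ric}_M^\perp(\eta_j)\right)=\lambda_j\,\Gamma(\eta_j)=\lambda_j\,\gamma_{\eta_j},
\end{equation*}
so each $\gamma_{\eta_j}:M\to\mathbb{S}^{m+k-1}$ is an eigenmap of the Laplacian with constant eigenvalue $\lambda_j=\norma{S_{\eta_j}}^2+n$.

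The only real care needed is to verify that the parallel extension step of item~\eqref{i} actually preserves the eigenvector property globally, i.e.\ that the Hilbert--Schmidt pairings $\forma{S_{\eta_l},S_{\eta_k}}$ are forced to be constant by the definition of isoparametricity (and not merely at $p$). This is the same technical point handled in the proof of Theorem~\ref{isorn} and follows because each $\eta_l$ is parallel with constant principal curvatures, so the diagonalizing frame for $S_{\eta_l}$ can be chosen smoothly and the entries of the Gram matrix become polynomial expressions in constants; everything else is a bookkeeping application of results already established in Sections~\ref{P}--\ref{AGM}.
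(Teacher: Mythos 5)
Your proposal is correct and follows essentially the same route as the paper: item \eqref{i} is the argument of Theorem~\ref{isorn} (fix $p$, diagonalize $\tilde{\mathcal{B}}$ at $p$, extend by parallelism using the globally flat normal bundle, use constancy of the principal curvatures to make $\forma{S_{\eta_l},S_{\eta_k}}$ constant, and add ${\rm Ric}_M^\perp=n\,{\rm id}$ from equation~\eqref{RicciConstantspaces}), with harmonicity from Proposition~\ref{eighar}; item (2) is the direct computation via Corollary~\ref{Deltamin} with $\overrightarrow{H}=0$. The paper states this only as ``analogous to Theorem~\ref{isorn}'' plus ``a direct consequence,'' so your write-up simply makes the same argument explicit.
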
 

\begin{proof}{\ }
\begin{enumerate}
\item The proof is analogous to the one of Theorem~\ref{isorn} since the isoparametric submanifolds of $\mathbb{S}^{n}$ have globally flat normal bundle. 
\item  It is a direct consequence of Theorem \ref{sphere} since  from the Equation  \eqref{RicciConstantspaces} we have ${\rm Ric}_M^\perp(\eta)=\dim(M)\eta.$
\end{enumerate}
\end{proof}

We can use polar actions (see Section \ref{HUNS}) and Theorem \ref{Polar} to extend Theorem~\ref{sphere} to more general ambient spaces. For example, the  left action
$$\mathbb{K} \times \mathbb{G}/\mathbb{K}$$
of $\mathbb{K}$ on a symmetric space $N= \mathbb{G}/\mathbb{K},$ $k (g\mathbb{K}) = (kg)\mathbb{K}$ (see \cite{AB}). 

In the following theorem, assuming that the Ricci normal operator ${\rm Ric}_M^\perp$ is a multiple of the identity, we obtain harmonic Gauss maps on principal orbits of polar actions.  We note that the normal Ricci curvature ${\rm Ric}_M^\perp$ of submanifold of a space form is always  a multiple of the identity.  If $M$ is a codimension $2$ manifold of an Einstein manifold $N$ then the Ricci normal operator  is given by
$${\rm Ric}_M^\perp(\eta)=({\rm Ric}(\eta) - K(\eta,\eta^\perp))\eta$$
and then is also a multiple of the identity, where  $\eta^\perp$ is a  normal vector to $M$ and orthogonal to $\eta,$ $ K(\eta,\eta^\perp)$  the sectional curvature on $N$ and  ${\rm Ric}(\eta)$ is the Ricci curvature of $N.$  We  mention \cite{HL} where it is classified isometric compact group actions in $\mathbb R^m$ and $\mathbb S^m$ whose principal orbits have codimension $2$.

\begin{thm}
If a principal orbit $M$ of a polar action on a symmetric space $N$ is minimal and the  Ricci normal operator of $M$ is a multiple of the identity then, the associated Gauss maps $\gamma_{\eta_1},\ \cdots, \gamma_{\eta_k}$ of an eigenbasis $\eta_1, \cdots, \eta_k$  of  $\tilde{\mathcal{B}},$ as in  Theorem~\ref{Polar},   are harmonic maps. 
\end{thm}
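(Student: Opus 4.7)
The plan is to string together three results already established in the paper: Theorem~\ref{Polar} applied to the polar action on $N$, Theorem~\ref{Homogs} which characterizes harmonicity of Gauss maps of minimal submanifolds in homogeneous spaces, and the pointwise hypothesis on the normal Ricci operator.

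To begin with, since the symmetric space $N$ admits a presentation as a homogeneous quotient $\mathbb{G}/\mathbb{K}$ with $\mathbb{G}$ carrying a bi-invariant metric, the Gauss map $\gamma_\eta : M \to \mathbb{S}^{m+k-1} \subset \mathfrak{g}$ is defined for every unit normal section $\eta$ of $M$ as in Definition~\ref{Gauss}, and the framework of Corollary~\ref{Deltamin} and Theorem~\ref{Homogs} is at our disposal.

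The first substantive step is to apply Theorem~\ref{Polar} to the principal orbit $M$: it produces an orthonormal basis $\eta_1,\ldots,\eta_k$ of $\mathcal{N}_1(M)$ consisting of parallel unit normal sections which diagonalize the Simons operator $\tilde{\mathcal{B}}$ with constant eigenvalues. The second step uses the hypothesis that $\mathrm{Ric}_M^\perp$ is a pointwise multiple of the identity on the normal bundle: every normal vector, and in particular each $\eta_i$, is then automatically an eigenvector of $\mathrm{Ric}_M^\perp$. Adding the two operators, each $\eta_i$ is an eigenvector of the combined operator $\tilde{\mathcal{B}} + \mathrm{Ric}_M^\perp$. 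The third and final step is to invoke the minimality of $M$ and apply Theorem~\ref{Homogs}, which then yields the harmonicity of each Gauss map $\gamma_{\eta_i}$.

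No genuine obstacle arises in this argument; the proof is essentially a bookkeeping exercise combining three earlier statements. The Ricci hypothesis plays exactly the role of converting the eigenbasis of $\tilde{\mathcal{B}}$ delivered by Theorem~\ref{Polar} into an eigenbasis of the combined operator $\tilde{\mathcal{B}} + \mathrm{Ric}_M^\perp$ demanded by Theorem~\ref{Homogs}, and the symmetric space hypothesis on $N$ intervenes only to guarantee the homogeneous quotient structure that allows the Gauss map to be defined in the first place.
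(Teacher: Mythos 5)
Your proposal is correct and follows exactly the route the paper intends: the theorem is stated without a separate proof precisely because it is the direct concatenation of Theorem~\ref{Polar} (parallel eigenbasis of $\tilde{\mathcal{B}}$ on principal orbits), the hypothesis that ${\rm Ric}_M^\perp$ is a multiple of the identity (so the same basis diagonalizes $\tilde{\mathcal{B}}+{\rm Ric}_M^\perp$), and Theorem~\ref{Homogs} applied to the minimal orbit in the homogeneous presentation $\mathbb{G}/\mathbb{K}$ of the symmetric space.
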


We note that if an orbit of an action of a compact Lie group on a manifold $N$ is a local maximum for the volume then this orbit is a critical point of the area functional and hence is minimal (see \cite{HL}). Therefore, if $N$ is compact, there always exists an orbit which is minimal. 

\subsection{The sphere $\mathbb{S}^7$ with the octonionic multiplication}\label{s7}

In this section we apply the computations of  Section \ref{rl} to  constructions of \cite{SML}. It allows us to obtain a version of  Ruh-Vilms Theorem of hypersurfaces of  $\mathbb S^k$ with $3\leq k\leq 7,$ where the Gauss map depends of the octonionic structure of $\mathbb{R}^8.$  The results in this section extends Corollary 1.2 and Theorem 1.4 of  \cite{SML}.  We begin recalling some basic facts about octonionic geometry. 

Given $n\in\{0,1,2,\dots\},$ the Cayley-Dickson algebra
$\mathcal{C}_{n}$ is a division algebra structure on $\mathbb{R}^{2^{n}}$
defined inductively by $\mathcal{C}_{0}=\mathbb{R}$ and by the following
formulae: If $x=\left(  x_{1},x_{2}\right)  $, $y=\left(  y_{1},y_{2}\right)
$ are in $\mathbb{R}^{2^{n}}=\mathbb{R}^{2^{n-1}}\times\mathbb{R}^{2^{n-1}}$,
$n\geq1$, then
\begin{equation}
x\cdot y=\left(  x_{1}y_{1}-\overline{y_{2}}x_{2},y_{2}x_{1}+x_{2}%
\overline{y_{1}}\right)  , \label{xy}%
\end{equation}
where
\[
\overline{x}=\left(  \overline{x}_{1},-x_{2}\right)  ,
\]
with $\overline{x}=x$ if $x\in\mathbb{R}$ (see \cite{B}). 

 The Cayley-Dickson algebra $\mathcal{C}_3$ of $\mathbb R^8$ is called the octonions and denoted by $\mathbb{O}.$
 We next mention some well known
facts about the octonions which proofs can be found in \cite{B}. Let $1$ be the neutral element of $\mathbb{O}.$ Besides being
a division algebra, $\mathbb{O}$ is normed: $\Vert x\cdot y \Vert=\Vert x
\Vert\Vert y \Vert,$ for any $x,\ y\in\mathbb{O}$, where $\Vert\ \Vert$ is the
usual norm of $\mathbb{R}^{8},$ and $\Vert x \Vert=\sqrt{x\cdot\overline{x}}.$
Setting $\operatorname*{Re}(x)=\left(  x+\overline{x}\right)  /2$ we have
\[
T_{1}\mathbb{S}^{7}=\{x\in\mathbb{R}^{8}\ | \ \operatorname*{Re}(x)=0\}.
\]

The right and left translations $R_{x}, L_{x}:\mathbb{O}\rightarrow
\mathbb{O},$ $R_{x}(v)=v\cdot x,$ $L_{x}(v)=x\cdot v,$ $v\in\mathbb{O},$ are
orthogonal maps if $\Vert x \Vert=1$ and are skew-symmetric if
$\operatorname*{Re}(x)=0.$ In particular, the unit sphere $\mathbb{S}^{7}$ centered at the origin of $\mathbb R^8$ is
preserved by left and right translation of unit vectors and, moreover, any
$v\in T_{1}\mathbb{S}^{7}$ determines a Killing vector field $V$ of
$\mathbb{S}^{7}$ given by the left translation, $V(x)=x\cdot v$,
$x\in\mathbb{S}^{7}.$

Defining a translation on $\mathbb{S}^7$   by
\begin{align*}
\Gamma: T\mathbb{S}^{7}  &  \to T_{1}\mathbb{S}^{7}\\
(x,v)  &  \mapsto L_{x^{-1}}(v),
\end{align*} 
it follows immediatly from the properties of the octonions described above that $\Gamma$ is a linear transformation and the Killing vector field $V$ of $\mathbb{S}^7$ determined by $v\in T_1\mathbb{S}^7$ by left multiplication satisfies 
$$\forma{\Gamma(x,u),v}=\forma{u,V(x)},\ \ u\in T_xN, \ \ x\in \mathbb{S}^7.$$

Herein, we identify a $k-$dimensional unit sphere $\mathbb{S}^k,$ $k=3,\ 4 ,\ 5,\ 6,$ as a totally geodesic sphere of $\mathbb{S}^7.$  Then, we may define \textit{the octonionic Gauss map} of an oriented hypersurface $M$ of  $\mathbb{S}^k,$ with $3\leq k\leq 7.$ by 
\begin{eqnarray*}
\gamma_\eta: M &\to & \mathbb{S}^6\subset T_1\mathbb{S}^7\\
x &\mapsto & x^{-1}\cdot \eta,
\end{eqnarray*} 
where $\eta$ is a unit normal vector of $M$ in $\mathbb{S}^k.$

Regarding  $M$ as a submanifold of the sphere $\mathbb{S}^7$ it is easy to see that $\eta$ is a parallel unit normal vector $\eta$ and an eigenvector of $\tilde{\mathcal{B}}+{\rm Ric^\perp_M}.$ Therefore, as a consequence of Corollary~\ref{corol2} we have the following version of the Ruh-Vilms Theorem:
 
  \begin{thm}\label{RVSk}
 Let $M$ be an oriented immersed hypersurface of $\mathbb{S}^k,$  $3\leq k\leq 7.$  Then, the octonionic Gauss map of $M$ in $\mathbb{S}^k$ is harmonic if and only if $M$ has CMC.
 \end{thm}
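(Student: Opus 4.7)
The plan is to regard $M$ as a submanifold of $\mathbb{S}^7$ through the totally geodesic inclusion $\mathbb{S}^k\subset\mathbb{S}^7$ and then invoke the homogeneous-space machinery of Section~\ref{hs}, exploiting the Killing vector fields on $\mathbb{S}^7$ produced by left octonionic multiplication.

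First I would collect the relevant data about $M\subset\mathbb{S}^7$ that was alluded to just before the statement: the unit vector $\eta$ normal to $M$ in $\mathbb{S}^k$ is a parallel unit normal section of $M$ in $\mathbb{S}^7$; the shape operators of $M$ along directions in the normal bundle of $\mathbb{S}^k\subset\mathbb{S}^7$ vanish by total geodesy, so $\tilde{\mathcal{B}}(\eta)=\norma{S_\eta}^2\eta$; by \eqref{RicciConstantspaces} one has ${\rm Ric}_M^\perp(\eta)=n\,\eta$ with $n=\dim M$; and the mean curvature vector of $M$ in $\mathbb{S}^7$ equals $\vec{H}=H\eta$, where $H$ is the mean curvature of $M$ in $\mathbb{S}^k$.

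Next I would fix an orthonormal basis $\{v_1,\dots,v_7\}$ of $T_1\mathbb{S}^7$ with associated Killing fields $V_i(x)=x\cdot v_i$ on $\mathbb{S}^7$, write $\gamma_\eta=\sum_i\forma{\eta,V_i}\,v_i$, and apply Corollary~\ref{corol2} to each function $\forma{\eta,V_i}$ to obtain
\[
-\Delta\gamma_\eta \;=\; \Gamma\!\bigl(n\,{\rm grad}\forma{\vec H,\eta}+\tilde{\mathcal{B}}(\eta)+{\rm Ric}_M^\perp(\eta)\bigr)\;+\;n\sum_{i=1}^{7}\forma{\vec H,\n_\eta V_i}\,v_i.
\]
Since each $V_i$ is Killing, $\n V_i$ is skew-symmetric, so $\forma{\n_\eta V_i,\eta}=0$; combined with $\vec H=H\eta$, the last summation vanishes identically. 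Substituting the eigenvalue data from the first step yields
\[
-\Delta\gamma_\eta \;=\; n\,\Gamma({\rm grad}\,H)\,+\,(\norma{S_\eta}^2+n)\,\gamma_\eta.
\]

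Finally, a map into a sphere is harmonic if and only if its ambient Laplacian is pointwise proportional to the map. Because $\Gamma_x$ is a linear isometry, ${\rm grad}\,H$ is tangent to $M$, and $\eta$ is normal to $M$ in $\mathbb{S}^7$, the vectors $\Gamma({\rm grad}\,H)$ and $\gamma_\eta=\Gamma(\eta)$ are orthogonal in $T_1\mathbb{S}^7$ at every point; hence $\gamma_\eta$ is harmonic if and only if ${\rm grad}\,H\equiv 0$, i.e., $M$ has constant mean curvature in $\mathbb{S}^k$. The only non-routine step is the cancellation of the cross term $\sum_i\forma{\vec H,\n_\eta V_i}\,v_i$, which is where the Killing property of the left-multiplication vector fields and the alignment $\vec H\parallel\eta$ (afforded by total geodesy of $\mathbb{S}^k\subset\mathbb{S}^7$) are both crucially used.
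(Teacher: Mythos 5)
Your proposal is correct and follows essentially the same route as the paper: view $M$ inside the totally geodesic $\mathbb{S}^k\subset\mathbb{S}^7$, check that $\eta$ is parallel in the normal connection of $M$ in $\mathbb{S}^7$ and is an eigenvector of $\tilde{\mathcal{B}}+{\rm Ric}_M^\perp$ with eigenvalue $\norma{S_\eta}^2+(k-1)$, and then apply Corollary~\ref{corol2} to get $-\Delta\gamma_\eta=(k-1)\Gamma({\rm grad}\,H)+(\norma{S_\eta}^2+(k-1))\gamma_\eta$, from which harmonicity is equivalent to $H$ constant. Your explicit justification of the vanishing of the cross term $\sum_i\forma{\vec H,\n_\eta V_i}v_i$ via skew-symmetry of $\n V_i$ and $\vec H\parallel\eta$ is exactly what underlies the paper's passage from the general Laplacian formula to Corollary~\ref{Deltamin}, which the paper uses implicitly.
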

 \begin{proof}
 Let $M$ be an oriented immersed hypersurface of $\mathbb{S}^k.$  If $\eta$ is a unit normal vector of $M$ in $\mathbb{S}^k$ then $\eta$ is a parallel vector in the normal bundle  of $M$ in $\mathbb{S}^7$ and 
 $$\left(\tilde{\mathcal{B}}+{\rm Ric^\perp_M}\right)(\eta)=\left(\norma{B}^2+(k-1)\right)\eta, $$
where $B$ is the second fundamental form of $M$ in $\mathbb{S}^k.$ Then, from Co\-rollary~\ref{corol2} we have 
\begin{eqnarray}\label{lapoc}
-\Delta\gamma_\eta=(k-1)\Gamma({\rm grad}(H))+\left(\norma{B}^2+(k-1)\right)\gamma_\eta,
\end{eqnarray}
where  $H$ is the mean curvature of $M$ in $\mathbb{S}^k.$ Therefore, $\gamma_\eta$ is a harmonic map if and only if $H$ is constant. 

 \end{proof}
 
A Gauss map of an orientable hypersurface $M$ of $\mathbb{S}^{n},$
$n\geq3,$ that have been often studied in the literature associates, to each
$x\in M,$ the vector $\eta(x)\in\mathbb{S}^{n},$ where $\eta$ is a unit normal
vector field along $M.$ E. De Giorgi \cite{DG} and J. Simons \cite{JS} proved that if $M$ is compact, has constant mean curvature and the image of the Gauss map is
contained in an \emph{open} hemisphere of $\mathbb{S}^{n}$ then $M$ must be a
totally geodesic hypersphere of $\mathbb{S}^{n}.$ Using the octonionic Gauss
map we obtain:

\begin{thm}
Let $M$ be a compact and oriented hypersurface  of $\mathbb{S}^k,$ $3\leq k\leq 7.$  If $M$ has CMC then the image of the octonionic Gauss map of $M$ is not contained in an open hemisphere of $\mathbb{S}^6\subset T_1\mathbb{S}^7.$
\end{thm}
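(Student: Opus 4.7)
The plan is to proceed by contradiction. Suppose the octonionic Gauss map $\gamma_\eta: M \to \mathbb{S}^6 \subset T_1\mathbb{S}^7$ sends $M$ into the open hemisphere determined by some unit vector $v \in T_1\mathbb{S}^7$, i.e.\ $f(x) := \langle \gamma_\eta(x), v \rangle > 0$ for every $x \in M$. The target is then to derive a contradiction from the Laplacian formula \eqref{lapoc} combined with the constancy of $H$ and the compactness of $M$.

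Since $M$ is CMC, ${\rm grad}(H) \equiv 0$, so \eqref{lapoc} collapses to
\begin{equation*}
-\Delta \gamma_\eta = \left(\norma{B}^2 + (k-1)\right) \gamma_\eta.
\end{equation*}
Taking the inner product against the fixed vector $v \in T_1\mathbb{S}^7$ (which commutes with $\Delta$ because $v$ is constant), I obtain
\begin{equation*}
-\Delta f = \left(\norma{B}^2 + (k-1)\right) f.
\end{equation*}
Now I integrate this identity over the compact manifold $M$: the left-hand side vanishes by the divergence theorem, while the right-hand side is the integral of a strictly positive function, since $\norma{B}^2 + (k-1) \geq k - 1 \geq 2 > 0$ for $3 \leq k \leq 7$ and $f > 0$ by assumption. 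This gives $0 = \int_M (\norma{B}^2 + (k-1)) f > 0$, a contradiction.

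There is essentially no hard step here; the whole argument is a one-line consequence of the closed-form Laplacian \eqref{lapoc} derived in the proof of Theorem~\ref{RVSk}, together with the maximum principle (in integrated form) on a closed manifold. The only point that deserves care is checking that the Laplacian of the scalar function $f$ really coincides with the component $\langle \Delta \gamma_\eta, v\rangle$ of the tension field; this is immediate because $v$ is a fixed vector in the ambient Euclidean space $T_1\mathbb{S}^7 \subset \mathbb{R}^8$ in which $\gamma_\eta$ is viewed as a vector-valued map, so componentwise differentiation commutes with $\Delta$. The conclusion is that for a compact, oriented CMC hypersurface of $\mathbb{S}^k$ with $3 \leq k \leq 7$, the octonionic Gauss image must meet the complement of every open hemisphere of $\mathbb{S}^6$, as claimed.
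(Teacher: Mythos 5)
Your argument is correct, and it reaches the contradiction by a shorter route than the paper. Both proofs start from the same identity \eqref{lapoc}, which for constant $H$ reduces to $-\Delta\gamma_\eta=(\norma{B}^2+(k-1))\gamma_\eta$; the difference lies in how the positivity of $f=\forma{\gamma_\eta,v}$ is exploited. The paper first enlarges the single pole $v$ to seven linearly independent poles $v_1,\dots,v_7$ (via a compactness/$\varepsilon$-neighborhood argument), applies the strong maximum principle to each superharmonic, sign-definite component $\forma{\gamma_\eta,v_i}$ to conclude that all seven are constant, deduces that $\gamma_\eta$ itself is constant, and only then uses the identity again to force $\gamma_\eta\equiv 0$, contradicting $\norma{\gamma_\eta}=1$. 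You instead integrate the scalar identity $-\Delta f=(\norma{B}^2+(k-1))f$ over the closed manifold $M$: the left side vanishes by the divergence theorem while the right side is strictly positive, and the contradiction is immediate. Your version is more elementary — it needs neither the construction of the seven poles nor the pointwise maximum principle, only the integrated form — and it isolates the one genuinely essential point, namely that $\Delta$ commutes with pairing against the fixed ambient vector $v$ because $\Delta\gamma_\eta$ in \eqref{lapoc} is the componentwise Laplacian of the $T_1\mathbb{S}^7$-valued map. The paper's longer detour buys nothing extra for this particular statement, so your streamlining is a genuine improvement rather than an omission.
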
 

\begin{proof}
Assume that the image of $\gamma:=\gamma_\eta$ is contained in an open hemisphere of $\mathbb{S}^{6}$
having $v\in\mathbb{S}^{6}$ as pole. This means that $\left\langle
\gamma(x),v\right\rangle >0$ for all $x\in M.$ We claim that one may
find $7$ linearly independent vectors $v_{1},\cdots,v_{7}\in T_{1}\mathbb{S}^{7}$
such that $\left\langle \gamma(x),v_{i}\right\rangle >0,$ for all $x\in M$ and
$1\leq i\leq7.$ Indeed: 
Note that%
\[
\left\langle \gamma\left(  x\right)  ,v\right\rangle >0\Leftrightarrow
d\left(  \gamma\left(  x\right)  ,v\right)  <\frac{\pi}{2}%
\]
for all $x\in M,$ where $d$ is the distance in $\mathbb{S}^{6}.$ By
compactness there is $\varepsilon>0$ such that%
\[
d\left(  \gamma\left(  x\right)  ,v\right)  <\varepsilon<\frac{\pi}{2}%
\]
for all $x\in M.$ Let $U$ be a neighourhood of $v$ in $\mathbb{S}^{6}$ such
that
\[
d\left(  v,u\right)  <\frac{\pi}{2}-\varepsilon
\]
for all $u\in U.$ Then%
\[
d\left(  \gamma\left(  x\right)  ,u\right)  \leq d\left(  \gamma\left(
x\right)  ,v\right)  +d\left(  v,u\right)  <\frac{\pi}{2}%
\]
that is, the inequality
\[
\left\langle \gamma\left(  x\right)  ,u\right\rangle >0
\]
holds for all $x\in M$ and all $u\in U.$ Since $\mathbb{R}^{7}$ is the
smallest linear subspace of $\mathbb{R}^{7}$ containing $U$ the claim is proved.

From \eqref{lapoc} we obtain
\begin{eqnarray}\label{suph}
\Delta_M \left\langle \gamma,v_{i}\right\rangle=-(\norma{B}^2+(k-1))\forma{\gamma,v_i} <0. 
\end{eqnarray} 
 The functions $\left\langle \gamma,v_{i}\right\rangle $ being superharmonic and not changing sign must be constant. Hence $\gamma$ must be a constant map and then $\Delta\gamma=0.$ From \eqref{suph} $\gamma$ must be
zero, contradiction! This proves the theorem.
\end{proof}

\end{document}